\documentclass[11pt,reqno]{amsart}

\usepackage[T1]{fontenc}
\usepackage{lmodern}
\usepackage{microtype}
\usepackage{amsmath,amssymb,mathtools,mathrsfs}
\usepackage{booktabs}
\usepackage{array}
\usepackage{enumitem}
\usepackage{xcolor}
\usepackage[hidelinks]{hyperref}
\hypersetup{
  pdftitle={Orthogonal-polynomial models and numerical invariants of principal bidisc submodules and quotient modules},
  pdfauthor={Yufeng Lu and Chao Zu},
  pdfsubject={Principal submodules and quotient modules of the Hardy module over the bidisc},
  pdfkeywords={Hardy module, bidisc, orthogonal polynomials, Verblunsky coefficients, CMV matrices, core operator, quotient module}
}

\allowdisplaybreaks
\numberwithin{equation}{section}

\newtheorem{theorem}{Theorem}[section]
\newtheorem{proposition}[theorem]{Proposition}
\newtheorem{lemma}[theorem]{Lemma}
\newtheorem{corollary}[theorem]{Corollary}

\newtheorem{problem}[theorem]{Problem}
\theoremstyle{definition}

\newtheorem{example}[theorem]{Example}
\theoremstyle{remark}
\newtheorem{remark}[theorem]{Remark}

\newcommand{\D}{\mathbb D}
\newcommand{\T}{\mathbb T}
\newcommand{\C}{\mathbb C}
\newcommand{\N}{\mathbb N}
\newcommand{\Nzero}{\mathbb N_0}
\newcommand{\HH}{H^2(\D^2)}
\newcommand{\calM}{\mathcal M}
\newcommand{\calP}{\mathcal P}

\newcommand{\Ran}{\operatorname{Ran}}

\newcommand{\spec}{\operatorname{spec}}

\newcommand{\Span}{\operatorname{span}}

\newcommand{\HS}{\mathrm{HS}}
\newcommand{\Sr}{\mathcal S}
\newcommand{\dd}{\,\mathrm d}
\newcommand{\one}{\mathbf 1}
\newcommand{\eps}{\varepsilon}

\title[Verblunsky coefficients and bidisc numerical invariants]
{Verblunsky Coefficients, CMV Matrices, and Numerical Invariants of Homogeneous Principal Bidisc Submodules}

\author{Yufeng Lu}
\address{School of Mathematical Sciences, Dalian University of Technology, Dalian 116024, China}
\email{lyfdlut@dlut.edu.cn}

\author{Chao Zu}
\address{School of Mathematical Sciences, Dalian University of Technology, Dalian 116024, China}
\email{zuchao@dlut.edu.cn}

\subjclass[2020]{Primary 47A13, 47B35; Secondary 42C05, 46E22, 47B10}
\keywords{Hardy module over the bidisc, principal submodule, quotient module,
orthogonal polynomial, Verblunsky coefficient, CMV matrix, core operator,
compressed shift, Toeplitz determinant, Mahler measure}

\begin{document}

\begin{abstract}
Let $[p]$ be the principal submodule generated by a polynomial in
$H^2(\mathbb D^2)$.  For homogeneous $p$, the homogeneous slices of $[p]$
admit a weighted OPUC model in which the two wandering vectors are an
orthonormal polynomial and its reversal.  We show that the associated
Verblunsky coefficients determine the singular values of the
wandering-projection product and the restricted cross-commutator, as well as
the non-zero spectrum of the core operator.  Toeplitz-determinant and
Mahler-measure identities yield exact Fredholm determinants and Schatten
estimates, while $[(z-w)^N]$ rules out a uniform Hilbert--Schmidt bound.  The
same model gives explicit singular values of $[S_z^*,S_w]$ on the homogeneous
quotient $H^2(\mathbb D^2)\ominus[p]$; for $p=(z-w)^N$, its squared
Hilbert--Schmidt norm is asymptotic to $N$.  For arbitrary polynomial
generators, we construct a weighted bivariate model with a doubly Toeplitz,
block-banded moment matrix and prove
$C_p^2|_{\mathscr E_z}=\Gamma_p^*\Gamma_p$, relating the core spectrum to the
cross-Gram operator between the two edge spaces.  We also discuss
cyclic-factor obstructions, represent higher numerical invariants by
alternating CMV products, and give a quadratic counterexample to their
proposed monotonicity.
\end{abstract}

\maketitle

\section{Introduction}

The Hardy space $\HH$ is naturally a Hilbert module over $\C[z,w]$.
Its closed submodules are invariant under multiplication by both coordinate
functions, but, in contrast with the one-variable situation governed by
Beurling's theorem, they do not admit a general inner-function
classification.  The defect spaces of the two restricted coordinate shifts,
their cross-commutator, and the associated core operator therefore play an
important role in the structure theory of bidisc submodules; see
\cite{ChenGuo,GuoYang,Rudin,Yang1999,Yang2001,Yang2004,Yang2005,YangSurvey}.

Let $\calM\subset\HH$ be a submodule, let $R_z=M_z|_{\calM}$ and
$R_w=M_w|_{\calM}$, and put
\[
 P_z=I-R_zR_z^*=P_{\calM\ominus z\calM},\qquad
 P_w=I-R_wR_w^*=P_{\calM\ominus w\calM}.
\]
The cross-commutator and core operator are
\begin{align*}
 X_{\calM}&=[R_z^*,R_w],\\
 C_{\calM}&=I-R_zR_z^*-R_wR_w^*+R_zR_wR_z^*R_w^*.
\end{align*}
The submodule is called Hilbert--Schmidt when $C_{\calM}$ is
Hilbert--Schmidt.  In that case the two basic numerical invariants are
\[
 \Sigma_0(\calM)=\|P_zP_w\|_{\HS}^2,
 \qquad
 \Sigma_1(\calM)=\|X_{\calM}\|_{\HS}^2,
\]
and they satisfy $\Sigma_0-\Sigma_1=1$; see
\cite{AzariKeyLuYang,Yang1999,Yang2001,Yang2005}.

For a homogeneous polynomial generator, Azari Key, Lu and Yang
\cite{AzariKeyLuYang} constructed the wandering vectors by cofactors of
finite Toeplitz Gram matrices and expressed the core spectrum through those
determinants.  Our starting point is that the entire construction belongs
naturally to the classical theory of orthogonal polynomials on the unit
circle (OPUC).  If $p$ is homogeneous and $q(\zeta)=p(\zeta,1)$, then
multiplication by $p$ identifies the $n$th homogeneous slice of $[p]$ with
the polynomials of degree at most $n$ in
\[
 L^2(\mu_p),\qquad
 \dd\mu_p(\zeta)=\frac{|q(\zeta)|^2}{\|q\|_2^2}\,\dd m(\zeta).
\]
Under this unitary identification, the two slice-wise wandering vectors are
the orthonormal polynomial $\varphi_n$ and its reversed polynomial
$\varphi_n^*$.  Their relative position is measured exactly by the
Verblunsky coefficient $\alpha_{n-1}$.

The first main theorem, obtained in Sections~\ref{sec:model}--\ref{sec:operators},
is the following dictionary.  The Schatten class $\Sr_r$ has its usual
quasi-norm interpretation when $0<r<1$.

\begin{theorem}[OPUC dictionary]\label{thm:intro-dictionary}
Let $p\neq0$ be homogeneous, let $\calM=[p]$, and let $(\alpha_n)_{n\geq0}$
be the Verblunsky coefficients of $\mu_p$.  Then:
\begin{enumerate}[label=\textup{(\roman*)}]
\item the non-zero singular values of $P_zP_w$ are
      $1,|\alpha_0|,|\alpha_1|,\ldots$;
\item the non-zero singular values of $X_{\calM}$ are
      $|\alpha_0|,|\alpha_1|,\ldots$;
\item the non-zero eigenvalues of $C_{\calM}$ are
      $1$ and $\pm|\alpha_n|$, $n\geq0$;
\item for every $r>0$,
\[
 C_{\calM}\in\Sr_r\quad\Longleftrightarrow\quad
 (\alpha_n)\in\ell^r,
 \qquad
 \|C_{\calM}\|_{\Sr_r}^r=1+2\sum_{n\geq0}|\alpha_n|^r.
\]
\end{enumerate}
In particular, every homogeneous principal polynomial submodule is
Hilbert--Schmidt and
\[
 \Sigma_0([p])=1+\sum_{n\geq0}|\alpha_n|^2,
 \qquad
 \Sigma_1([p])=\sum_{n\geq0}|\alpha_n|^2.
\]
\end{theorem}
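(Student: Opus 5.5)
The plan is to reduce everything to the homogeneous grading. Since $p$ is homogeneous of degree $d$, the submodule $\calM=[p]$ splits orthogonally as $\calM=\bigoplus_{n\ge0}\calM_n$ with $\calM_n=p\cdot\mathcal H_n$, where $\mathcal H_n$ denotes the homogeneous polynomials of degree $n$. The operators $R_z$ and $R_w$ map $\calM_n$ into $\calM_{n+1}$. Hence $P_z$, $P_w$, $C_{\calM}$ and $X_{\calM}$ all commute with the grading (note that $R_z^*R_w$ preserves degree). Each operator is therefore a direct sum of finite-dimensional blocks, and it suffices to compute the singular values and eigenvalues block by block.

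Next, identify $\calM_n$ with $\calP_n$, the polynomials of degree $\le n$ in $L^2(\mu_p)$, via $p(z,w)f(z,w)\mapsto f(\zeta,1)$. Because $p$ is homogeneous and $H^2(\D^2)$ restricted to homogeneous polynomials is weighted by the circle variable, this is unitary up to the normalization $\|q\|_2$. Under this map:
\begin{itemize}
\item $z\calM_{n-1}$ becomes $\zeta\calP_{n-1}$;
\item $w\calM_{n-1}$ becomes $\calP_{n-1}$;
\item $\calM_n\ominus w\calM_{n-1}$ is spanned by $\varphi_n$;
\item $\calM_n\ominus z\calM_{n-1}$ is spanned by $\varphi_n^*$.
\end{itemize}
For $n=0$ both wandering spaces are the constants. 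Thus on slice $n$ we have $P_w=\varphi_n\otimes\varphi_n$ and $P_z=\varphi_n^*\otimes\varphi_n^*$. The product $P_zP_w$ is then rank one with singular value $|\langle\varphi_n,\varphi_n^*\rangle_{\mu_p}|$. The Szegő recursion gives $\varphi_n^*(0)=\|\Phi_n\|^{-1}$ and $\langle \varphi_n,\varphi_n^*\rangle=\overline{\varphi_n(0)}\cdot\|\Phi_n\|^{-1}\cdots$, which evaluates to $-\overline{\alpha_{n-1}}$ up to conjugation (using $\Phi_n(0)=-\overline{\alpha_{n-1}}$ and $\langle\Phi_n,\Phi_n^*\rangle=\langle \Phi_n,\|\Phi_n\|^2\rangle$-type identities). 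Slice $0$ contributes $1$. This proves (i).

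For (ii) and (iii), I will use algebraic identities relating $X$ and $C$ to the wandering projections. Standard computations (Yang) give $C_{\calM}=P_z-R_wP_zR_w^*$ and $X_{\calM}^*X_{\calM}=R_w^*P_zR_w - P_zP_wP_z$-type relations. More concretely, the route I would take is $X^*X=R_w^*R_zR_z^*R_w - R_w^*R_w\,$… Since this gets messy, I would instead use the established facts $\|X\|_{\HS}^2=\|P_zP_w\|_{\HS}^2-1$ and the known identity that $C$ has the form $P_z - R_wP_zR_w^*$, a difference of two orthogonal projections of rank one on each slice. The first of these, on $\calM_n$, is the projection onto $\varphi_n^*$. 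The second is the projection onto $w\,\varphi_{n-1}^*$, i.e.\ $\varphi_{n-1}^*$ viewed in $\calP_n$; since $R_w$ is an isometry, this is again rank one. The difference of two rank-one projections onto unit vectors $u$ and $v$ has eigenvalues $\pm\sqrt{1-|\langle u,v\rangle|^2}$. Szegő's recursion $\varphi_n^*=\rho_{n-1}^{-1}(\varphi_{n-1}^*-\alpha_{n-1}\zeta\varphi_{n-1})$ gives $|\langle\varphi_n^*,\varphi_{n-1}^*\rangle|=\rho_{n-1}=\sqrt{1-|\alpha_{n-1}|^2}$, so the eigenvalues are $\pm|\alpha_{n-1}|$; slice $0$ gives $1$. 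For $X$, on each slice $X=R_z^*R_w-R_wR_z^*$ has rank at most one after the same reduction, namely $R_z^*R_w-R_wR_z^*=R_z^*P_wR_w$-type compressions. I would compute its single singular value via the identity $X^*X$ restricted to slice $n$ equal to $|\alpha_{n-1}|^2$ times a rank-one projection, obtained again from the Szegő recursion. Part (iv) and the $\Sigma$ formulas then follow by summing the block values.

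The main obstacle is pinning down the exact operator identity expressing $X_{\calM}$ on each slice as a rank-one map whose norm is $|\alpha_{n-1}|$, with correct index bookkeeping between slices $n-1$ and $n$. For this I would try the OPUC model directly, where $R_z^*R_w$ becomes the compression of multiplication by $\bar\zeta$ to $\calP_{n-1}$.
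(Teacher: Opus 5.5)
Your treatment of (i) and (iii) is the paper's argument. It uses the slice unitary onto $\calP_n$, the wandering lines $\C\varphi_n^*$ and $\C\varphi_n$, and $C_{\calM}=P_z-R_wP_zR_w^*$ written as a difference of two rank-one projections with overlap $\rho_{n-1}$. The overlap computation in (i) needs repair, however: $\overline{\varphi_n(0)}\,\|\Phi_n\|^{-1}$ has modulus $\kappa_n^2|\alpha_{n-1}|$, not $|\alpha_{n-1}|$. The correct one-liner is $\langle\varphi_n^*,\varphi_n\rangle_\mu=\overline{\varphi_n(0)}\,\langle\zeta^n,\varphi_n\rangle_\mu=\overline{\varphi_n(0)}/\kappa_n=\overline{\Phi_n(0)}=-\alpha_{n-1}$, since $\varphi_n^*-\overline{\varphi_n(0)}\zeta^n\in\calP_{n-1}\perp\varphi_n$.

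The genuine gap is (ii), which you explicitly leave open, and the fragments you offer do not close it. Your sketched factorization $X=R_z^*P_wR_w$ is identically zero, because $P_wR_w=0$. The relation $\|X\|_{\HS}^2=\|P_zP_w\|_{\HS}^2-1$ only fixes $\sum_j s_j(X)^2$, not the multiset of singular values, and it presupposes the Hilbert--Schmidt property. Your target, a rank-one map of norm $|\alpha_{n-1}|$ on slice $n$, is off by one.

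The missing step is short. Commutativity gives $X=R_z^*R_w-R_wR_z^*=R_z^*R_w(I-R_zR_z^*)=R_z^*R_wP_z$, so $X$ vanishes on $z\calM$. Also $R_w^*X=R_z^*R_w^*R_w-R_w^*R_wR_z^*=0$, which places $\Ran X$ in $\calM\ominus w\calM$. Since $X$ preserves total degree, on the degree-$(d+n)$ layer it is $e_n\mapsto\langle Xe_n,f_n\rangle f_n$. Here $\langle Xe_n,f_n\rangle=\langle we_n,zf_n\rangle=\langle\varphi_n^*,\zeta\varphi_n\rangle_\mu=\alpha_n$, by $\varphi_n^*=\rho_n\varphi_{n+1}^*+\alpha_n\zeta\varphi_n$ and $\varphi_{n+1}^*\perp\zeta\calP_n$. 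So layer $n$ carries $|\alpha_n|$, and the bottom layer $\C p$ carries $|\alpha_0|$.

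Your OPUC route also closes once the index is fixed. On slice $n$, $R_z^*R_w$ and $R_wR_z^*$ become $f\mapsto P_{\calP_n}(\bar\zeta f)$ and $f\mapsto P_{\calP_{n-1}}(\bar\zeta f)$ on $\calP_n$; the first is a compression to $\calP_n$, not $\calP_{n-1}$. Hence $X$ becomes $f\mapsto(P_{\calP_n}-P_{\calP_{n-1}})(\bar\zeta f)=\langle f,\zeta\varphi_n\rangle_\mu\varphi_n=\alpha_n\langle f,\varphi_n^*\rangle_\mu\varphi_n$, using $\zeta\varphi_n=\rho_n\varphi_{n+1}+\overline{\alpha_n}\varphi_n^*$.

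Finally, ``summing the block values'' does not give the Hilbert--Schmidt conclusion, since a Verblunsky sequence need not be square-summable. You need the Szeg\H{o}--Verblunsky theorem. Because $q\not\equiv0$ is a polynomial, $\log|q|\in L^1(\T)$, hence $\prod_n(1-|\alpha_n|^2)=\exp\int_\T\log(|q|^2/\|q\|_2^2)\,\dd m>0$, which forces $(\alpha_n)\in\ell^2$. Only then does (iv) with $r=2$ give $C_{\calM}\in\Sr_2$ and finite values of $\Sigma_0$ and $\Sigma_1$.
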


This reformulation has two consequences which are difficult to see from
cofactor formulas alone.  First, Szeg\H{o}'s theorem gives an exact global
identity.  If
\[
 \mathfrak M(q)=\exp\!\left(\int_{\T}\log|q|\,\dd m\right)
\]
is the Mahler measure, and $Q$ is the projection onto the common line of the
two wandering spaces, then
\begin{equation}\label{eq:intro-fredholm}
 \det\!\left(I-(P_zP_w-Q)^*(P_zP_w-Q)\right)
 =\det(I-X_{\calM}^*X_{\calM})
 =\frac{\mathfrak M(q)^2}{\|q\|_2^2}.
\end{equation}
Combining this identity with an elementary sharp comparison between the
coefficient norm and Mahler measure yields, for $d=\deg q$,
\begin{equation}\label{eq:intro-degree-bound}
 \|X_{\calM}\|_{\HS}^2
 \leq \log\frac{\|q\|_2^2}{\mathfrak M(q)^2}
 \leq\log\binom{2d}{d}<2d\log2.
\end{equation}

Second, the pure Fisher--Hartwig weight $|1-\zeta|^{2N}$ has
\[
 \alpha_n=-\frac{N}{n+N+1}.
\]
It follows that the core operator of $[(z-w)^N]$ satisfies
\begin{equation}\label{eq:intro-unbounded}
 \|C_{[(z-w)^N]}\|_{\HS}^2
 =1+2N^2\sum_{j=N+1}^{\infty}\frac1{j^2}
 =2N+\frac1{3N}-\frac1{15N^3}+O(N^{-5}).
\end{equation}
Thus the core Hilbert--Schmidt norms are unbounded even among homogeneous
principal polynomial submodules.  This gives a negative answer to Problem~10
in Yang's survey \cite{YangSurvey}.

The same OPUC data also control the complementary quotient module, but through
a different operator-theoretic mechanism.  Let $p$ have total degree $D\geq1$,
write
\[
 q(\zeta)=p(\zeta,1)=\sum_{j=0}^Dq_j\zeta^j,
 \qquad
 a=\frac{|q_0|^2}{\|q\|_2^2},\quad
 b=\frac{|q_D|^2}{\|q\|_2^2},
\]
where $q_0$ or $q_D$ is allowed to vanish, and denote by $\kappa_n$ the
positive leading coefficient of $\varphi_n$.  On
\[
 \mathcal Q_p=\HH\ominus[p]
\]
let $S_z$ and $S_w$ be the compressed coordinate shifts.  In
Section~\ref{sec:quotient} we prove that the non-zero singular values of
$Y_p=[S_z^*,S_w]$, before decreasing rearrangement, are
\begin{align}
 s_0(Y_p)&=\sqrt{(1-a)(1-b)},\label{eq:intro-quotient-s0}\\
 s_n(Y_p)&=|\alpha_{n-1}|
 \sqrt{(1-a\kappa_n^2)(1-b\kappa_n^2)},\qquad n\geq1.
 \label{eq:intro-quotient-sn}
\end{align}
The extra factors have a simple meaning: they measure how much of the two
wandering vectors survives one backward coordinate shift.  Thus the quotient
formula is not obtained merely by replacing the restricted shifts by their
compressions.  It arises from the passage from $AB$ to $BA$ in the block
matrix of the doubly commuting ambient shifts.

In particular, every such quotient cross-commutator is Hilbert--Schmidt and
\begin{equation}\label{eq:intro-quotient-bound}
 \|Y_p\|_{\HS}^2
 \leq 1+\log\frac{\|q\|_2^2}{\mathfrak M(q)^2}
 \leq1+\log\binom{2D}{D}.
\end{equation}
This degree-dependent estimate cannot be replaced by an absolute constant.
For $p_N=(z-w)^N$ we obtain an exact singular-value formula and
\begin{equation}\label{eq:intro-quotient-asymptotic}
 \|[S_z^*,S_w]\|_{\HS,\,\mathcal Q_{p_N}}^2\sim N.
\end{equation}
For comparison, the case $N=1$ recovers the self-commutator of the Bergman
shift and gives the exact value $\pi^2/3-3$.

The OPUC model also organizes the higher numerical invariants introduced in
\cite{LiuLuZu}.  Let $\mathcal C=\mathcal L\mathcal M$ be the CMV matrix of
$\mu_p$, with the standard even and odd block factors.  In
Section~\ref{sec:cmv} we prove
\begin{equation}\label{eq:intro-cmv}
 \Sigma_k([p])=
 \sum_{m\geq0}|(\mathcal C^{k-1}\mathcal L)_{2m,2m}|^2
 +\sum_{m\geq0}|(\mathcal M\mathcal C^{k-1})_{2m+1,2m+1}|^2,
 \qquad k\geq1.
\end{equation}
The formula shows that $\Sigma_k$ is the $\ell^2$-energy of a finite-range
nonlinear transform of the Verblunsky sequence.  It also makes clear that
monotonicity in $k$ is not a formal consequence of CMV unitarity: different
values of $k$ select diagonal entries of different unitary words.

This obstruction is genuine.  For
\[
 p_*(z,w)=z^2-\sqrt3zw+w^2
          =(z-e^{i\pi/6}w)(z-e^{-i\pi/6}w),
\]
the Verblunsky coefficients have an explicit six-step form.  Exact CMV
calculation, grouped into six-term blocks, gives
\begin{equation}\label{eq:intro-counterexample}
 \Sigma_4([p_*])-\Sigma_3([p_*])>\frac{13}{75000}>0.
\end{equation}
This disproves the proposed decrease of $(\Sigma_k)$, even for a quadratic
homogeneous generator.  The proof is entirely exact; the algebraic
certificate is recorded in Appendix~\ref{app:certificate}.

Finally, Section~\ref{sec:quasi} treats an $(s,t)$-quasi-homogeneous
polynomial.  Writing
\[
 p(z,w)=z^aw^bq(z^t,w^s),\qquad (s,t)=1,
\]
we show that the two wandering spaces split into $s$ and $t$ residue classes.
All mixed pairings vanish except in their common zero class.  Hence the
non-zero singular values of $P_zP_w$ are exactly those for the ordinary
homogeneous polynomial $q$; the weights and the monomial factor add only
zero blocks.  This explains why the quasi-homogeneous case is a formal
extension of the homogeneous one rather than a separate source of spectral
data.

Section~\ref{sec:general} then explains what survives for an arbitrary
polynomial generator.  If $p=p_a+\cdots+p_b$ is its decomposition into
non-zero extreme homogeneous parts, multiplication by $p$ gives a unitary
model on the polynomial closure in
\[
 L^2\!\left(\frac{|p|^2}{\|p\|^2}\,\dd m_{\T^2}\right).
\]
In this model total-degree layers at distance greater than $b-a$ are
orthogonal, so the moment matrix is block banded.  More sharply, all distinct
total-degree layers are orthogonal if and only if $a=b$.  Thus the scalar OPUC
model is not merely one convenient choice: it is a rigid consequence of
homogeneity.  The two edge spaces of this bivariate model carry a canonical
cross-Gram operator $\Gamma_p$.  We prove the exact identity
\begin{equation}\label{eq:intro-general-core}
 C_{[p]}^2\big|_{[p]\ominus z[p]}
 \simeq \Gamma_p^*\Gamma_p,
\end{equation}
so the non-extreme core eigenvalues are the positive and negative singular
values of $\Gamma_p$.  This is the precise sense in which the core spectrum
of a general polynomial principal submodule is governed by bivariate
orthogonal polynomials.  We also show that multiplication of a generator by
a cyclic polynomial leaves the principal submodule unchanged.  The elementary
example $p(z,w)=1+\lambda z$, $0<|\lambda|<1$, then proves that neither the
slice $p(\zeta,1)$ nor the Mahler-measure identity
\eqref{eq:intro-fredholm} can be used unchanged in the non-homogeneous
setting.  Section~\ref{sec:problems} formulates the resulting block-model,
Schatten, determinant, quotient and higher-invariant problems in a precise
form.

Throughout, inner products are linear in the first variable, and $m$ denotes
normalized Haar measure on $\T$ or $\T^2$, according to context.

\section{The homogeneous OPUC model}\label{sec:model}

For $n\geq0$ let
\[
 H_n=\Span\{z^jw^{n-j}:0\leq j\leq n\},
 \qquad H_{-1}=\{0\}.
\]
Then $\HH=\bigoplus_{n\geq0}H_n$.  Let
\begin{equation}\label{eq:pq-def}
 p(z,w)=\sum_{j=0}^d c_jz^jw^{d-j}\neq0,
 \qquad q(\zeta)=p(\zeta,1)=\sum_{j=0}^d c_j\zeta^j,
\end{equation}
and set $\calM=[p]$.  Since multiplication by $p$ shifts total degree by $d$,
\begin{equation}\label{eq:homogeneous-decomp}
 \calM=\bigoplus_{n\geq0}pH_n,
 \qquad
 z\calM=\bigoplus_{n\geq1}zpH_{n-1},
 \qquad
 w\calM=\bigoplus_{n\geq1}wpH_{n-1}.
\end{equation}
Multiplication by the non-zero polynomial $p$ is injective on $\C[z,w]$.
Consequently, for $n\geq1$,
\[
 \dim pH_n=n+1,
 \qquad
 \dim zpH_{n-1}=\dim wpH_{n-1}=n.
\]
It follows that
\begin{align}
 \calM\ominus z\calM
 &=\bigoplus_{n\geq0}(pH_n\ominus zpH_{n-1}),\label{eq:z-wandering-decomp}\\
 \calM\ominus w\calM
 &=\bigoplus_{n\geq0}(pH_n\ominus wpH_{n-1}),\label{eq:w-wandering-decomp}
\end{align}
and every displayed summand is one-dimensional.

Normalize the measure associated with $q$ by
\begin{equation}\label{eq:mu-def}
 \dd\mu(\zeta)=\frac{|q(\zeta)|^2}{\|q\|_2^2}\,\dd m(\zeta).
\end{equation}
Let $\calP_n$ be the polynomials of degree at most $n$ in $L^2(\mu)$.

\begin{proposition}[Unitary slice model]\label{prop:slice}
For $n\geq0$ define
\begin{equation}\label{eq:Un}
 U_nf(z,w)=\frac{p(z,w)}{\|p\|}\,w^nf(z/w),
 \qquad f\in\calP_n.
\end{equation}
Then $U_n$ is unitary from $\calP_n$ onto $pH_n$, and
\[
 U_n(\zeta\calP_{n-1})=zpH_{n-1},
 \qquad
 U_n(\calP_{n-1})=wpH_{n-1}.
\]
\end{proposition}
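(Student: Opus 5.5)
The plan is to check three things about $U_n$ in turn: that it maps $\calP_n$ into $pH_n$, that it preserves inner products, and that it is onto. After that, the two range identities drop out by direct substitution.

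\emph{Range.} If $f(\zeta)=\sum_{j=0}^n f_j\zeta^j$, then
\[
 w^nf(z/w)=\sum_{j=0}^n f_jz^jw^{n-j},
\]
which is the homogeneous polynomial of degree $n$ attached to $f$. So $U_nf\in pH_n$. The correspondence $f\mapsto w^nf(z/w)$ is a linear bijection from polynomials of degree at most $n$ onto $H_n$. Multiplication by $p$ is injective, so $U_n$ is a linear bijection from $\calP_n$ onto $pH_n$.

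\emph{Isometry.} This is the step that needs an idea. Write $g=w^nf(z/w)$, so that $pg$ is homogeneous of degree $n+d$. On $\T^2$ we have $|w|=1$, and homogeneity gives
\[
 (pg)(z,w)=w^{n+d}\,(qf)(z/w).
\]
Hence $|pg(z,w)|=|(qf)(z\bar w)|$. The map $(z,w)\mapsto(z\bar w,w)$ preserves Haar measure on $\T^2$. Integrating out $w$ therefore gives
\[
 \|pg\|_{\HH}^2=\int_{\T}|q f|^2\,\dd m.
\]
Taking $f=1$ and $n=0$ gives $\|p\|=\|q\|_2$. Dividing, we get
\[
 \|U_nf\|^2=\int|f|^2|q|^2/\|q\|_2^2\,\dd m=\|f\|_{L^2(\mu)}^2.
\]
Polarization upgrades this to preservation of inner products. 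Note that $\mu$ is a nontrivial measure: $|q|^2$ vanishes only at finitely many points. So $\calP_n$ is a genuine $(n+1)$-dimensional space, and unitarity follows.

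\emph{The two identities.} For $h\in\calP_{n-1}$, direct substitution gives
\[
 U_n(\zeta h)=\frac{p}{\|p\|}\,w^n\frac{z}{w}\,h(z/w)=z\,\frac{p}{\|p\|}\,w^{n-1}h(z/w)=z\,U_{n-1}h,
\]
and similarly
\[
 U_n h=w\,\frac{p}{\|p\|}\,w^{n-1}h(z/w)=w\,U_{n-1}h.
\]
Since $U_{n-1}$ maps $\calP_{n-1}$ onto $pH_{n-1}$, this yields $U_n(\zeta\calP_{n-1})=zpH_{n-1}$ and $U_n(\calP_{n-1})=wpH_{n-1}$. For $n=0$ both sides are $\{0\}$.

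The only nonroutine point is the isometry computation, specifically the measure-preserving change of variables on $\T^2$ combined with homogeneity.
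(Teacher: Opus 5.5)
Your proposal is correct and follows the paper's argument: homogenization into $H_n$, the identity $p(z,w)=w^dq(z/w)$ on $\T^2$ together with the Haar-measure-preserving change of variables $(z,w)\mapsto(z\bar w,w)$, and direct substitution for the two subspace identities. Your relation $U_n(\zeta h)=zU_{n-1}h$ is just the paper's monomial identity written linearly, and you additionally make explicit the surjectivity onto $pH_n$ (via injectivity of multiplication by $p$), which the paper leaves implicit.
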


\begin{proof}
The expression $w^nf(z/w)$ is the homogenization of $f$ and belongs to
$H_n$.  On $\T^2$ one has $p(z,w)=w^dq(z/w)$.  The change of variables
$(z,w)=(\zeta w,w)$ preserves normalized Haar measure, and hence
\begin{align*}
 \langle U_nf,U_ng\rangle_{\HH}
 &=\frac1{\|p\|^2}\int_{\T^2}|p(z,w)|^2
       f(z/w)\overline{g(z/w)}\,\dd m(z,w)\\
 &=\frac1{\|q\|_2^2}\int_{\T}f(\zeta)\overline{g(\zeta)}
       |q(\zeta)|^2\,\dd m(\zeta)
 =\langle f,g\rangle_{L^2(\mu)}.
\end{align*}
Here $\|p\|_{\HH}=\|q\|_{L^2(\T)}$.  The monomial identities
\[
 U_n(\zeta^{j+1})=\frac{p}{\|p\|}z^{j+1}w^{n-j-1},
 \qquad
 U_n(\zeta^j)=\frac{p}{\|p\|}z^jw^{n-j}
\]
give the two subspace correspondences.
\end{proof}

Let $(\varphi_n)_{n\geq0}$ be the orthonormal polynomials for $\mu$, with
positive leading coefficients $\kappa_n$.  If $h$ has degree at most $n$,
write
\[
 h^*(\zeta)=\zeta^n\overline{h(1/\overline\zeta)}.
\]
Then $\varphi_n^*$ is a unit vector orthogonal to
$\zeta\calP_{n-1}$.  Let $(\alpha_n)_{n\geq0}\subset\D$ be the
Verblunsky coefficients and put $\rho_n=(1-|\alpha_n|^2)^{1/2}$.  We use
the standard Szeg\H{o} recursions
\begin{align}
 \rho_n\varphi_{n+1}(\zeta)
 &=\zeta\varphi_n(\zeta)-\overline{\alpha_n}\varphi_n^*(\zeta),
 \label{eq:szego}\\
 \rho_n\varphi_{n+1}^*(\zeta)
 &=\varphi_n^*(\zeta)-\alpha_n\zeta\varphi_n(\zeta).
 \label{eq:szego-reversed}
\end{align}
The following two overlaps will be used repeatedly.

\begin{lemma}[OPUC overlaps]\label{lem:overlaps}
For $n\geq0$,
\begin{align}
 \langle\varphi_{n+1}^*,\varphi_{n+1}\rangle_{\mu}
 &=-\alpha_n,\label{eq:overlap-same}\\
 \langle\varphi_n^*,\zeta\varphi_n\rangle_{\mu}
 &=\alpha_n.\label{eq:overlap-shifted}
\end{align}
\end{lemma}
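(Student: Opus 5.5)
Both identities follow from the Szeg\H{o} recursions \eqref{eq:szego}--\eqref{eq:szego-reversed}, combined with two orthogonality facts. First, $\varphi_n\perp\calP_{n-1}$. Second, $\varphi_n^*\perp\zeta\calP_{n-1}$, which is stated just before the recursions. I will also use that the inner product is linear in the first variable. I would prove \eqref{eq:overlap-shifted} first and then deduce \eqref{eq:overlap-same} from it.

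For \eqref{eq:overlap-shifted}, pair the reversed recursion \eqref{eq:szego-reversed} with $\zeta\varphi_n$:
\[
 \rho_n\langle\varphi_{n+1}^*,\zeta\varphi_n\rangle_\mu
 =\langle\varphi_n^*,\zeta\varphi_n\rangle_\mu-\alpha_n\|\zeta\varphi_n\|_\mu^2 .
\]
Since $|\zeta|=1$ on $\T$, we have $\|\zeta\varphi_n\|_\mu=1$. Also $\zeta\varphi_n\in\zeta\calP_n$ and $\varphi_{n+1}^*\perp\zeta\calP_n$, so the left side vanishes. This gives $\langle\varphi_n^*,\zeta\varphi_n\rangle_\mu=\alpha_n$.

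For \eqref{eq:overlap-same}, multiply \eqref{eq:szego} by $\rho_n$, substitute \eqref{eq:szego-reversed}, and expand:
\[
 \rho_n^2\langle\varphi_{n+1}^*,\varphi_{n+1}\rangle_\mu
 =\langle\varphi_n^*-\alpha_n\zeta\varphi_n,\ \zeta\varphi_n-\overline{\alpha_n}\varphi_n^*\rangle_\mu .
\]
Because the second slot is conjugate-linear, the scalar $\overline{\alpha_n}$ comes out as $\alpha_n$. The four terms are therefore:
\begin{itemize}[leftmargin=*]
\item $\langle\varphi_n^*,\zeta\varphi_n\rangle_\mu=\alpha_n$, by the first part;
\item $-\alpha_n\langle\varphi_n^*,\varphi_n^*\rangle_\mu=-\alpha_n$, since $\|\varphi_n^*\|_\mu=\|\varphi_n\|_\mu=1$ on $\T$;
\item $-\alpha_n\langle\zeta\varphi_n,\zeta\varphi_n\rangle_\mu=-\alpha_n$;
\item $\alpha_n^2\langle\zeta\varphi_n,\varphi_n^*\rangle_\mu=\alpha_n^2\,\overline{\alpha_n}=\alpha_n|\alpha_n|^2$, by conjugating the first part.
\end{itemize}
The sum is $\alpha_n-2\alpha_n+\alpha_n|\alpha_n|^2=-\alpha_n(1-|\alpha_n|^2)=-\alpha_n\rho_n^2$. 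Dividing by $\rho_n^2>0$, which is allowed since $\alpha_n\in\D$, gives \eqref{eq:overlap-same}.

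The only real obstacle is sign and conjugation bookkeeping. The paper's inner product is linear in the first slot, and the convention in \eqref{eq:szego} places $\overline{\alpha_n}$ on $\varphi_n^*$. Both must be tracked carefully, since a slip would turn $\alpha_n$ into $\overline{\alpha_n}$. As a sanity check I would verify the case $n=0$ directly: $\varphi_0=\varphi_0^*=1$ and $\alpha_0=\overline{\int\zeta\,\dd\mu}$, which agrees with $\langle 1,\zeta\rangle_\mu=\int\overline{\zeta}\,\dd\mu$.
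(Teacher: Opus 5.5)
Your proof is correct and essentially the paper's: your argument for \eqref{eq:overlap-shifted} is the same (pair \eqref{eq:szego-reversed} with $\zeta\varphi_n$ and use $\varphi_{n+1}^*\perp\zeta\calP_n$), and your four-term expansion for \eqref{eq:overlap-same} is algebraically sound. The paper obtains \eqref{eq:overlap-same} more directly by pairing \eqref{eq:szego-reversed} with $\varphi_{n+1}$, using $\varphi_n^*\perp\varphi_{n+1}$ and $\langle\zeta\varphi_n,\varphi_{n+1}\rangle_\mu=\rho_n$ from \eqref{eq:szego}, which avoids the conjugation bookkeeping and does not require the shifted overlap first.
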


\begin{proof}
Take the inner product of \eqref{eq:szego-reversed} with
$\varphi_{n+1}$.  The first term on the right is orthogonal to
$\varphi_{n+1}$, while \eqref{eq:szego} gives
$\langle\zeta\varphi_n,\varphi_{n+1}\rangle=\rho_n$.  This proves
\eqref{eq:overlap-same}.  Rewriting \eqref{eq:szego-reversed} as
\[
 \varphi_n^*=\rho_n\varphi_{n+1}^*+\alpha_n\zeta\varphi_n
\]
and using $\varphi_{n+1}^*\perp\zeta\calP_n$ proves
\eqref{eq:overlap-shifted}.
\end{proof}

\begin{theorem}[Canonical wandering bases]\label{thm:wandering-bases}
For $n\geq0$ put
\begin{equation}\label{eq:ef-def}
 e_n=U_n\varphi_n^*,\qquad f_n=U_n\varphi_n.
\end{equation}
Then $(e_n)_{n\geq0}$ and $(f_n)_{n\geq0}$ are orthonormal bases of
$\calM\ominus z\calM$ and $\calM\ominus w\calM$, respectively.  Moreover,
\[
 e_0=f_0=\frac p{\|p\|}.
\]
\end{theorem}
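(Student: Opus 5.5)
The plan is to reduce everything to the slice decompositions \eqref{eq:z-wandering-decomp} and \eqref{eq:w-wandering-decomp}, and then transport each one-dimensional summand through the unitary $U_n$ of Proposition~\ref{prop:slice}. Fix $n\geq0$. By that proposition, $U_n$ maps $\calP_n$ unitarily onto $pH_n$ and carries $\zeta\calP_{n-1}$ onto $zpH_{n-1}$. Since a unitary map preserves orthogonal complements, we get
\[
 U_n\bigl(\calP_n\ominus\zeta\calP_{n-1}\bigr)=pH_n\ominus zpH_{n-1}.
\]
For $n=0$ we use the conventions $\calP_{-1}=\{0\}$ and $H_{-1}=\{0\}$. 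The same argument, now with $U_n(\calP_{n-1})=wpH_{n-1}$, gives $U_n(\calP_n\ominus\calP_{n-1})=pH_n\ominus wpH_{n-1}$.

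Next we identify the two one-dimensional spaces in $L^2(\mu)$. The measure $\mu$ has density $|q|^2/\|q\|_2^2$, which vanishes only at finitely many points, so $\mu$ has infinite support. Hence $\dim\calP_n=n+1$, and both $\zeta\calP_{n-1}$ and $\calP_{n-1}$ have dimension $n$. For the second space, $\varphi_n$ has degree exactly $n$ and is orthogonal to $\calP_{n-1}$ by definition, so $\calP_n\ominus\calP_{n-1}=\C\varphi_n$. For the first space, the paper has already noted that $\varphi_n^*$ is a unit vector orthogonal to $\zeta\calP_{n-1}$. If we want to verify this directly, we use that $|\zeta|=1$ on $\T$ to write
\[
 \langle\varphi_n^*,\zeta^{j}\rangle_\mu=\overline{\langle\varphi_n,\zeta^{n-j}\rangle_\mu},
\]
and the right side vanishes for $1\leq j\leq n$. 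The same identity on $\T$ also gives $\|\varphi_n^*\|_\mu=\|\varphi_n\|_\mu=1$. It follows that $\calP_n\ominus\zeta\calP_{n-1}=\C\varphi_n^*$.

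Combining the two steps, $e_n=U_n\varphi_n^*$ is a unit vector spanning $pH_n\ominus zpH_{n-1}$, and $f_n=U_n\varphi_n$ is a unit vector spanning $pH_n\ominus wpH_{n-1}$. For different $n$ these summands lie in the mutually orthogonal spaces $pH_n\subset H_{n+d}$. The orthogonal direct sums \eqref{eq:z-wandering-decomp} and \eqref{eq:w-wandering-decomp} therefore show that $(e_n)$ and $(f_n)$ are orthonormal bases of $\calM\ominus z\calM$ and $\calM\ominus w\calM$, respectively.

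Finally, $\mu$ is a probability measure because of the normalization by $\|q\|_2^2$, so $\varphi_0=\varphi_0^*=1$. From \eqref{eq:Un} with $n=0$ we then get $e_0=f_0=U_0 1=p/\|p\|$. None of these steps is a real obstacle. The only points needing care are the reversal identity on $\T$ used for $\varphi_n^*$, and the observation that $\mu$ has infinite support, which makes $\varphi_n$ well defined for every $n$.
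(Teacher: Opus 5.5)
Your proposal is correct and follows the paper's own proof. Both arguments transport the one-dimensional slice complements through $U_n$, identify them as $\C\varphi_n^*$ and $\C\varphi_n$, assemble them via the orthogonal layer decompositions, and use $\varphi_0=1$ to get $e_0=f_0=p/\|p\|$. Your direct check that $\varphi_n^*\perp\zeta\calP_{n-1}$ (via the reversal identity on $\T$) and your infinite-support remark just fill in standard details that the paper takes for granted.
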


\begin{proof}
By Proposition~\ref{prop:slice},
\[
 pH_n\ominus zpH_{n-1}=U_n(\calP_n\ominus\zeta\calP_{n-1})
 =\C U_n\varphi_n^*,
\]
and
\[
 pH_n\ominus wpH_{n-1}=U_n(\calP_n\ominus\calP_{n-1})
 =\C U_n\varphi_n.
\]
Different $n$ lie in orthogonal homogeneous layers.  The conclusion follows
from \eqref{eq:z-wandering-decomp}--\eqref{eq:w-wandering-decomp}; the
formula at $n=0$ uses $\varphi_0=1$ because $\mu$ is a probability measure.
\end{proof}

\section{Projection products, the cross-commutator and the core}\label{sec:operators}

For vectors $u,v$ in a Hilbert space we write
$(u\otimes v)h=\langle h,v\rangle u$.  The homogeneous decomposition and
Theorem~\ref{thm:wandering-bases} give
\[
 P_z=\sum_{n\geq0}e_n\otimes e_n,
 \qquad
 P_w=\sum_{n\geq0}f_n\otimes f_n,
\]
with strong operator convergence.

\begin{theorem}[Schmidt decomposition]\label{thm:schmidt}
One has
\begin{equation}\label{eq:schmidt}
 P_zP_w=e_0\otimes f_0-
 \sum_{n\geq1}\overline{\alpha_{n-1}}\,e_n\otimes f_n,
\end{equation}
where the series converges strongly.  After absorbing phases into the
$e_n$, this is a Schmidt decomposition.  Thus the non-zero singular values
of $P_zP_w$, counted with multiplicity before decreasing rearrangement, are
\[
 1,|\alpha_0|,|\alpha_1|,\ldots .
\]
In particular, for every $r>0$,
\[
 P_zP_w\in\Sr_r\quad\Longleftrightarrow\quad(\alpha_n)\in\ell^r.
\]
\end{theorem}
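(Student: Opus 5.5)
Using the expansions $P_z=\sum_n e_n\otimes e_n$ and $P_w=\sum_m f_m\otimes f_m$ from Theorem~\ref{thm:wandering-bases}, the plan is to compute $P_zP_w$ on each element of the orthonormal basis $(f_m)$ of $\calM\ominus w\calM$. It vanishes on $\calM\ominus(\calM\ominus w\calM)=w\calM$. For a basis vector $f_m$ we have
\[
 P_zP_wf_m=P_zf_m=\sum_{n\geq0}\langle f_m,e_n\rangle e_n .
\]
Both $e_n$ and $f_n$ lie in the homogeneous layer $pH_n$, and distinct layers are orthogonal. Hence $\langle f_m,e_n\rangle=0$ for $n\neq m$, and $P_zf_m=\langle f_m,e_m\rangle e_m$.

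For the diagonal overlaps, Proposition~\ref{prop:slice} says $U_m$ is unitary, so
\[
 \langle f_m,e_m\rangle=\langle\varphi_m,\varphi_m^*\rangle_\mu=\overline{\langle\varphi_m^*,\varphi_m\rangle_\mu}.
\]
For $m\geq1$, \eqref{eq:overlap-same} gives $\langle\varphi_m^*,\varphi_m\rangle_\mu=-\alpha_{m-1}$, so $\langle f_m,e_m\rangle=-\overline{\alpha_{m-1}}$. For $m=0$ the overlap is $1$, since $e_0=f_0$. Therefore
\[
 P_zP_w f_m=c_m e_m,\qquad c_0=1,\quad c_m=-\overline{\alpha_{m-1}}\ (m\ge1),
\]
and $P_zP_w=0$ on $w\calM$. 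This is exactly the operator
\[
 e_0\otimes f_0-\sum_{n\geq1}\overline{\alpha_{n-1}}\,e_n\otimes f_n .
\]

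For strong convergence, note that the partial sums applied to $h$ are $\sum_{n\le N}c_n\langle h,f_n\rangle e_n$. Since $|c_n|\le1$ and $(e_n)$ is orthonormal, the tail is bounded in norm by $\bigl(\sum_{n>N}|\langle h,f_n\rangle|^2\bigr)^{1/2}\to0$, by Bessel's inequality for $(f_n)$.

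To obtain a Schmidt decomposition, write $c_n=|c_n|\omega_n$ with $|\omega_n|=1$ and replace $e_n$ by $\omega_ne_n$. The families $(\omega_ne_n)$ and $(f_n)$ are orthonormal, so $P_zP_w=\sum_n|c_n|\,(\omega_ne_n)\otimes f_n$. Consequently $(P_zP_w)^*P_zP_w=\sum_n|c_n|^2f_n\otimes f_n$, and its nonzero eigenvalues are $|c_n|^2$. The singular values are then $1,|\alpha_0|,|\alpha_1|,\ldots$, where an index with $\alpha_n=0$ simply contributes no nonzero value.

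The Schatten statement follows directly: $\|P_zP_w\|_{\Sr_r}^r=1+\sum_n|\alpha_n|^r$, which is finite exactly when $(\alpha_n)\in\ell^r$. This holds for all $r>0$, with the quasi-norm interpretation when $r<1$. The only step needing care is the bookkeeping of the complex conjugation and the index shift in the overlap. The inner products are linear in the first slot, and $(u\otimes v)h=\langle h,v\rangle u$ fixes which conjugate appears.
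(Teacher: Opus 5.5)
Your proposal is correct and follows the paper's proof: homogeneity gives $P_zf_m=\langle f_m,e_m\rangle e_m$, and Lemma~\ref{lem:overlaps} gives the overlap $\langle\varphi_m,\varphi_m^*\rangle_\mu=-\overline{\alpha_{m-1}}$, so the Schmidt decomposition follows from the orthonormality of $(e_n)$ and $(f_n)$. Your explicit checks of strong convergence, phase absorption and the computation of $(P_zP_w)^*P_zP_w$ fill in details the paper treats as immediate.
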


\begin{proof}
The projection of $f_n$ onto $\calM\ominus z\calM$ belongs to the same
homogeneous layer and is
\[
 P_zf_n=\langle f_n,e_n\rangle e_n.
\]
For $n=0$ the coefficient is one.  For $n\geq1$,
Lemma~\ref{lem:overlaps} gives
\[
 \langle f_n,e_n\rangle
 =\langle\varphi_n,\varphi_n^*\rangle_\mu
 =-\overline{\alpha_{n-1}}.
\]
This proves \eqref{eq:schmidt}.  Its initial and final vector systems are
orthonormal, so the remaining assertions follow.
\end{proof}

\begin{theorem}[Cross-commutator]\label{thm:cross}
The cross-commutator $X=X_{\calM}$ satisfies
\begin{equation}\label{eq:cross-factor}
 X=R_z^*R_wP_z,
 \qquad
 Xe_n=\alpha_nf_n\quad(n\geq0),
\end{equation}
and it vanishes on $z\calM$.  Its non-zero singular values are therefore
$|\alpha_0|,|\alpha_1|,\ldots$, with multiplicity.
\end{theorem}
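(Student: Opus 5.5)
First I will establish the factorization $X=R_z^*R_wP_z$ by pure operator algebra. Since $R_z$ and $R_w$ commute and $R_z$ is an isometry, we have $R_z^*R_wR_z=R_z^*R_zR_w=R_w$. Hence
\[
 X R_z=R_z^*R_wR_z-R_wR_z^*R_z=R_w-R_w=0 .
\]
So $X$ vanishes on $z\calM=\Ran R_z$. Because $P_z=I-R_zR_z^*$ is the projection onto the complement of $\Ran R_z$, this gives $X=XP_z$. On $\Ran P_z$ we have $R_z^*=0$, so $XP_z=R_z^*R_wP_z-R_wR_z^*P_z=R_z^*R_wP_z$. This proves the first identity and the vanishing on $z\calM$.

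Next I compute $Xe_n=R_z^*(we_n)$ in the slice model. By Theorem~\ref{thm:wandering-bases}, $e_n=U_n\varphi_n^*$. Multiplying \eqref{eq:Un} by $w$ gives
\[
 we_n=\frac{p}{\|p\|}w^{n+1}\varphi_n^*(z/w)=U_{n+1}\varphi_n^*,
\]
where $\varphi_n^*$ is viewed in $\calP_{n+1}$. The adjoint $R_z^*$ maps the layer $pH_{n+1}$ into $pH_n$. It acts as $U_{n+1}g\mapsto U_n\bigl(\zeta^{-1}\cdot\text{proj}_{\zeta\calP_n}g\bigr)$: indeed, $R_zU_n h=U_{n+1}(\zeta h)$ by the monomial identities in Proposition~\ref{prop:slice}, and both $U_n$ and $U_{n+1}$ are unitary. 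Write the orthogonal decomposition $\calP_{n+1}=\zeta\calP_n\oplus\C\varphi_{n+1}^*$. The Szeg\H{o} recursion \eqref{eq:szego-reversed}, rewritten as in the proof of Lemma~\ref{lem:overlaps}, is
\[
 \varphi_n^*=\rho_n\varphi_{n+1}^*+\alpha_n\zeta\varphi_n .
\]
This is exactly that decomposition, so the $\zeta\calP_n$-component of $\varphi_n^*$ is $\alpha_n\zeta\varphi_n$. Therefore $R_z^*(we_n)=U_n(\alpha_n\varphi_n)=\alpha_nf_n$, which gives $Xe_n=\alpha_nf_n$.

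Finally, combine the two facts. $X$ is zero on $z\calM$ and maps the orthonormal basis $(e_n)$ of $\calM\ominus z\calM$ to $\alpha_nf_n$, where $(f_n)$ is orthonormal by Theorem~\ref{thm:wandering-bases}. So $X=\sum_{n\ge0}\alpha_n\,f_n\otimes e_n$ (strongly convergent), which is a Schmidt decomposition after absorbing phases. The nonzero singular values are therefore $|\alpha_n|$, with multiplicity. The only delicate step is identifying $R_z^*$ in the slice model, i.e. checking that the adjoint of multiplication by $z$ corresponds to projecting onto $\zeta\calP_n$ and then dividing by $\zeta$. That follows directly from the unitarity of $U_{n+1}$ together with the correspondence $U_{n+1}(\zeta\calP_n)=zpH_n$.
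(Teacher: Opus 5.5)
Your proof is correct and is essentially the paper's argument. The factorization comes from the same commutation identity. Your description of $R_z^*$ on $pH_{n+1}$ as ``project onto $\zeta\calP_n$, then divide by $\zeta$'' rests on the same decomposition $\varphi_n^*=\rho_n\varphi_{n+1}^*+\alpha_n\zeta\varphi_n$ that gives the overlap $\langle\varphi_n^*,\zeta\varphi_n\rangle_\mu=\alpha_n$ in Lemma~\ref{lem:overlaps}. The only difference is that the paper first uses $R_w^*X=0$ and homogeneity to know $Xe_n\in\C f_n$ and then computes one coefficient, whereas you compute the whole vector $R_z^*(we_n)=\alpha_nf_n$ directly and so do not need that range inclusion.
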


\begin{proof}
The coordinate isometries commute, and hence
\[
 R_z^*R_wP_z
 =R_z^*R_w(I-R_zR_z^*)
 =R_z^*R_w-R_wR_z^*=X.
\]
Moreover, $R_w^*X=0$, so $\Ran X\subset\calM\ominus w\calM$.  Homogeneity
forces $Xe_n$ to be a multiple of $f_n$, and
\[
 \langle Xe_n,f_n\rangle
 =\langle R_z^*R_we_n,f_n\rangle
 =\langle we_n,zf_n\rangle
 =\langle\varphi_n^*,\zeta\varphi_n\rangle_\mu
 =\alpha_n
\]
by \eqref{eq:overlap-shifted}.  This proves the result.
\end{proof}

The core operator has a particularly simple interpretation as a difference
of projections.  Since $R_z$ and $R_w$ commute,
\begin{equation}\label{eq:core-difference}
 C_{\calM}=P_z-R_wP_zR_w^*.
\end{equation}
The second term is the projection onto
$w(\calM\ominus z\calM)$.

\begin{lemma}[Difference of two rank-one projections]\label{lem:two-proj}
Let $u,v$ be unit vectors.  On $\Span\{u,v\}$ the non-zero eigenvalues of
$u\otimes u-v\otimes v$ are
\[
 \pm\sqrt{1-|\langle u,v\rangle|^2}.
\]
\end{lemma}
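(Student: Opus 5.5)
The plan is to compute directly in a convenient orthonormal basis of $\Span\{u,v\}$. Put $T=u\otimes u-v\otimes v$ and $c=\langle u,v\rangle$.

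First we dispose of the degenerate case. If $v=\lambda u$ with $|\lambda|=1$, then $T=0$, and $\sqrt{1-|c|^2}=0$. So there are no non-zero eigenvalues, and the formula holds trivially.

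Now assume $u,v$ are linearly independent, so $\Span\{u,v\}$ is two-dimensional. Note that $T$ is self-adjoint, that $T$ vanishes on $\Span\{u,v\}^\perp$, and that $T$ maps into $\Span\{u,v\}$. Hence the span reduces $T$, and it suffices to find the eigenvalues of the restriction.

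On the span the restriction has trace $\|u\|^2-\|v\|^2=0$. So its two eigenvalues are $\pm\lambda$ with $\lambda\geq0$ real, by self-adjointness, and $-\lambda^2=\det T|_{\Span\{u,v\}}$.

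To compute the determinant, choose the orthonormal basis $u$, $u'=(v-cu)/\sqrt{1-|c|^2}$, so that $v=cu+\sqrt{1-|c|^2}\,u'$. In this basis $u\otimes u$ has matrix $\begin{pmatrix}1&0\\0&0\end{pmatrix}$. Writing $s=\sqrt{1-|c|^2}$, the matrix of $v\otimes v$ is
\[
\begin{pmatrix}|c|^2& cs\\ \overline c s& s^2\end{pmatrix}
\]
(up to the conjugation convention for the off-diagonal entries, which does not affect the determinant). Therefore
\[
T\sim\begin{pmatrix}1-|c|^2&-cs\\-\overline c s&-s^2\end{pmatrix},
\qquad
\det=-s^2(1-|c|^2)-|c|^2s^2=-s^2.
\]
This gives $\lambda=s=\sqrt{1-|c|^2}$, which is non-zero exactly because $u,v$ are independent.

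There is no real obstacle here. The only points needing care are the reduction argument (that the span is invariant and $T$ vanishes on its complement) and the degenerate case where the span is one-dimensional, both handled above.
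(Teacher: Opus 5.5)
Your argument is correct and is essentially the paper's: both use self-adjointness, trace zero and rank at most two, and then one further invariant of the $2\times2$ compression. The paper uses $\operatorname{tr}(T^2)=2(1-|\langle u,v\rangle|^2)$, which needs no basis and covers the collinear case at once, while you compute the determinant in an explicit orthonormal basis. One small slip: with the paper's convention (inner product linear in the first variable), the projection of $v$ onto $u$ is $\overline{c}\,u$, so your second basis vector should be $u'=(v-\overline{c}u)/\sqrt{1-|c|^2}$; as written, $u'$ is an orthonormal complement to $u$ only when $c$ is real, but the determinant $-s^2$ is unaffected.
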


\begin{proof}
The operator has trace zero and the trace of its square is
$2(1-|\langle u,v\rangle|^2)$.  Its rank is at most two, which proves the
claim, including the degenerate case in which $u$ and $v$ are collinear.
\end{proof}

\begin{theorem}[Core spectrum]\label{thm:core-spectrum}
The non-zero spectrum of $C_{\calM}$, counted with multiplicity, is
\begin{equation}\label{eq:core-spectrum}
 \{1\}\cup\{\pm|\alpha_n|:n\geq0\},
\end{equation}
with zero completing the spectrum.  Hence, for every $r>0$,
\begin{equation}\label{eq:core-schatten}
 C_{\calM}\in\Sr_r\quad\Longleftrightarrow\quad(\alpha_n)\in\ell^r,
 \qquad
 \|C_{\calM}\|_{\Sr_r}^r=1+2\sum_{n\geq0}|\alpha_n|^r.
\end{equation}
\end{theorem}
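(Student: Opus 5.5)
We start from the difference-of-projections formula \eqref{eq:core-difference}, $C_{\calM}=P_z-R_wP_zR_w^*$. Here $P_z=\sum_n e_n\otimes e_n$, and $R_wP_zR_w^*=\sum_{n\geq0}(we_n)\otimes(we_n)$ is the projection onto $w(\calM\ominus z\calM)$. Both sums respect the homogeneous layers: $e_n\in pH_n$, and $we_{n}\in pH_{n+1}$ has total degree $n+d+1$. Grouping by layer, we obtain the orthogonal decomposition
\[
 C_{\calM}=e_0\otimes e_0\;\oplus\;\bigoplus_{n\geq1}\bigl(e_n\otimes e_n-(we_{n-1})\otimes(we_{n-1})\bigr),
\]
where the $n$th summand acts on the finite-dimensional space $pH_n$ and vanishes off $\Span\{e_n,we_{n-1}\}$. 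Since the layers $pH_n$ are mutually orthogonal and $C_{\calM}$ vanishes on $\calM\ominus\bigoplus_n pH_n=\{0\}$, the spectrum of $C_{\calM}$ is the union of the spectra of the blocks, with multiplicities added. The layer $n=0$ contributes the eigenvalue $1$.

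For $n\geq1$, Lemma~\ref{lem:two-proj} with $u=e_n$ and $v=we_{n-1}$ shows that the non-zero eigenvalues of the block are $\pm\sqrt{1-|\langle e_n,we_{n-1}\rangle|^2}$. It remains to compute this overlap in the slice model. Since $w\,U_{n-1}g=U_n g$ for $g\in\calP_{n-1}$ (this is the second correspondence of Proposition~\ref{prop:slice}), we have $we_{n-1}=U_n\varphi_{n-1}^*$, viewed as an element of $\calP_n$. Therefore
\[
 \langle e_n,we_{n-1}\rangle=\langle\varphi_n^*,\varphi_{n-1}^*\rangle_\mu .
\]
Using \eqref{eq:szego-reversed}, $\rho_{n-1}\varphi_n^*=\varphi_{n-1}^*-\alpha_{n-1}\zeta\varphi_{n-1}$. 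Pairing with $\varphi_n^*$, and using $\varphi_n^*\perp\zeta\calP_{n-1}$ together with $\|\varphi_n^*\|=1$, gives $\rho_{n-1}=\langle\varphi_{n-1}^*,\varphi_n^*\rangle$. So the overlap has modulus $\rho_{n-1}$, and the block eigenvalues are $\pm\sqrt{1-\rho_{n-1}^2}=\pm|\alpha_{n-1}|$. When $\alpha_{n-1}=0$ the block is zero, consistent with the degenerate case of the lemma.

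Collecting the blocks yields \eqref{eq:core-spectrum}, with $0$ filling the rest of the spectrum: each block is zero on the orthocomplement of its two-dimensional span inside $pH_n$, and the blocks exhaust $\calM$. Because $C_{\calM}$ is self-adjoint, its singular values are the absolute values of these eigenvalues, namely $1$ and $|\alpha_n|$, each $|\alpha_n|$ occurring twice. This gives \eqref{eq:core-schatten} directly, including the quasi-norm case $0<r<1$.

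The step most likely to cause trouble is the identification $we_{n-1}=U_n\varphi_{n-1}^*$. One has to track the normalization $p/\|p\|$ and check that the homogenization $w^{n}g(z/w)$ applied to $g\in\calP_{n-1}$ is exactly $w$ times the degree-$(n-1)$ homogenization. This is immediate from \eqref{eq:Un}, but it is what makes the overlap computable.
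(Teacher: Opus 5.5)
Your proposal is correct and follows the paper's own proof: you split $C_{\calM}=P_z-R_wP_zR_w^*$ into homogeneous blocks, identify $we_{n-1}$ with $U_n\varphi_{n-1}^*$, use the reversed Szeg\H{o} recursion to get overlap modulus $\rho_{n-1}$, and apply Lemma~\ref{lem:two-proj}. Your explicit check that $wU_{n-1}g=U_ng$ and your remark on the degenerate case $\alpha_{n-1}=0$ spell out details the paper leaves implicit.
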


\begin{proof}
The degree-$d$ block of \eqref{eq:core-difference} is the projection onto
$\C e_0$, and hence contributes the eigenvalue $1$.  For $n\geq1$, the
degree-$(d+n)$ block is the difference of the projections onto
\[
 \C e_n\quad\text{and}\quad\C R_we_{n-1}.
\]
Under $U_n$, the vector $R_we_{n-1}$ corresponds to
$\varphi_{n-1}^*\in\calP_n$.  The reversed Szeg\H{o} recursion gives
\[
 \varphi_{n-1}^*=\rho_{n-1}\varphi_n^*
        +\alpha_{n-1}\zeta\varphi_{n-1}.
\]
Since $\varphi_n^*\perp\zeta\calP_{n-1}$,
\[
 |\langle e_n,R_we_{n-1}\rangle|=\rho_{n-1}.
\]
Lemma~\ref{lem:two-proj} now gives the eigenvalues
$\pm(1-\rho_{n-1}^2)^{1/2}=\pm|\alpha_{n-1}|$.  Orthogonality of the
homogeneous blocks proves \eqref{eq:core-spectrum}, and
\eqref{eq:core-schatten} follows by summing the singular values.
\end{proof}

\begin{corollary}[Basic numerical invariants]\label{cor:sigma01}
Every homogeneous principal polynomial submodule is Hilbert--Schmidt, and
\begin{equation}\label{eq:sigma01}
 \Sigma_0([p])=1+\sum_{n\geq0}|\alpha_n|^2,
 \qquad
 \Sigma_1([p])=\sum_{n\geq0}|\alpha_n|^2.
\end{equation}
In particular, $\Sigma_0([p])-\Sigma_1([p])=1$ and
\[
 \|C_{[p]}\|_{\HS}^2=1+2\sum_{n\geq0}|\alpha_n|^2.
\]
\end{corollary}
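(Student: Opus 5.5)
The corollary should follow directly from Theorems~\ref{thm:schmidt}, \ref{thm:cross} and \ref{thm:core-spectrum}. The only input needed beyond them is that the Verblunsky sequence is square-summable.

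The first step is to show that $(\alpha_n)\in\ell^2$. The measure $\mu$ has density $|q|^2/\|q\|_2^2$ with respect to $m$, and $q$ is a nonzero polynomial. Hence $\log|q|$ is integrable on $\T$, so $\mu$ satisfies the Szeg\H{o} condition. Szeg\H{o}'s theorem in its Verblunsky form then gives
\[
\prod_{n\geq0}(1-|\alpha_n|^2)=\exp\!\Big(\int_\T\log\frac{|q|^2}{\|q\|_2^2}\,\dd m\Big)=\frac{\mathfrak M(q)^2}{\|q\|_2^2}>0 .
\]
A positive infinite product $\prod(1-|\alpha_n|^2)$ with all $|\alpha_n|<1$ converges exactly when $\sum|\alpha_n|^2<\infty$. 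If one prefers to avoid quoting Szeg\H{o}, the finite identities $\prod_{k<n}\rho_k^2=\kappa_n^{-2}=D_{n+1}/D_n$ for Toeplitz determinants give the same conclusion, because $\kappa_n^{-2}$ decreases to the positive limit $\mathfrak M(q)^2/\|q\|_2^2$.

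Next I apply Theorem~\ref{thm:core-spectrum} with $r=2$. Since $(\alpha_n)\in\ell^2$, $C_{[p]}$ is Hilbert--Schmidt, which is the first claim, and
\[
\|C_{[p]}\|_{\HS}^2=1+2\sum_{n\geq0}|\alpha_n|^2 .
\]
By definition, $\Sigma_0=\|P_zP_w\|_{\HS}^2$ is the sum of the squared singular values of $P_zP_w$. Theorem~\ref{thm:schmidt} lists these as $1,|\alpha_0|,|\alpha_1|,\dots$, so $\Sigma_0([p])=1+\sum_{n\geq0}|\alpha_n|^2$. In the same way, Theorem~\ref{thm:cross} lists the singular values of $X_\calM$ as $|\alpha_0|,|\alpha_1|,\dots$, which gives $\Sigma_1([p])=\sum_{n\geq0}|\alpha_n|^2$. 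Subtracting the two formulas yields $\Sigma_0-\Sigma_1=1$.

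The only step with real content is the square-summability. It rests on the Szeg\H{o} condition, and that condition holds automatically here because the weight is $|q|^2$ for a polynomial $q$. Every zero of $q$ on $\T$ has finite order, so $\log|q|$ has only logarithmic singularities and is integrable.
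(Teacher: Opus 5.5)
Your proposal is correct and follows the paper's proof: the Szeg\H{o} condition for $|q|^2\,\dd m/\|q\|_2^2$ gives $\prod_n(1-|\alpha_n|^2)>0$, hence $(\alpha_n)\in\ell^2$, and the formulas are then read off from Theorems~\ref{thm:schmidt}, \ref{thm:cross} and \ref{thm:core-spectrum}. (Your aside via $D_{n+1}/D_n$ does not actually avoid Szeg\H{o}, because identifying that limit is Szeg\H{o}'s theorem itself; only positivity of the limit is needed, and that already follows from the easy half, Jensen's inequality applied to $|\Phi_n^*|^2|q|^2/\|q\|_2^2$ with $\Phi_n^*(0)=1$.)
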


\begin{proof}
Since $q$ is a non-zero polynomial, $\log|q|\in L^1(\T)$.  Thus the
probability measure \eqref{eq:mu-def} satisfies the Szeg\H{o} condition.
The Szeg\H{o}--Verblunsky theorem \cite[Theorem~2.2.5]{SimonOPUC1} gives
\[
 \prod_{n\geq0}(1-|\alpha_n|^2)>0.
\]
It follows from $x\leq-\log(1-x)$ that $(\alpha_n)\in\ell^2$.  The formulas
are now Theorems~\ref{thm:schmidt}, \ref{thm:cross}, and
\ref{thm:core-spectrum}.
\end{proof}

\begin{remark}\label{rem:trace-class}
Within the present polynomial class, Baxter's theorem
\cite[Chapter~5]{SimonOPUC1} shows that $(\alpha_n)\in\ell^1$ precisely when
$q$ has no zero on $\T$.  Therefore $C_{[p]}$ is trace class exactly when
$p(\zeta,1)$ has no unit-circle zero.  In the zero-free case the
Verblunsky coefficients in fact decay exponentially.
\end{remark}

\section{Toeplitz determinants and Mahler measure}\label{sec:toeplitz}

Let
\[
 \widehat\mu(k)=\int_{\T}\zeta^{-k}\,\dd\mu(\zeta),\qquad k\in\mathbb Z,
\]
and define
\begin{equation}\label{eq:Dn}
 D_0=1,
 \qquad
 D_n=\det\big(\widehat\mu(j-k)\big)_{j,k=0}^{n-1}\quad(n\geq1).
\end{equation}
The matrices are positive definite, so $D_n>0$.  Let $A_n$ denote the
$(n+1)\times(n+1)$ moment matrix, and let $C_n$ be the unsigned corner minor
obtained by deleting the first row and last column of $A_n$.

\begin{proposition}[Toeplitz endpoint identity]\label{prop:desnanot}
For $n\geq1$,
\begin{equation}\label{eq:desnanot}
 D_{n+1}D_{n-1}=D_n^2-|C_n|^2.
\end{equation}
Moreover,
\begin{equation}\label{eq:alpha-D}
 |\alpha_{n-1}|=\frac{|C_n|}{D_n},
 \qquad
 |\alpha_{n-1}|^2=1-\frac{D_{n-1}D_{n+1}}{D_n^2}.
\end{equation}
\end{proposition}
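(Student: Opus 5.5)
The identity \eqref{eq:desnanot} comes from the Desnanot--Jacobi (Dodgson condensation) identity applied to $A_n$; the formulas \eqref{eq:alpha-D} then follow from the standard determinant expressions for $\kappa_n$ and $\alpha_{n-1}$. Two special features of Toeplitz matrices will be used. The Hermitian Toeplitz matrix $A_n=(\widehat\mu(j-k))_{j,k=0}^{n}$ has $\det A_n=D_{n+1}$. Deleting its first and last rows and columns leaves the Toeplitz block of size $n-1$, with determinant $D_{n-1}$. Deleting either the first row and column, or the last row and column, leaves a copy of the size-$n$ Toeplitz matrix, with determinant $D_n$.

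Let $A^{i}_{j}$ denote $A_n$ with row $i$ and column $j$ deleted. Desnanot--Jacobi with rows and columns $0$ and $n$ reads
\[
 \det A_n\det A^{0,n}_{0,n}=\det A^0_0\det A^n_n-\det A^0_n\det A^n_0 .
\]
The left side is $D_{n+1}D_{n-1}$ and the first product on the right is $D_n^2$. For the mixed term, note that $A_n$ is Hermitian, so $A^n_0=(A^0_n)^*$ and $\det A^n_0=\overline{\det A^0_n}=\overline{C_n}$ (up to the same sign convention, which cancels in the product). The mixed product is therefore $|C_n|^2$, which gives \eqref{eq:desnanot}. The only point needing care is the bookkeeping: deleting row $n$ and column $0$ is the adjoint of deleting row $0$ and column $n$, so the two unsigned corner minors are complex conjugates.

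For \eqref{eq:alpha-D}, I would use the classical formulas $\kappa_n^2=D_n/D_{n+1}$ and $\prod_{j<n}\rho_j^2=D_{n+1}/D_n$, where the latter holds because $\kappa_n^{-2}$ is the minimal squared norm $\|\Phi_n\|^2_\mu$ of the monic orthogonal polynomial, and $\|\Phi_{n}\|^2=\prod_{j<n}\rho_j^2$ by the Szeg\H{o} recursion. It follows that
\[
 \rho_{n-1}^2=\frac{D_{n+1}/D_n}{D_n/D_{n-1}}=\frac{D_{n-1}D_{n+1}}{D_n^2},
\]
which is the second identity. Combined with \eqref{eq:desnanot}, it gives $|\alpha_{n-1}|^2=|C_n|^2/D_n^2$, which is the first. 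Alternatively, one can check $|\alpha_{n-1}|=|C_n|/D_n$ directly, since $-\overline{\alpha_{n-1}}=\Phi_n(0)$ and by Cramer's rule $\Phi_n(0)$ equals $\pm$ a corner minor divided by $D_n$. Either route works. The first is cleaner because only moduli are claimed, so sign conventions never matter.

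I expect the only real obstacle to be the index conventions: making sure the deleted-submatrix determinants are exactly $D_n$, $D_{n-1}$ and $C_n,\overline{C_n}$ for the indexing $\widehat\mu(j-k)$, and confirming $D_{n+1}/D_n=\|\Phi_n\|_\mu^2$ with $D_0=1$ normalizing the case $n=1$.
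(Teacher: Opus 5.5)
Your proof is correct. The Desnanot--Jacobi step is exactly the paper's argument: the principal minors are $D_n$ by Toeplitzness, the central minor is $D_{n-1}$, and Hermitian symmetry makes the two corner minors complex conjugates.

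For \eqref{eq:alpha-D} you run the logic in the opposite direction from the paper. The paper first proves $|\alpha_{n-1}|=|C_n|/D_n$. It does this by Cramer's rule, identifying $|C_n|/D_n$ with the overlap $|\langle\varphi_n^*,\varphi_n\rangle_\mu|$ and then invoking Lemma~\ref{lem:overlaps}. It then reads off the determinant-ratio formula from \eqref{eq:desnanot}.

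You instead obtain $\rho_{n-1}^2=D_{n-1}D_{n+1}/D_n^2$ directly. You use the Gram-determinant formula $\|\Phi_n\|_\mu^2=D_{n+1}/D_n$ (correct with the paper's convention that $D_n$ is $n\times n$) together with $\|\Phi_n\|_\mu^2=\prod_{j<n}\rho_j^2$. You then recover $|C_n|/D_n$ from \eqref{eq:desnanot}. Since \eqref{eq:desnanot} is equivalent to $D_{n-1}D_{n+1}/D_n^2+|C_n|^2/D_n^2=1$ and $\rho_{n-1}^2+|\alpha_{n-1}|^2=1$, any two of the three identities give the third, so both orders are valid.

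Your route needs only the norm formula for the monic polynomial. It never has to track which corner minor Cramer's rule produces or with what sign. The paper's route instead ties $C_n$ directly to the overlap of the two wandering vectors $e_n,f_n$. That overlap is the quantity appearing as a singular value of $P_zP_w$ in Theorem~\ref{thm:schmidt}, which is the connection the paper wants to emphasize. The alternative you sketch via $\Phi_n(0)=-\overline{\alpha_{n-1}}$ and Cramer's rule is essentially the paper's argument.
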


\begin{proof}
Apply the Desnanot--Jacobi identity to $A_n$, using its first and last rows
and columns.  The two principal $n\times n$ minors are $D_n$ by
Toeplitzness, the central $(n-1)\times(n-1)$ minor is $D_{n-1}$, and the
opposite corner minors are complex conjugates.  This proves
\eqref{eq:desnanot}.

Alternatively, the unit vectors spanning the two endpoint complements in
$\calP_n$ are $\varphi_n^*$ and $\varphi_n$.  Cramer's rule applied to the
dual basis of $1,\zeta,\ldots,\zeta^n$ gives
\[
 |\langle\varphi_n^*,\varphi_n\rangle|=\frac{|C_n|}{D_n}.
\]
The left-hand side is $|\alpha_{n-1}|$ by
Lemma~\ref{lem:overlaps}.  Combining this with \eqref{eq:desnanot} yields
\eqref{eq:alpha-D}.
\end{proof}

\begin{corollary}[Determinant forms of the singular data]\label{cor:det-forms}
The singular value associated with the $n$th positive homogeneous layer is
\[
 s_n(P_zP_w)=s_{n-1}(X_{\calM})
 =\left(1-\frac{D_{n-1}D_{n+1}}{D_n^2}\right)^{1/2},\qquad n\geq1.
\]
Consequently,
\[
 \|P_zP_w\|_{\HS}^2
 =1+\sum_{n\geq1}\left(1-\frac{D_{n-1}D_{n+1}}{D_n^2}\right).
\]
\end{corollary}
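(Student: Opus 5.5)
The corollary follows by combining Theorem~\ref{thm:schmidt}, Theorem~\ref{thm:cross} and Proposition~\ref{prop:desnanot}; there is no new analytic input.

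\emph{Step 1: identify the singular values.} By Theorem~\ref{thm:schmidt}, the singular value of $P_zP_w$ attached to the degree-$(d+n)$ layer, $n\geq1$, is the modulus of the coefficient of $e_n\otimes f_n$ in \eqref{eq:schmidt}, namely $|\alpha_{n-1}|$. By Theorem~\ref{thm:cross}, $Xe_{n-1}=\alpha_{n-1}f_{n-1}$, and both $(e_m)$ and $(f_m)$ are orthonormal. Hence the singular value of $X_{\calM}$ carried by the pair $(e_{n-1},f_{n-1})$ is also $|\alpha_{n-1}|$. With the indexing fixed layer by layer, before any decreasing rearrangement, this gives $s_n(P_zP_w)=s_{n-1}(X_{\calM})=|\alpha_{n-1}|$.

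\emph{Step 2: insert the determinant formula.} Substitute the second identity in \eqref{eq:alpha-D}, $|\alpha_{n-1}|^2=1-D_{n-1}D_{n+1}/D_n^2$, and take nonnegative square roots. This yields the displayed formula for $n\geq1$. The quantity under the root is nonnegative because the Toeplitz endpoint identity \eqref{eq:desnanot} gives $D_n^2-D_{n-1}D_{n+1}=|C_n|^2\geq0$, and $D_n>0$.

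\emph{Step 3: the Hilbert--Schmidt norm.} Sum the squares of the singular values from Theorem~\ref{thm:schmidt}: the leading $1$ from $e_0\otimes f_0$, plus $|\alpha_{n-1}|^2$ for each $n\geq1$. Corollary~\ref{cor:sigma01} guarantees the series converges, since $(\alpha_n)\in\ell^2$. Substituting Step~2 then gives the stated expression for $\|P_zP_w\|_{\HS}^2$.

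There is no real obstacle. The only points needing care are the index shift between $P_zP_w$ (indexed from the $n$th layer) and $X_{\calM}$ (indexed by $e_{n-1}$), and the remark that $s_n$ here denotes the layer-indexed singular value rather than the decreasingly rearranged one.
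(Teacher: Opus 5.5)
Your proposal is correct and is essentially the paper's argument: the paper states this corollary without a separate proof, as an immediate consequence of Theorems~\ref{thm:schmidt} and~\ref{thm:cross} (layer-indexed singular values $|\alpha_{n-1}|$) combined with the identity \eqref{eq:alpha-D} of Proposition~\ref{prop:desnanot}. You were right to flag the index shift between $P_zP_w$ and $X_{\calM}$, and that $s_n$ means the layer-indexed singular value rather than the decreasingly rearranged one.
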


The exact product of the determinant-ratio defects is governed by Mahler
measure.  Set
\begin{equation}\label{eq:Mahler}
 \mathfrak M(q)=\exp\!\left(\int_{\T}\log|q(\zeta)|\,\dd m(\zeta)\right).
\end{equation}

\begin{theorem}[Szeg\H{o}--Mahler product]\label{thm:szego-mahler}
For the Verblunsky coefficients of \eqref{eq:mu-def},
\begin{equation}\label{eq:szego-mahler}
 \prod_{n=0}^{\infty}(1-|\alpha_n|^2)
 =\exp\!\left(\int_{\T}\log\frac{|q|^2}{\|q\|_2^2}\,\dd m\right)
 =\frac{\mathfrak M(q)^2}{\|q\|_2^2}>0.
\end{equation}
\end{theorem}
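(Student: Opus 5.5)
The plan is to combine the Szeg\H{o} limit theorem for Toeplitz determinants with the determinant form of the Verblunsky coefficients in Proposition~\ref{prop:desnanot}, and then to evaluate the resulting geometric mean of the weight directly from its explicit form $|q|^2/\|q\|_2^2$.

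First, telescope \eqref{eq:alpha-D}. Since $1-|\alpha_{n-1}|^2=D_{n-1}D_{n+1}/D_n^2$ for $n\geq1$ and $D_0=1$, $D_1=\widehat\mu(0)=1$, we get
\[
 \prod_{n=0}^{N-1}(1-|\alpha_n|^2)=\frac{D_{N+1}}{D_N}.
\]
Equivalently, one can use the standard OPUC identity $D_{N+1}/D_N=\kappa_N^{-2}=\prod_{n<N}\rho_n^2$, which follows from \eqref{eq:szego} by comparing leading coefficients ($\rho_n\kappa_{n+1}=\kappa_n$, $\kappa_0=1$). The infinite product on the left of \eqref{eq:szego-mahler} is therefore $\lim_N D_{N+1}/D_N$.

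Second, identify that limit using Szeg\H{o}'s theorem in its Verblunsky form \cite[Theorem~2.3.1 / 2.7.14]{SimonOPUC1}: for a probability measure $\dd\mu=w\,\dd m+\dd\mu_s$,
\[
 \prod_{n\geq0}(1-|\alpha_n|^2)=\exp\!\Big(\int_{\T}\log w\,\dd m\Big).
\]
Here $\mu$ is absolutely continuous with $w=|q|^2/\|q\|_2^2$, so this gives the first equality directly. Positivity follows because $\log|q|\in L^1(\T)$ for a nonzero polynomial, as already noted in the proof of Corollary~\ref{cor:sigma01}.

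Third, compute the integral:
\[
 \int_{\T}\log\frac{|q|^2}{\|q\|_2^2}\,\dd m=2\int_{\T}\log|q|\,\dd m-\log\|q\|_2^2 .
\]
Exponentiating gives $\mathfrak M(q)^2/\|q\|_2^2$ by \eqref{eq:Mahler}. If one wants the integral explicitly, Jensen's formula gives $\mathfrak M(q)=|c_d|\prod\max(1,|z_j|)$ over the roots $z_j$ (or the analogous expression with the lowest nonzero coefficient), but this is not needed.

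The main obstacle is only the appeal to the full Szeg\H{o} theorem, which includes the case where $q$ has zeros on $\T$ and the weight is not bounded below. I would cite Simon's version, valid for arbitrary weights with $\log w\in L^1$. If a self-contained argument is preferred, the fallback is the variational characterization $D_{N+1}/D_N=\min\{\|f\|_\mu^2: f\in\mathcal P_N,\ f(0)=1\}$. Its limit is $\exp\int\log w$, via the outer function $h$ with $|h|^2=w$: the lower bound comes from Jensen's inequality, and the upper bound from approximating $h(0)/h$ by polynomials.
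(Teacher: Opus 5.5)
Your proposal is correct and follows the paper's proof. Both arguments apply the Szeg\H{o}--Verblunsky product theorem to the absolutely continuous probability measure with weight $|q|^2/\|q\|_2^2$, use integrability of $\log|q|$ for positivity, and read off the second equality from $\int_{\T}\log(|q|^2/\|q\|_2^2)\,\dd m=2\log\mathfrak M(q)-\log\|q\|_2^2$; your telescoping through \eqref{eq:alpha-D} to $\lim_N D_{N+1}/D_N$ and the variational fallback are valid but optional extras the paper does not need.
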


\begin{proof}
The function $\log|q|$ is integrable because the logarithmic singularities
at the finitely many unit-circle zeros of $q$ are integrable.  Apply the
Szeg\H{o}--Verblunsky product theorem to the probability measure
\eqref{eq:mu-def}; see \cite[Theorem~2.2.5]{SimonOPUC1} or
\cite[Chapter~5]{GrenanderSzego}.
\end{proof}

Let $Q=e_0\otimes e_0$, the projection onto the common line of the two
wandering spaces, and put $K=P_zP_w-Q$.

\begin{corollary}[Fredholm determinant and logarithmic trace]\label{cor:fredholm}
Both $K^*K$ and $X_{\calM}^*X_{\calM}$ are trace class, and
\begin{equation}\label{eq:fredholm}
 \det(I-K^*K)=\det(I-X_{\calM}^*X_{\calM})
 =\frac{\mathfrak M(q)^2}{\|q\|_2^2}.
\end{equation}
Furthermore,
\begin{equation}\label{eq:logtrace}
 \sum_{r=1}^{\infty}\frac1r\|X_{\calM}\|_{\Sr_{2r}}^{2r}
 =\log\frac{\|q\|_2^2}{\mathfrak M(q)^2}.
\end{equation}
The same identity holds with $X_{\calM}$ replaced by $K$.
\end{corollary}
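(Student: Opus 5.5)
The plan is to reduce everything to the singular values already computed and then invoke Theorem~\ref{thm:szego-mahler}. By Theorem~\ref{thm:schmidt}, $K=P_zP_w-e_0\otimes f_0$, and since $e_0=f_0$ this is $K=-\sum_{n\geq1}\overline{\alpha_{n-1}}\,e_n\otimes f_n$. Both $(e_n)$ and $(f_n)$ are orthonormal systems, so this is a Schmidt decomposition and
\[
 K^*K=\sum_{n\geq1}|\alpha_{n-1}|^2\,f_n\otimes f_n .
\]
Likewise, Theorem~\ref{thm:cross} gives $Xe_n=\alpha_nf_n$ and $X=0$ on $z\calM$. Hence $X^*X=\sum_{n\geq0}|\alpha_n|^2\,e_n\otimes e_n$. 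In both cases the non-zero eigenvalues are exactly $|\alpha_n|^2$, $n\geq0$, each with multiplicity one. Corollary~\ref{cor:sigma01} gives $\sum|\alpha_n|^2<\infty$, so both operators are trace class.

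For a positive trace-class operator with eigenvalues $\lambda_j$, the Fredholm determinant is $\det(I-T)=\prod_j(1-\lambda_j)$, and the product converges absolutely because $\sum\lambda_j<\infty$. Applying this to $K^*K$ and to $X^*X$ gives, in both cases,
\[
 \prod_{n\geq0}(1-|\alpha_n|^2).
\]
Theorem~\ref{thm:szego-mahler} identifies this product with $\mathfrak M(q)^2/\|q\|_2^2$, which proves \eqref{eq:fredholm}.

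For \eqref{eq:logtrace}, each factor satisfies $|\alpha_n|<1$, since the $\alpha_n$ lie in $\D$. Theorem~\ref{thm:szego-mahler} shows the product is positive, so we may take logarithms and expand:
\[
 -\log\prod_n(1-|\alpha_n|^2)=\sum_n\sum_{r\geq1}\frac{|\alpha_n|^{2r}}{r}.
\]
All terms are non-negative, so Tonelli's theorem allows the order of summation to be interchanged. The inner sum over $n$ is $\sum_n|\alpha_n|^{2r}=\|X\|_{\Sr_{2r}}^{2r}$, because the singular values of $X$ are the $|\alpha_n|$. Taking logarithms of \eqref{eq:fredholm} therefore yields \eqref{eq:logtrace}. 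The same computation applies to $K$, since $K$ has the same singular values.

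No step is a real obstacle. The only points to check are that $e_0=f_0$, so subtracting $Q$ removes exactly the singular value $1$, and that the determinant of $I$ minus a trace-class operator is the product over its eigenvalues counted with multiplicity. The analytic input is entirely Theorem~\ref{thm:szego-mahler}.
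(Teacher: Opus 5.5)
Your proposal is correct and follows the paper's own argument. Like the paper, you read off the common non-zero singular values $|\alpha_n|$ of $K$ and $X_{\calM}$ from Theorems~\ref{thm:schmidt} and~\ref{thm:cross}, identify the determinant with the product in Theorem~\ref{thm:szego-mahler}, and obtain the logarithmic trace from Tonelli and the series for $-\log(1-x)$. One wording fix: the eigenvalues $|\alpha_n|^2$ are counted once per index $n$, not with multiplicity one, since different $n$ may give equal or zero values; this does not affect the product or the sums.
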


\begin{proof}
Theorems~\ref{thm:schmidt} and \ref{thm:cross} show that $K$ and
$X_{\calM}$ have the same non-zero singular values $|\alpha_n|$.  Thus
\eqref{eq:fredholm} is \eqref{eq:szego-mahler}.  Finally, Tonelli's theorem
and $-\log(1-x)=\sum_{r\geq1}x^r/r$ give
\[
 -\log\prod_{n\geq0}(1-|\alpha_n|^2)
 =\sum_{r\geq1}\frac1r\sum_{n\geq0}|\alpha_n|^{2r},
\]
which is \eqref{eq:logtrace}.
\end{proof}

We finish this section with a sharp degree-dependent bound.  The elementary
coefficient estimate below is closely related to the Mahler-measure
inequality in \cite{StulovYang}.

\begin{lemma}[Coefficient norm versus Mahler measure]\label{lem:mahler-bound}
If $q$ is a non-zero polynomial of degree at most $d$, then
\begin{equation}\label{eq:mahler-bound}
 \|q\|_2^2\leq\binom{2d}{d}\mathfrak M(q)^2.
\end{equation}
If $\deg q=d$, equality holds precisely for
$q(\zeta)=c(\zeta-\beta)^d$ with $c\neq0$ and $|\beta|=1$.
\end{lemma}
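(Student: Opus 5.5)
Factor $q(\zeta)=c\prod_{i=1}^{d'}(\zeta-\beta_i)$ with $d'=\deg q\leq d$. Both sides of \eqref{eq:mahler-bound} are homogeneous of degree two in $c$, so we may take $c=1$. By Jensen's formula,
\[
 \mathfrak M(q)=\prod_{i}\max(1,|\beta_i|).
\]
The plan has two steps. First, reduce to the case in which every zero lies in the closed disc. Second, bound the coefficients of a polynomial with all zeros in $\overline{\D}$ by those of $(\zeta+1)^{d'}$, and then sum squares.

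\emph{Reduction.} For each $|\beta_i|>1$, replace the factor $\zeta-\beta_i$ by $\overline{\beta_i}\,(\zeta-1/\overline{\beta_i})$. This is the standard Blaschke reflection: on $\T$ one has
\[
 |\zeta-\beta|=|\beta|\,|\zeta-1/\overline\beta|,
\]
so $|q|$ is unchanged on $\T$, and hence $\|q\|_2$ is unchanged. The Mahler measure is also unchanged, since it depends only on $|q|$ on $\T$. After this step $q=C\tilde q$ with $\tilde q$ monic of degree $d'$, all zeros in $\overline{\D}$, and $|C|=\mathfrak M(q)$. It therefore suffices to show $\|\tilde q\|_2^2\leq\binom{2d}{d}$ for such $\tilde q$.

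\emph{Coefficient bound.} Write $\tilde q(\zeta)=\sum_k a_k\zeta^k$. Then $a_k$ is, up to sign, the elementary symmetric function $e_{d'-k}(\beta_1,\dots,\beta_{d'})$. Since each $|\beta_i|\leq1$,
\[
 |a_k|\leq\binom{d'}{k}.
\]
Summing squares gives
\[
 \|\tilde q\|_2^2\leq\sum_k\binom{d'}{k}^2=\binom{2d'}{d'}\leq\binom{2d}{d},
\]
using Vandermonde's identity and the monotonicity of central binomial coefficients in $d'$.

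\emph{Equality when $\deg q=d$.} Here $d'=d$, so equality forces $|e_{d-k}(\beta)|=\binom dk$ for every $k$. Taking $k=d-1$ gives $\bigl|\sum_i\beta_i\bigr|=d$ with all $|\beta_i|\leq1$ (the reflected zeros). This forces $\beta_i=\beta$ for all $i$, with a single $|\beta|=1$. The reflection is the identity on unit-modulus zeros, and every reflected zero has modulus strictly less than one. Hence the original $q$ has all zeros equal to that same $\beta\in\T$, so $q=c(\zeta-\beta)^d$. Conversely, for $q=c(\zeta-\beta)^d$ we have $\mathfrak M(q)=|c|$ and $\|q\|_2^2=|c|^2\binom{2d}{d}$, so equality holds.

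The main obstacle is bookkeeping in the reduction. One must check that Jensen's formula applies correctly to zeros on $\T$; the integral still converges and gives $\log 1=0$ for those factors. One must also check that the equality characterization survives the reflection. The Vandermonde step and the triangle inequality are routine.
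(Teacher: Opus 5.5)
Your proof is correct and follows essentially the same route as the paper: Jensen's formula, the coefficient bound $|\widehat q(k)|\le\binom{d}{k}\mathfrak M(q)$ via elementary symmetric functions, Parseval with Vandermonde, and the degree-$(d-1)$ coefficient for the equality case. The differences are cosmetic. The paper skips the Blaschke reflection by bounding each $k$-fold root product directly by $\prod_j\max\{1,|\beta_j|\}$, and it pads with zero roots instead of using monotonicity of $\binom{2d}{d}$. In the equality case it forces the roots onto $\T$ through the leading and constant coefficients, whereas you use the fact that reflected zeros lie strictly inside $\D$.
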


\begin{proof}
Write $q(\zeta)=c\prod_{j=1}^d(\zeta-\beta_j)$, allowing zero roots if the
original degree is smaller than $d$.  Jensen's formula gives
\[
 \mathfrak M(q)=|c|\prod_{j=1}^d\max\{1,|\beta_j|\}.
\]
Every coefficient of $q$ is a sum of $\binom{d}{k}$ products of $k$ roots,
and each such product, after multiplication by $|c|$, has modulus at most
$\mathfrak M(q)$.  Hence
\[
 |\widehat q(k)|\leq\binom{d}{k}\mathfrak M(q)
 \quad(0\leq k\leq d).
\]
Parseval's identity and Vandermonde's identity yield
\[
 \|q\|_2^2\leq\mathfrak M(q)^2
 \sum_{k=0}^d\binom{d}{k}^2
 =\binom{2d}{d}\mathfrak M(q)^2.
\]
If equality holds and $\deg q=d$, equality in the leading and constant
coefficient bounds forces all roots to lie on $\T$.  Equality for the
coefficient of degree $d-1$ then forces the $d$ unit roots to have a common
argument.  The converse is immediate from
$\sum_k\binom dk^2=\binom{2d}d$.
\end{proof}

\begin{theorem}[A degree bound]\label{thm:degree-bound}
If $p$ is homogeneous and $d=\deg p(\cdot,1)$, then
\begin{equation}\label{eq:degree-final}
 \|X_{[p]}\|_{\HS}^2
 \leq\log\frac{\|p(\cdot,1)\|_2^2}{\mathfrak M(p(\cdot,1))^2}
 \leq\log\binom{2d}{d}<2d\log2.
\end{equation}
\end{theorem}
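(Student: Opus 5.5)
The plan is to chain three earlier results: the logarithmic trace identity \eqref{eq:logtrace}, Lemma~\ref{lem:mahler-bound}, and an elementary bound on central binomial coefficients. Throughout, write $q=p(\cdot,1)$.

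For the first inequality, Theorem~\ref{thm:cross} gives that the non-zero singular values of $X_{[p]}$ are $|\alpha_n|$. Hence $\|X_{[p]}\|_{\HS}^2=\sum_n|\alpha_n|^2$, which is exactly the $r=1$ term of the left side of \eqref{eq:logtrace}. Every other term $\frac1r\|X\|_{\Sr_{2r}}^{2r}$ is non-negative. So
\[
 \|X_{[p]}\|_{\HS}^2\leq\sum_{r\geq1}\frac1r\|X\|_{\Sr_{2r}}^{2r}=\log\frac{\|q\|_2^2}{\mathfrak M(q)^2}.
\]
Equivalently, one can use $x\leq-\log(1-x)$ termwise together with Theorem~\ref{thm:szego-mahler}.

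For the second inequality, apply Lemma~\ref{lem:mahler-bound} to $q$, whose degree is $d$ by hypothesis. It gives $\|q\|_2^2/\mathfrak M(q)^2\leq\binom{2d}{d}$, and we take logarithms. Note that $\mathfrak M(q)>0$ since $q\neq0$. The degenerate case $d=0$ deserves a check. There $q$ is a non-zero constant, so $\mu=m$, all $\alpha_n=0$, and every term vanishes. In that case the final strict inequality $\log 1<0$ fails. So the strict inequality must be read for $d\geq1$, which is implicitly the setting when $p$ is not a monomial in $w$ alone. I would state this caveat explicitly.

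For the last inequality with $d\geq1$, use $\binom{2d}{d}<\sum_{k=0}^{2d}\binom{2d}{k}=4^d$. This is strict because for $d\geq1$ there are other positive terms in the sum. Taking logarithms gives $\log\binom{2d}{d}<2d\log 2$.

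None of these steps is a real obstacle. The only point needing care is the bookkeeping of the edge case $d=0$ and the fact that $\deg q$ may be smaller than the total degree of $p$. This is harmless, since Lemma~\ref{lem:mahler-bound} is applied with the actual degree $d=\deg q$.
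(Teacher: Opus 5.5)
Your proposal is correct and follows the paper's proof: the termwise bound $x\le-\log(1-x)$ combined with the Szeg\H{o}--Mahler product (your $r=1$ term of \eqref{eq:logtrace} is the same estimate), then Lemma~\ref{lem:mahler-bound}, then $\binom{2d}{d}<4^d$. Your caveat about $d=0$ is well taken: there the final strict inequality reads $0<0$, so the paper's remark that this case is ``trivial'' really covers only the first two inequalities.
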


\begin{proof}
By $x\leq-\log(1-x)$ and Theorem~\ref{thm:szego-mahler},
\[
 \|X_{[p]}\|_{\HS}^2
 =\sum_{n\geq0}|\alpha_n|^2
 \leq-\sum_{n\geq0}\log(1-|\alpha_n|^2)
 =\log\frac{\|q\|_2^2}{\mathfrak M(q)^2}.
\]
Lemma~\ref{lem:mahler-bound} proves the second inequality, and
$\binom{2d}{d}<4^d$ for $d\geq1$ proves the last one.  The case $d=0$ is
trivial.
\end{proof}

\section{The circular Jacobi family and an unboundedness theorem}
\label{sec:unbounded}

Consider
\[
 p_N(z,w)=(z-w)^N,\qquad N\geq1.
\]
The corresponding unnormalized weight is the pure Fisher--Hartwig weight
$|1-\zeta|^{2N}\dd m(\zeta)$.  Its finite Toeplitz determinants are known
exactly \cite{BottcherSilbermannFH,BottcherWidom}:
\begin{equation}\label{eq:FH-det}
 \widetilde D_n=
 \frac{G(n+1)G(n+2N+1)G(N+1)^2}
 {G(n+N+1)^2G(2N+1)},\qquad n\geq0,
\end{equation}
where $G$ is the Barnes $G$-function.  Normalizing the weight only multiplies
$\widetilde D_n$ by a scalar to the power $n$ and therefore does not change
successive determinant ratios.  The corresponding circular Jacobi
Verblunsky coefficients are
\begin{equation}\label{eq:circular-jacobi-alpha}
 \alpha_n=-\frac{N}{n+N+1},\qquad n\geq0;
\end{equation}
see, for example, \cite[Example~8.2.5]{SimonOPUC1} and
\cite{JangSong}.

\begin{theorem}[Unbounded homogeneous core norms]\label{thm:unbounded}
Let $\calM_N=[(z-w)^N]$.  Then
\begin{align}
 \spec(C_{\calM_N})\setminus\{0\}
 &=\left\{1,\ \pm\frac{N}{N+1},\ \pm\frac{N}{N+2},\ldots\right\},
 \label{eq:spectrum-pN}\\
 \|X_{\calM_N}\|_{\HS}^2
 &=N^2\sum_{j=N+1}^{\infty}\frac1{j^2},\label{eq:cross-pN}\\
 \|C_{\calM_N}\|_{\HS}^2
 &=1+2N^2\sum_{j=N+1}^{\infty}\frac1{j^2}\label{eq:core-pN}\\
 &=2N+\frac1{3N}-\frac1{15N^3}+O(N^{-5}).\label{eq:core-asymptotic}
\end{align}
In particular,
\[
 \sup\{\|C_{[p]}\|_{\HS}:0\neq p\in\C[z,w]
       \text{ homogeneous}\}=\infty.
\]
Thus Problem~10 of \cite{YangSurvey} has a negative answer.
\end{theorem}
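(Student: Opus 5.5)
The plan is to feed the circular Jacobi Verblunsky coefficients \eqref{eq:circular-jacobi-alpha} into the general dictionary, and then carry out one explicit asymptotic expansion. First, $p_N(\zeta,1)=(\zeta-1)^N$, so $|q|^2=|1-\zeta|^{2N}$. The normalized measure $\mu$ of \eqref{eq:mu-def} is therefore the normalized circular Jacobi weight. Normalization does not change the Verblunsky coefficients: they depend only on the ratios $D_{n-1}D_{n+1}/D_n^2$ of Proposition~\ref{prop:desnanot}, and these are invariant under $D_n\mapsto c^nD_n$. Hence $|\alpha_n|=N/(n+N+1)$ for $n\geq0$. If one does not want to rely on the cited value, it can be verified from \eqref{eq:FH-det}. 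Using $G(z+1)=\Gamma(z)G(z)$, the ratio $D_{n-1}D_{n+1}/D_n^2$ telescopes to
\[
\frac{n(n+2N)}{(n+N)^2}=1-\frac{N^2}{(n+N)^2},
\]
and \eqref{eq:alpha-D} then gives $|\alpha_{n-1}|=N/(n+N)$.

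Next, apply Theorem~\ref{thm:core-spectrum}. Its non-zero spectrum is $\{1\}\cup\{\pm N/(n+N+1)\}$, and reindexing by $j=n+N+1\geq N+1$ gives \eqref{eq:spectrum-pN}. Theorem~\ref{thm:cross}, or equivalently Corollary~\ref{cor:sigma01}, gives $\|X_{\calM_N}\|_{\HS}^2=\sum_n|\alpha_n|^2=N^2\sum_{j\geq N+1}j^{-2}$, which is \eqref{eq:cross-pN}. The identity $\|C\|_{\HS}^2=1+2\sum|\alpha_n|^2$ then gives \eqref{eq:core-pN}.

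The main remaining work is the asymptotic \eqref{eq:core-asymptotic}, which reduces to expanding the trigamma tail $\psi'(N+1)=\sum_{j\geq N+1}j^{-2}$. I would use the Euler--Maclaurin (or standard asymptotic) expansion
\[
\psi'(x)=\frac1x+\frac1{2x^2}+\frac1{6x^3}-\frac1{30x^5}+O(x^{-7}),
\]
together with $\psi'(N+1)=\psi'(N)-N^{-2}$. This gives
\[
\sum_{j\geq N+1}\frac1{j^2}=\frac1N-\frac1{2N^2}+\frac1{6N^3}-\frac1{30N^5}+O(N^{-7}).
\]
Multiplying by $2N^2$ and adding $1$, the constant terms $1-1$ cancel, leaving $2N+\tfrac1{3N}-\tfrac1{15N^3}+O(N^{-5})$. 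The only delicate point is bookkeeping: check that the error term is genuinely $O(N^{-7})$ before multiplying by $N^2$, using the alternating-remainder bound for the Bernoulli series of $\psi'$.

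Finally, the norms $\|C_{\calM_N}\|_{\HS}$ grow like $\sqrt{2N}$, so the supremum over homogeneous generators is infinite. This answers Problem~10 of \cite{YangSurvey} negatively.
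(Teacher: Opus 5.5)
Your proposal is correct and follows the paper's proof. You insert $|\alpha_n|=N/(n+N+1)$ (optionally rechecked via $G(x+1)=\Gamma(x)G(x)$, which gives the ratio $n(n+2N)/(n+N)^2$) into Theorems~\ref{thm:cross} and~\ref{thm:core-spectrum}, and then expand the tail $\sum_{j\geq N+1}j^{-2}$ by Euler--Maclaurin; writing this tail as $\psi'(N+1)=\psi'(N)-N^{-2}$ is the same expansion, and your coefficients $1/N-1/(2N^2)+1/(6N^3)-1/(30N^5)$ and the cancellation of the constant term check out.
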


\begin{proof}
Equations \eqref{eq:spectrum-pN}--\eqref{eq:core-pN} follow immediately
from \eqref{eq:circular-jacobi-alpha}, Theorems~\ref{thm:cross} and
\ref{thm:core-spectrum}.  For completeness, the modulus in
\eqref{eq:circular-jacobi-alpha} also follows directly from
\eqref{eq:FH-det} and \eqref{eq:alpha-D}, because $G(x+1)=\Gamma(x)G(x)$
gives
\[
 \frac{\widetilde D_{n+1}\widetilde D_{n-1}}{\widetilde D_n^2}
 =\frac{n(n+2N)}{(n+N)^2}
 =1-\frac{N^2}{(n+N)^2},\qquad n\geq1.
\]
Finally, Euler--Maclaurin summation yields
\[
 \sum_{j=N+1}^{\infty}j^{-2}
 =\frac1N-\frac1{2N^2}+\frac1{6N^3}
  -\frac1{30N^5}+O(N^{-7}).
\]
Substitution into \eqref{eq:core-pN} proves
\eqref{eq:core-asymptotic} and the unboundedness assertion.
\end{proof}

\begin{remark}
The exact Fredholm determinant in this family is
\[
 \det(I-X_{\calM_N}^*X_{\calM_N})
 =\prod_{j=N+1}^{\infty}\left(1-\frac{N^2}{j^2}\right)
 =\binom{2N}{N}^{-1}.
\]
Indeed, $\mathfrak M((\zeta-1)^N)=1$ and
$\|(\zeta-1)^N\|_2^2=\binom{2N}{N}$, so this is also
Corollary~\ref{cor:fredholm}.  Lemma~\ref{lem:mahler-bound} is sharp on this
family, whereas the passage from the Fredholm determinant to the
Hilbert--Schmidt norm is necessarily strict for $N\geq1$.
\end{remark}

\section{Homogeneous quotient modules and compressed cross-commutators}
\label{sec:quotient}

We now pass from restrictions to compressions.  This distinction is
essential: the compressed coordinate shifts on a quotient need not be
isometries, and essential normality of homogeneous quotient modules is a
separate geometric question; see \cite{WangZhao2018,WangZhaoZhu,YangSurvey}.
Nevertheless, for the single cross-commutator considered here, the same OPUC
model gives a complete singular-value formula.

Let $p$ be homogeneous of total degree $D\geq1$, set
\[
 \mathcal Q_p=\HH\ominus[p],
 \qquad
 S_z=P_{\mathcal Q_p}M_z|_{\mathcal Q_p},\quad
 S_w=P_{\mathcal Q_p}M_w|_{\mathcal Q_p},
\]
and write
\begin{equation}\label{eq:q-endpoints}
 q(\zeta)=p(\zeta,1)=\sum_{j=0}^Dq_j\zeta^j,
 \qquad
 a_q=\frac{|q_0|^2}{\|q\|_2^2},\quad
 b_q=\frac{|q_D|^2}{\|q\|_2^2}.
\end{equation}
The extreme coefficients are allowed to vanish.  Let $\kappa_n>0$ be the
leading coefficient of $\varphi_n$.  Thus
\begin{equation}\label{eq:kappa-product}
 \kappa_0=1,\qquad
 \kappa_n=\prod_{j=0}^{n-1}\rho_j^{-1}\quad(n\geq1).
\end{equation}

The following block factorization explains why the quotient formula differs
from Theorem~\ref{thm:cross}.

\begin{lemma}[Restriction--compression factorization]
\label{lem:restriction-compression}
Relative to $\HH=[p]\oplus\mathcal Q_p$, write
\begin{equation}\label{eq:ambient-block-shifts}
 M_z=\begin{pmatrix}R_z&D_z\\0&S_z\end{pmatrix},
 \qquad
 M_w=\begin{pmatrix}R_w&D_w\\0&S_w\end{pmatrix}.
\end{equation}
Then
\begin{equation}\label{eq:ABBA}
 [R_z^*,R_w]=D_wD_z^*,
 \qquad
 [S_z^*,S_w]=-D_z^*D_w.
\end{equation}
Moreover,
\[
 \Ran D_z\subset[p]\ominus z[p],
 \qquad
 \Ran D_w\subset[p]\ominus w[p].
\]
\end{lemma}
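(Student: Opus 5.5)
The plan is to use the fact that $M_z$ and $M_w$ are commuting isometries on $\HH$ that also doubly commute, $M_z^*M_w=M_wM_z^*$, and to read off the relevant blocks of these operator identities in the decomposition $\HH=[p]\oplus\mathcal Q_p$. Since $[p]$ is invariant, the block forms \eqref{eq:ambient-block-shifts} hold with $D_z=P_{[p]}M_z|_{\mathcal Q_p}$ and $D_w=P_{[p]}M_w|_{\mathcal Q_p}$. The adjoints are
\[
 M_z^*=\begin{pmatrix}R_z^*&0\\D_z^*&S_z^*\end{pmatrix},
 \qquad
 M_w^*=\begin{pmatrix}R_w^*&0\\D_w^*&S_w^*\end{pmatrix}.
\]

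For the two commutator identities, I will compute the product $M_z^*M_w$ block by block:
\[
 M_z^*M_w=\begin{pmatrix}R_z^*R_w & R_z^*D_w\\ D_z^*R_w & D_z^*D_w+S_z^*S_w\end{pmatrix},
\]
\[
 M_wM_z^*=\begin{pmatrix}R_wR_z^*+D_wD_z^* & D_wS_z^*\\ S_wD_z^* & S_wS_z^*\end{pmatrix}.
\]
Doubly commuting makes these two matrices equal. Comparing the $(1,1)$ entries gives $[R_z^*,R_w]=D_wD_z^*$, and comparing the $(2,2)$ entries gives $S_z^*S_w-S_wS_z^*=-D_z^*D_w$, which is \eqref{eq:ABBA}. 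This is exactly the ``$AB$ versus $BA$'' mechanism mentioned in the introduction.

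For the range inclusions, I will use that $M_z$ is an isometry, so $M_z^*M_z=I$. The $(1,2)$ block of this identity reads $R_z^*D_z=0$, so $\Ran D_z\subset\ker R_z^*=[p]\ominus z[p]$. The same argument with $M_w$ gives $\Ran D_w\subset[p]\ominus w[p]$.

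I see no genuine obstacle here. The only point requiring care is bookkeeping: the $(2,1)$ entries of $M_z^*$ and $M_w^*$ are $D_z^*$ and $D_w^*$, and the adjoint convention has to be applied consistently so that the $(2,2)$ comparison comes out as $S_z^*S_w-S_wS_z^*$ with the stated sign.
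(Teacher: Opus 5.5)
Your proposal is correct and follows the paper's own argument: you compare the $(1,1)$ and $(2,2)$ blocks of $M_z^*M_w=M_wM_z^*$ to get the two commutator identities, and the $(1,2)$ block of $M_z^*M_z=I$ (and likewise for $w$) to get $R_z^*D_z=0$, $R_w^*D_w=0$. Your block computations and signs are all correct, and writing the products out explicitly simply fills in the bookkeeping that the paper leaves implicit.
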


\begin{proof}
The ambient coordinate shifts doubly commute, so
$M_z^*M_w=M_wM_z^*$.  Comparing the upper-left and lower-right blocks in
\eqref{eq:ambient-block-shifts} gives \eqref{eq:ABBA}.  Comparing the
upper-right block in $M_z^*M_z=I$ gives $R_z^*D_z=0$, and the corresponding
identity for $w$ gives $R_w^*D_w=0$.  These are precisely the two range
inclusions.
\end{proof}

\begin{remark}[The $AB$ versus $BA$ principle]
The restricted and compressed cross-commutators are the two products $AB$
and $-BA$.  Their non-zero eigenvalues are related, but their singular values
need not agree.  In the homogeneous setting every degree block of $D_z$ and
$D_w$ has one-dimensional range.  This rank-one geometry is what makes the
singular values of $-BA$ explicitly computable.
\end{remark}

For $n\geq0$ let
\[
 \mathcal Q_{D+n-1}=\mathcal Q_p\cap H_{D+n-1},
\]
where $\mathcal Q_{D-1}=H_{D-1}$.  For $n\geq1$ one has
\[
 \mathcal Q_{D+n-1}=H_{D+n-1}\ominus pH_{n-1},
 \qquad \dim\mathcal Q_{D+n-1}=D.
\]
Put
\begin{equation}\label{eq:quotient-uv}
 u_n=D_z^*e_n=M_z^*e_n,
 \qquad
 v_n=D_w^*f_n=M_w^*f_n.
\end{equation}
The last equalities hold because $R_z^*e_n=R_w^*f_n=0$.

\begin{lemma}[Boundary defect norms]\label{lem:boundary-defects}
For every $n\geq0$,
\begin{equation}\label{eq:uv-norms}
 \|u_n\|^2=1-a_q\kappa_n^2,
 \qquad
 \|v_n\|^2=1-b_q\kappa_n^2.
\end{equation}
\end{lemma}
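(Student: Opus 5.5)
Since $M_z$ is an isometry on $\HH$, we have $M_zM_z^*=I-P_{\ker M_z^*}$, and $\ker M_z^*$ consists of the functions depending only on $w$, i.e.\ the closed span of $\{w^k:k\geq0\}$. Hence
\[
 \|u_n\|^2=\|M_z^*e_n\|^2=\|M_zM_z^*e_n\|^2=1-\|P_{\ker M_z^*}e_n\|^2 .
\]
Because $e_n\in H_{D+n}$ is homogeneous, its projection onto $\ker M_z^*$ is the single monomial term $\langle e_n,w^{D+n}\rangle w^{D+n}$. Thus $\|u_n\|^2=1-|\langle e_n,w^{D+n}\rangle|^2$. In the same way, $\|v_n\|^2=1-|\langle f_n,z^{D+n}\rangle|^2$, since $\ker M_w^*$ is spanned by powers of $z$.

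The problem therefore reduces to computing the pure-power coefficients of $e_n=U_n\varphi_n^*$ and $f_n=U_n\varphi_n$. By \eqref{eq:Un}, $U_nf=\|p\|^{-1}p(z,w)w^nf(z/w)$, whose dehomogenization in $\zeta=z/w$ is $\|q\|_2^{-1}q(\zeta)f(\zeta)$, using $\|p\|=\|q\|_2$. Under homogenization of degree $D+n$, the coefficient of $w^{D+n}$ is the constant term of $qf$, and the coefficient of $z^{D+n}$ is the $\zeta^{D+n}$ coefficient of $qf$.

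For $f=\varphi_n^*$, the constant term of $\varphi_n^*$ equals $\overline{\kappa_n}=\kappa_n$, because reversal sends the leading coefficient to the constant term with conjugation. So the constant term of $q\varphi_n^*$ is $q_0\kappa_n$, which gives
\[
 |\langle e_n,w^{D+n}\rangle|^2=\frac{|q_0|^2\kappa_n^2}{\|q\|_2^2}=a_q\kappa_n^2 .
\]
For $f=\varphi_n$, the top coefficient of $q\varphi_n$ is $q_D\kappa_n$, so $|\langle f_n,z^{D+n}\rangle|^2=b_q\kappa_n^2$. This yields \eqref{eq:uv-norms}. The case $n=0$ is consistent with these formulas, since $\kappa_0=1$ and $e_0=f_0=p/\|p\|$.

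The only delicate points are bookkeeping. First, one must check that the degree of $q$ really is $D$ in the sense used for the top coefficient; when $q_D=0$, the coefficient of $\zeta^{D+n}$ is $0=q_D\kappa_n$, so the formula still holds. Second, one must check that homogenization matches monomials correctly: $w^{D+n}$ corresponds to $\zeta^0$ and $z^{D+n}$ to $\zeta^{D+n}$. The equalities $u_n=M_z^*e_n$ and $v_n=M_w^*f_n$ were already noted in \eqref{eq:quotient-uv}, so no further work is needed there.
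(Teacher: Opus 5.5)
Your proposal is correct and follows essentially the same route as the paper. Both arguments dehomogenize $e_n,f_n$ to $q\varphi_n^*/\|q\|_2$ and $q\varphi_n/\|q\|_2$, observe that $M_z^*$ (resp.\ $M_w^*$) on $H_{D+n}$ only deletes the $w^{D+n}$ (resp.\ $z^{D+n}$) coefficient, and read those coefficients off as $q_0\kappa_n/\|q\|_2$ and $q_D\kappa_n/\|q\|_2$. Your explicit check of the degenerate cases $q_0=0$ or $q_D=0$ is a harmless addition.
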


\begin{proof}
Identify $H_{D+n}$ with $\mathcal P_{D+n}\subset H^2(\T)$ by the unitary
homogenization
\[
 J_{D+n}F(z,w)=w^{D+n}F(z/w).
\]
By Theorem~\ref{thm:wandering-bases},
\[
 e_n=J_{D+n}\!\left(\frac{q\varphi_n^*}{\|q\|_2}\right),
 \qquad
 f_n=J_{D+n}\!\left(\frac{q\varphi_n}{\|q\|_2}\right).
\]
On $H_{D+n}$, the kernel of $M_z^*$ is the line spanned by $w^{D+n}$.
The coefficient of this monomial in $e_n$ is
$q_0\varphi_n^*(0)/\|q\|_2=q_0\kappa_n/\|q\|_2$.
Since $e_n$ is a unit vector, deleting that coefficient gives
\[
 \|M_z^*e_n\|^2
 =1-\frac{|q_0|^2\kappa_n^2}{\|q\|_2^2}.
\]
Similarly, the kernel of $M_w^*$ in $H_{D+n}$ is spanned by $z^{D+n}$.
The coefficient of $z^{D+n}$ in $f_n$ is
$q_D\kappa_n/\|q\|_2$, which proves the second identity.
\end{proof}

\begin{theorem}[Singular values on the homogeneous quotient]
\label{thm:quotient-singular-values}
Let
\[
 Y_p=[S_z^*,S_w]\quad\text{on }\mathcal Q_p.
\]
The operator $Y_p$ vanishes on
$\bigoplus_{m=0}^{D-2}H_m$ (with the usual empty-sum convention).  The
subspaces $\mathcal Q_{D+n-1}$ reduce $Y_p$, and the restriction of $Y_p$ to
each of them has rank at most one.  Its only possible non-zero singular value
is
\begin{align}
 \beta_0(p)&=\sqrt{(1-a_q)(1-b_q)},\label{eq:beta0}\\
 \beta_n(p)&=|\alpha_{n-1}|
 \sqrt{(1-a_q\kappa_n^2)(1-b_q\kappa_n^2)},
 \qquad n\geq1.\label{eq:betan}
\end{align}
Consequently, after zero terms are removed,
\[
 s(Y_p)=\{\beta_0(p),\beta_1(p),\beta_2(p),\ldots\}
\]
as a multiset before decreasing rearrangement.
\end{theorem}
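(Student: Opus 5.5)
The plan is to start from Lemma~\ref{lem:restriction-compression}, which gives $Y_p=-D_z^*D_w$, and then compute this product one homogeneous degree at a time. The degree structure is as follows. $M_z$ and $M_w$ raise total degree by one. The submodule $[p]$ and the quotient $\mathcal Q_p$ are both graded, so $D_z=P_{[p]}M_z|_{\mathcal Q_p}$ maps $\mathcal Q\cap H_m$ into $[p]\cap H_{m+1}$.

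By Lemma~\ref{lem:restriction-compression}, $\Ran D_z\subset[p]\ominus z[p]$. In degree $D+n$ this space is spanned by $e_n$ (Theorem~\ref{thm:wandering-bases}). Hence, on $\mathcal Q_{D+n-1}$ (with $n\ge0$),
\[
D_z h=\langle D_zh,e_n\rangle e_n=\langle h,D_z^*e_n\rangle e_n=\langle h,u_n\rangle e_n .
\]
The same argument gives $D_wh=\langle h,v_n\rangle f_n$. For $m\le D-2$, the space $H_{m+1}$ meets $[p]$ trivially, so $D_z$ and $D_w$ vanish there. Since $D_z^*$ lowers degree by one, $D_z^*D_w$ maps $\mathcal Q_{D+n-1}$ into itself. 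This gives the reducing subspaces and shows that $Y_p$ vanishes on $\bigoplus_{m\le D-2}H_m$.

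On $\mathcal Q_{D+n-1}$ we then have
\[
Y_p h=-\langle h,v_n\rangle\, D_z^*f_n .
\]
Now $D_z^*=P_{\mathcal Q}M_z^*|_{[p]}$. Because $D_z^*$ vanishes on $z[p]$ and on the components of $[p]\ominus z[p]$ orthogonal to $e_n$ within this degree, we get $D_z^*f_n=\langle f_n,e_n\rangle u_n$. So the restriction is rank one:
\[
Y_p|_{\mathcal Q_{D+n-1}}=-\langle f_n,e_n\rangle\, u_n\otimes v_n ,
\]
with singular value $|\langle f_n,e_n\rangle|\,\|u_n\|\,\|v_n\|$. By the proof of Theorem~\ref{thm:schmidt}, $|\langle f_n,e_n\rangle|$ equals $1$ for $n=0$ and $|\alpha_{n-1}|$ for $n\ge1$. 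Lemma~\ref{lem:boundary-defects} gives $\|u_n\|\,\|v_n\|=\sqrt{(1-a_q\kappa_n^2)(1-b_q\kappa_n^2)}$, and $\kappa_0=1$ yields $\beta_0$.

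The step I expect to be delicate is the decomposition $f_n=\langle f_n,e_n\rangle e_n+g$ with $g\in z[p]\cap H_{D+n}$. Within $pH_n$ the orthogonal complement of $e_n$ is exactly $zpH_{n-1}$, so this decomposition holds. Then $D_z^*g=0$ because $D_z^*=P_{\mathcal Q}M_z^*$ and $M_z^*(zx)=x\in[p]$.

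Finally, the multiset statement follows because the spaces $\mathcal Q_{D+n-1}$ for $n\ge0$, together with the lower degrees, are mutually orthogonal and exhaust $\mathcal Q_p$. The singular values of $Y_p$ are therefore the union of the blockwise ones. The case $D=1$ needs no separate treatment, since then the sum over $m\le D-2$ is empty.
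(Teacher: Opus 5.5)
Your proposal is correct and follows essentially the same route as the paper. Both arguments factor $Y_p=-D_z^*D_w$, write the degree blocks as the rank-one operators $D_z=e_n\otimes u_n$ and $D_w=f_n\otimes v_n$, and read off the singular value $|\langle f_n,e_n\rangle|\,\|u_n\|\,\|v_n\|$ from the overlap and boundary-defect lemmas. Your explicit decomposition $f_n=\langle f_n,e_n\rangle e_n+g$ with $g\in zpH_{n-1}$ simply spells out the adjoint computation $D_z^*|_{[p]\cap H_{D+n}}=u_n\otimes e_n$, which the paper uses implicitly.
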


\begin{proof}
Both products in \eqref{eq:ABBA} preserve total degree.  By
Lemma~\ref{lem:restriction-compression}, the degree-$(D+n)$ ranges of $D_z$
and $D_w$ are contained in the lines $\C e_n$ and $\C f_n$, respectively.
If $m<D-1$, multiplication carries $H_m$ into $H_{m+1}\perp[p]$, so both
$D_z$ and $D_w$ vanish on $H_m$ and hence so does $Y_p$.
Hence, on $\mathcal Q_{D+n-1}$,
\[
 D_z=e_n\otimes u_n,\qquad D_w=f_n\otimes v_n.
\]
It follows that
\begin{equation}\label{eq:quotient-rank-one-block}
 Y_p|_{\mathcal Q_{D+n-1}}
 =-\langle f_n,e_n\rangle\,u_n\otimes v_n.
\end{equation}
At $n=0$ one has $e_0=f_0$.  For $n\geq1$,
Lemma~\ref{lem:overlaps} gives
\[
 |\langle f_n,e_n\rangle|=|\alpha_{n-1}|.
\]
The singular value of $u\otimes v$ is $\|u\|\|v\|$.
Lemma~\ref{lem:boundary-defects} now proves
\eqref{eq:beta0}--\eqref{eq:betan}.
\end{proof}

\begin{remark}[What the three factors measure]
The overlap $|\alpha_{n-1}|$ measures the angle between the two wandering
lines in degree $D+n$.  The factors
$\sqrt{1-a_q\kappa_n^2}$ and $\sqrt{1-b_q\kappa_n^2}$ measure their coupling
to the preceding quotient block through $M_z^*$ and $M_w^*$, respectively.
Thus the quotient remembers the choice of polynomial spectral factor through
the two extreme coefficients of $q$; the Verblunsky sequence alone need not
determine its compressed cross-commutator.
\end{remark}

\begin{corollary}[Hilbert--Schmidt formula and degree estimate]
\label{cor:quotient-HS}
The quotient cross-commutator $Y_p$ is Hilbert--Schmidt and
\begin{equation}\label{eq:quotient-HS-exact}
 \begin{split}
 \|Y_p\|_{\HS}^2={}&(1-a_q)(1-b_q)\\
 &+\sum_{n=1}^{\infty}|\alpha_{n-1}|^2
 (1-a_q\kappa_n^2)(1-b_q\kappa_n^2).
 \end{split}
\end{equation}
Moreover,
\begin{equation}\label{eq:quotient-HS-bound}
 \|Y_p\|_{\HS}^2
 \leq1+\sum_{n\geq0}|\alpha_n|^2
 \leq1+\log\frac{\|q\|_2^2}{\mathfrak M(q)^2}
 \leq1+\log\binom{2D}{D}<1+2D\log2.
\end{equation}
\end{corollary}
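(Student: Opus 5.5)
The exact formula \eqref{eq:quotient-HS-exact} is read off from Theorem~\ref{thm:quotient-singular-values}. The Hilbert--Schmidt norm squared is the sum of the squared singular values. By that theorem these are the numbers $\beta_n(p)^2$, one for each reducing block $\mathcal Q_{D+n-1}$, and $Y_p$ vanishes on the remaining low-degree summands. Summing $\beta_0(p)^2+\sum_{n\geq1}\beta_n(p)^2$ gives \eqref{eq:quotient-HS-exact}. The membership $Y_p\in\Sr_2$ then follows once the series is shown to be finite, which the bound below does.

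For the first inequality in \eqref{eq:quotient-HS-bound}, each factor $1-a_q\kappa_n^2$ and $1-b_q\kappa_n^2$ lies in $[0,1]$. Indeed, by Lemma~\ref{lem:boundary-defects} they equal $\|u_n\|^2$ and $\|v_n\|^2$, where $u_n=M_z^*e_n$ and $v_n=M_w^*f_n$. Since $M_z^*,M_w^*$ are contractions and $e_n,f_n$ are unit vectors, both norms lie in $[0,1]$. Hence $\beta_0(p)^2\leq1$ and $\beta_n(p)^2\leq|\alpha_{n-1}|^2$ for $n\geq1$. Reindexing gives $\|Y_p\|_{\HS}^2\leq1+\sum_{n\geq0}|\alpha_n|^2$, and this sum is finite by Corollary~\ref{cor:sigma01}.

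The remaining inequalities are those of Theorem~\ref{thm:degree-bound}. First, $\sum|\alpha_n|^2\leq-\sum\log(1-|\alpha_n|^2)$, and the right side equals $\log(\|q\|_2^2/\mathfrak M(q)^2)$ by Theorem~\ref{thm:szego-mahler}. Next, Lemma~\ref{lem:mahler-bound} applies with $d=D$, since $\deg q\leq D$ and the lemma allows degree at most $d$; it bounds the logarithm by $\log\binom{2D}{D}$. Finally, $\binom{2D}{D}<4^D$ for $D\geq1$ gives the strict last inequality.

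No step here is a real obstacle. The only point needing care is that $\deg q$ may be strictly less than $D$ when $q_D=0$, and this is why Lemma~\ref{lem:mahler-bound} is used in its ``degree at most $d$'' form.
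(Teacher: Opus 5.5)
Your proposal is correct and follows essentially the same route as the paper: sum the squared singular values of the mutually orthogonal rank-one blocks from Theorem~\ref{thm:quotient-singular-values}, bound the two defect factors by one via Lemma~\ref{lem:boundary-defects}, and then reuse the chain of inequalities from the proof of Theorem~\ref{thm:degree-bound}. Your remark that Lemma~\ref{lem:mahler-bound} must be applied in its ``degree at most $d$'' form, because $\deg q$ may be smaller than $D$, is precisely what the paper compresses into the phrase ``with $\deg q\leq D$''.
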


\begin{proof}
Equation \eqref{eq:quotient-HS-exact} follows by summing the mutually
orthogonal rank-one blocks in Theorem~\ref{thm:quotient-singular-values}.
The two factors in every summand lie in $[0,1]$ by
Lemma~\ref{lem:boundary-defects}.  The remaining inequalities are the proof
of Theorem~\ref{thm:degree-bound}, with $\deg q\leq D$.
\end{proof}

\begin{example}[Monomial generators]\label{ex:quotient-monomial}
If $p=z^rw^{D-r}$, then $\mu$ is Haar measure and every Verblunsky
coefficient vanishes.  If $r=0$ or $r=D$, one of $a_q,b_q$ equals one, and
Theorem~\ref{thm:quotient-singular-values} gives
$[S_z^*,S_w]=0$.  By contrast, if $0<r<D$, then $a_q=b_q=0$; the degree
$D-1$ block has singular value $\beta_0=1$, and all later blocks vanish.
Thus the compressed cross-commutator has rank one and norm one.  This sharp
dichotomy is already visible for $p=zw$: on the span of $z,w$ in the
degree-one quotient block, the commutator sends $z$ to $-w$ and annihilates
the orthogonal line.

Even in the pure-power case, cross-commutation does not assert that the two
compressed shifts are essentially normal: the two self-commutators are
separate conditions.  For example, $[z^2]^\perp$ is not essentially normal;
compare \cite{WangZhaoZhu}.
\end{example}

\begin{example}[The diagonal and the Bergman shift]\label{ex:z-w-quotient}
For $p=z-w$, the quotient $[z-w]^\perp$ identifies both compressed shifts
with the Bergman shift; see \cite{GuoSunZhengZhong,YangSurvey}.  Here
\[
 \alpha_n=-\frac1{n+2},\qquad
 \kappa_n^2=\frac{2(n+1)}{n+2},\qquad a_q=b_q=\frac12.
\]
Theorem~\ref{thm:quotient-singular-values} gives
\[
 \beta_0=\frac12,\qquad
 \beta_n=\frac1{(n+1)(n+2)}\quad(n\geq1).
\]
These are exactly the diagonal entries of the Bergman self-commutator, and
\begin{equation}\label{eq:Bergman-HS-value}
 \|[S_z^*,S_w]\|_{\HS}^2
 =\frac14+\sum_{n=1}^{\infty}\frac1{(n+1)^2(n+2)^2}
 =\frac{\pi^2}{3}-3.
\end{equation}
\end{example}

The next family proves that the degree dependence in
\eqref{eq:quotient-HS-bound} cannot be removed.

\begin{theorem}[No uniform quotient bound]\label{thm:quotient-unbounded}
Let $p_N=(z-w)^N$ and let $Y_N$ denote the compressed cross-commutator on
$\mathcal Q_{p_N}$.  Put
\begin{equation}\label{eq:t-nN}
 t_{n,N}=\frac{(n+N)!^2}{n!(n+2N)!}.
\end{equation}
Then
\begin{align}
 \beta_{0,N}&=1-\binom{2N}{N}^{-1},\label{eq:beta0N}\\
 \beta_{n,N}&=\frac{N}{n+N}(1-t_{n,N}),\qquad n\geq1,
 \label{eq:betanN}
\end{align}
are all the non-zero singular values of $Y_N$.  Furthermore,
\begin{equation}\label{eq:quotient-linear-bounds}
 \frac N4\left(1-\left(\frac23\right)^N\right)^2
 \leq\|Y_N\|_{\HS}^2<N+1,
\end{equation}
and, more precisely,
\begin{equation}\label{eq:quotient-HS-asymptotic}
 \lim_{N\to\infty}\frac{\|Y_N\|_{\HS}^2}{N}=1.
\end{equation}
In particular, homogeneous quotient cross-commutators have no uniform
Hilbert--Schmidt bound.
\end{theorem}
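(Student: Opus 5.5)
The plan is to specialize Theorem~\ref{thm:quotient-singular-values} to $q(\zeta)=(\zeta-1)^N$, where every ingredient is explicit. Up to sign $q_0=\pm1$ and $q_N=1$, and $\|q\|_2^2=\binom{2N}{N}$. Hence
\[
 a_q=b_q=\binom{2N}{N}^{-1}.
\]
Substituting this into \eqref{eq:beta0} immediately gives $\beta_{0,N}=1-\binom{2N}{N}^{-1}$, which is \eqref{eq:beta0N}.

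For $n\geq1$, I will compute $\kappa_n^2$ from \eqref{eq:kappa-product} and the circular Jacobi coefficients \eqref{eq:circular-jacobi-alpha}. Since $\rho_j^2=1-N^2/(j+N+1)^2$, the product telescopes:
\[
 \kappa_n^2=\prod_{j=N+1}^{n+N}\frac{j^2}{(j-N)(j+N)}
 =\frac{((n+N)!/N!)^2}{n!\,(n+2N)!/(2N)!}
 =\binom{2N}{N}t_{n,N}.
\]
Therefore $a_q\kappa_n^2=b_q\kappa_n^2=t_{n,N}$. Together with $|\alpha_{n-1}|=N/(n+N)$, formula \eqref{eq:betan} becomes \eqref{eq:betanN}. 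This step is pure bookkeeping with factorials, and I expect it to be the only place where care is needed.

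For the bounds, I will rewrite
\[
 t_{n,N}=\prod_{j=1}^N\frac{n+j}{n+N+j}\in(0,1).
\]

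\emph{Upper bound.} Using $\beta_{0,N}<1$ and $1-t_{n,N}<1$, I get
\[
 \|Y_N\|_{\HS}^2<1+N^2\sum_{j\geq N+1}j^{-2}<1+N^2\cdot\frac1N=N+1.
\]
Alternatively, this follows from \eqref{eq:quotient-HS-bound} together with \eqref{eq:cross-pN}.

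\emph{Lower bound.} For $1\leq n\leq N$, every factor of $t_{n,N}$ is at most $(n+N)/(n+2N)\leq 2/3$, so $t_{n,N}\leq(2/3)^N$. Keeping only the terms with $1\leq n\leq N$ gives
\[
 \|Y_N\|_{\HS}^2\geq\Bigl(1-\bigl(\tfrac23\bigr)^N\Bigr)^2\sum_{n=1}^N\frac{N^2}{(n+N)^2}
 \geq\Bigl(1-\bigl(\tfrac23\bigr)^N\Bigr)^2\,N\cdot\frac{N^2}{(2N)^2},
\]
which is the left-hand inequality in \eqref{eq:quotient-linear-bounds}.

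\emph{Limit.} The upper bound already gives $\limsup_{N\to\infty}\|Y_N\|_{\HS}^2/N\leq1$. For the matching lower bound, fix $K\geq1$ and restrict to $1\leq n\leq KN$. There every factor of $t_{n,N}$ is at most $(K+1)/(K+2)$, so $t_{n,N}\leq((K+1)/(K+2))^N\to0$. Meanwhile
\[
 \sum_{n=1}^{KN}\frac{N^2}{(n+N)^2}=N\Bigl(1-\frac1{K+1}\Bigr)+O(1).
\]
This gives $\liminf_{N\to\infty}\|Y_N\|_{\HS}^2/N\geq1-1/(K+1)$. Letting $K\to\infty$ proves \eqref{eq:quotient-HS-asymptotic}, and the absence of a uniform Hilbert--Schmidt bound follows.
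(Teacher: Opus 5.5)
Your proposal is correct and follows the paper's proof essentially step for step: the specialization of Theorem~\ref{thm:quotient-singular-values} with $a_q=b_q=\binom{2N}{N}^{-1}$, the telescoping identity $\kappa_n^2=\binom{2N}{N}t_{n,N}$, the bound $t_{n,N}\le(2/3)^N$ on the first $N$ blocks, and the tail-sum upper bound are all as in the paper. The only difference is in the limit \eqref{eq:quotient-HS-asymptotic}, where you replace the paper's Stirling-formula and Riemann-sum argument with the elementary uniform estimate $t_{n,N}\le\bigl((K+1)/(K+2)\bigr)^N$ for $1\le n\le KN$; this is simpler and supplies the needed uniformity directly.
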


\begin{proof}
Equation \eqref{eq:circular-jacobi-alpha} gives
$|\alpha_{n-1}|=N/(n+N)$ for $n\geq1$.  From
\eqref{eq:kappa-product},
\[
 \kappa_n^2
 =\prod_{j=0}^{n-1}\frac{(j+N+1)^2}{(j+1)(j+2N+1)}
 =\binom{2N}{N}\frac{(n+N)!^2}{n!(n+2N)!}.
\]
Since $|q_0|=|q_N|=1$ and
$\|q\|_2^2=\binom{2N}{N}$, Theorem
\ref{thm:quotient-singular-values} proves
\eqref{eq:beta0N}--\eqref{eq:betanN}.

For $1\leq n\leq N$,
\[
 t_{n,N}=\prod_{k=1}^N\frac{n+k}{n+N+k}
 \leq\left(\frac23\right)^N,
 \qquad \frac{N}{n+N}\geq\frac12.
\]
Summing the first $N$ non-constant blocks gives the lower bound in
\eqref{eq:quotient-linear-bounds}.  Conversely,
\[
 \beta_{n,N}\leq\frac{N}{n+N},
\]
so the integral test yields
\[
 \|Y_N\|_{\HS}^2
 \leq1+N^2\sum_{n=1}^{\infty}\frac1{(n+N)^2}<N+1.
\]

It remains to identify the sharp linear coefficient.  The last upper bound,
divided by $N$, has limit at most one.  Fix $L>0$.  Stirling's formula gives,
uniformly for $1\leq n\leq LN$,
\[
 t_{n,N}\longrightarrow0
\]
exponentially away from the harmless endpoint $n=0$; equivalently, for
$n/N\to x\in[0,L]$,
\[
 \frac1N\log t_{n,N}\longrightarrow
 2(x+1)\log(x+1)-x\log x-(x+2)\log(x+2)<0.
\]
Therefore the Riemann lower sum over $1\leq n\leq LN$ gives
\[
 \liminf_{N\to\infty}\frac{\|Y_N\|_{\HS}^2}{N}
 \geq\int_0^L\frac{\dd x}{(1+x)^2}=\frac{L}{L+1}.
\]
Letting $L\to\infty$ proves \eqref{eq:quotient-HS-asymptotic}.
\end{proof}

\section{Higher invariants and alternating CMV products}\label{sec:cmv}

For the canonical wandering bases in Theorem~\ref{thm:wandering-bases},
define, for $k\geq0$,
\begin{equation}\label{eq:sigma-k-def}
 \Sigma_k([p])=
 \sum_{m,n\geq0}|\langle w^ke_m,z^kf_n\rangle|^2.
\end{equation}
This agrees with $\Sigma_0$ and $\Sigma_1$ above and with the higher
invariants considered in \cite{LiuLuZu}.  Equality of total degrees forces
$m=n$, so if
\begin{equation}\label{eq:Fkn}
 F_k(n)=\langle\varphi_n^*,\zeta^k\varphi_n\rangle_\mu,
\end{equation}
then
\begin{equation}\label{eq:sigma-F}
 \Sigma_k([p])=\sum_{n\geq0}|F_k(n)|^2.
\end{equation}
In particular,
\[
 F_0(0)=1,\quad F_0(n)=-\alpha_{n-1}\ (n\geq1),
 \qquad F_1(n)=\alpha_n.
\]

We recall the CMV factorization in the convention appropriate to inner
products linear in the first variable.  Put
\[
 \Theta(\alpha)=
 \begin{pmatrix}
   \alpha&\rho\\
   \rho&-\overline\alpha
 \end{pmatrix},
 \qquad \rho=(1-|\alpha|^2)^{1/2},
\]
and define
\begin{align}
 \mathbf L&=\Theta(\alpha_0)\oplus\Theta(\alpha_2)
              \oplus\Theta(\alpha_4)\oplus\cdots,
 \label{eq:L-def}\\
 \mathbf M&=1\oplus\Theta(\alpha_1)\oplus\Theta(\alpha_3)
              \oplus\cdots,
 \qquad \mathbf C=\mathbf L\mathbf M.\label{eq:M-C-def}
\end{align}
The harmless transpose-conjugate change from some standard texts is caused
only by the convention on which inner-product variable is linear.

For clarity, the CMV and alternate CMV Laurent bases are
\begin{align*}
 \chi_{2m}&=\zeta^{-m}\varphi_{2m}^*,
 &\chi_{2m-1}&=\zeta^{-m+1}\varphi_{2m-1},\\
 x_{2m}&=\zeta^{-m}\varphi_{2m},
 &x_{2m-1}&=\zeta^{-m}\varphi_{2m-1}^*.
\end{align*}
The Szeg\H{o} recursions give the two block factors
\eqref{eq:L-def}--\eqref{eq:M-C-def}, and $\mathbf C$ is the matrix of
multiplication by $\zeta$ in the $\chi$-basis; see
\cite{CanteroMoralVelazquez,SimonCMV,SimonOPUC2}.

\begin{theorem}[CMV formula for the higher invariants]\label{thm:cmv-formula}
For every $k\geq1$,
\begin{align}
 F_k(2m)&=(\mathbf C^{k-1}\mathbf L)_{2m,2m},\label{eq:F-even}\\
 F_k(2m+1)&=(\mathbf M\mathbf C^{k-1})_{2m+1,2m+1}.
 \label{eq:F-odd}
\end{align}
Consequently,
\begin{equation}\label{eq:sigma-cmv}
 \Sigma_k([p])=
 \sum_{m\geq0}|(\mathbf C^{k-1}\mathbf L)_{2m,2m}|^2
 +\sum_{m\geq0}|(\mathbf M\mathbf C^{k-1})_{2m+1,2m+1}|^2.
\end{equation}
For fixed $k$, $F_k(n)$ depends only on
$\alpha_{n-k+1},\ldots,\alpha_{n+k-1}$, apart from the finite left boundary.
Moreover, $(F_k(n))_{n\geq0}\in\ell^2$.
\end{theorem}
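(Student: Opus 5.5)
The plan is to reduce both diagonal formulas to the definition of the CMV and alternate CMV bases. Multiplication by $\zeta$ is unitary on $L^2(\mu)$, and the Szeg\H{o} recursions \eqref{eq:szego}--\eqref{eq:szego-reversed} say that the two block factors act as change-of-basis matrices between $(\chi_j)$ and $(x_j)$. I will fix the convention so that, for all $i,j$,
\[
 \langle \zeta\chi_j,x_i\rangle_\mu=\mathbf L_{ij},\qquad
 \langle x_j,\chi_i\rangle_\mu=\mathbf M_{ij},
 \qquad\text{hence}\qquad
 \langle \zeta\chi_j,\chi_i\rangle_\mu=\mathbf C_{ij}.
\]
Rather than tracking transposes abstractly, I will calibrate these against the case $k=1$. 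There the formulas must reduce to $F_1(2m)=\mathbf L_{2m,2m}=\alpha_{2m}$ and $F_1(2m+1)=\mathbf M_{2m+1,2m+1}=\alpha_{2m+1}$, which agree with Lemma~\ref{lem:overlaps}. The remark after \eqref{eq:M-C-def} says the stated convention is the right one, and this check confirms which index is the row index.

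\emph{Even case.} Since $\varphi_{2m}^*=\zeta^m\chi_{2m}$ and $\varphi_{2m}=\zeta^m x_{2m}$, unitarity of $\zeta$ gives
\[
 F_k(2m)=\langle\chi_{2m},\zeta^k x_{2m}\rangle_\mu .
\]
I will move one power of $\zeta$ onto the pair $\chi_{2m},x_{2m}$, so that one factor of $\mathbf L$ appears. I then insert the resolution of the identity $\sum_j \chi_j\otimes\chi_j$ between each of the remaining $k-1$ multiplications by $\zeta$, each of which contributes a factor $\mathbf C$. The result is the $(2m,2m)$ entry of $\mathbf C^{k-1}\mathbf L$.

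\emph{Odd case.} Here $\varphi_{2m+1}^*=\zeta^{m+1}x_{2m+1}$ and $\varphi_{2m+1}=\zeta^m\chi_{2m+1}$, so
\[
 F_k(2m+1)=\langle x_{2m+1},\zeta^{k-1}\chi_{2m+1}\rangle_\mu .
\]
Expanding $\zeta^{k-1}\chi_{2m+1}$ in the $\chi$-basis gives a factor $\mathbf C^{k-1}$, and pairing the result with $x_{2m+1}$ gives a factor $\mathbf M$. This yields $(\mathbf M\mathbf C^{k-1})_{2m+1,2m+1}$. Formula \eqref{eq:sigma-cmv} then follows from \eqref{eq:sigma-F}.

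\emph{Main obstacle.} The delicate step is getting the order and the complex conjugation in the matrix convention exactly right, since the paper's inner product is linear in the first variable. I will settle this by the $k=1$ calibration above, together with a direct check that $\langle\zeta\chi_0,\chi_0\rangle=\mathbf C_{00}=\overline{\alpha_0}$ or $\alpha_0$ matches $\int\zeta\,\dd\mu$ under the stated convention.

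\emph{Locality.} $\mathbf L$ and $\mathbf M$ are direct sums of $2\times2$ blocks. Hence a diagonal entry of a word in $k-1$ copies of $\mathbf C$ and one further block factor involves only blocks $\Theta(\alpha_j)$ whose indices $j$ lie within distance $k-1$ of $n$. This gives the window $\alpha_{n-k+1},\dots,\alpha_{n+k-1}$, truncated at the left boundary.

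\emph{Square summability.} I will compare the word with the same word in which every $\Theta(\alpha_j)$ is replaced by $\Theta(0)$. The word built from $\Theta(0)$ is exactly the one for Haar measure. For Haar measure $\varphi_n=\zeta^n$ and $\varphi_n^*=1$, so $F_k(n)=\langle1,\zeta^{n+k}\rangle=0$ for $k\geq1$, and the comparison word has zero diagonal. All factors are unitary, so a telescoping estimate gives
\[
 |F_k(n)|\leq\sum_{|j-n|\leq k-1}\|\Theta(\alpha_j)-\Theta(0)\|\leq c\sum_{|j-n|\leq k-1}|\alpha_j|,
\]
where I use $1-\rho_j\leq|\alpha_j|^2$ to bound $\|\Theta(\alpha_j)-\Theta(0)\|$. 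Since $(\alpha_n)\in\ell^2$ by Corollary~\ref{cor:sigma01}, and each $\alpha_j$ appears in at most $2k-1$ of these sums, Cauchy--Schwarz gives $(F_k(n))_{n\geq0}\in\ell^2$.
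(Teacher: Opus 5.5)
\textbf{The reduction and the locality argument.} Your reduction to $F_k(2m)=\langle\chi_{2m},\zeta^kx_{2m}\rangle_\mu$ and $F_k(2m+1)=\langle x_{2m+1},\zeta^{k-1}\chi_{2m+1}\rangle_\mu$ is exactly the paper's, and so is your locality argument.

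\textbf{The gap: the matrix dictionary.} The dictionary is the real content of \eqref{eq:F-even}--\eqref{eq:F-odd}. Since $\mathbf L,\mathbf M,\mathbf C$ are already defined by \eqref{eq:L-def}--\eqref{eq:M-C-def}, the relations must be proved, not chosen. The ones you wrote for $\mathbf L$ and $\mathbf C$ are false.
\begin{itemize}
\item By \eqref{eq:overlap-shifted} with $n=0$, $\alpha_0=\int\overline\zeta\,\dd\mu$. Hence $\langle\zeta\chi_0,x_0\rangle_\mu=\langle\zeta\chi_0,\chi_0\rangle_\mu=\overline{\alpha_0}$, whereas $\mathbf L_{00}=\mathbf C_{00}=\alpha_0$.
\item Worse, $\langle\zeta\chi_0,x_2\rangle_\mu=\langle\zeta^2,\varphi_2\rangle_\mu=1/\kappa_2\neq0$ while $\mathbf L_{20}=0$. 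So multiplication by $\zeta$ does not carry the $\chi$-basis block-diagonally onto the $x$-basis at all.
\item Even granting your first two relations, Parseval would give $\langle\zeta\chi_j,\chi_i\rangle_\mu=(\mathbf M\mathbf L)_{ij}$, not $\mathbf C_{ij}$.
\end{itemize}
Your calibrations inspect only diagonal entries. They would flag a mismatch at $(0,0)$, but they cannot recover the correct off-diagonal structure.

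\textbf{The correct dictionary.} With the inner product linear in the first variable,
\[
 \mathbf L_{ij}=\langle\chi_i,\zeta x_j\rangle_\mu,\qquad
 \mathbf M_{ij}=\langle x_i,\chi_j\rangle_\mu,\qquad
 \mathbf C_{ij}=\langle\chi_i,\zeta\chi_j\rangle_\mu .
\]
So your relations become true after replacing $\zeta$ by $\overline\zeta$, and transposing the last one.
\begin{itemize}
\item The first two are checked block by block from \eqref{eq:szego}--\eqref{eq:szego-reversed} and Lemma~\ref{lem:overlaps}. For example, $\langle\chi_{2m},\zeta x_{2m}\rangle_\mu=\langle\varphi_{2m}^*,\zeta\varphi_{2m}\rangle_\mu=\alpha_{2m}$ and $\langle\chi_{2m+1},\zeta x_{2m+1}\rangle_\mu=\langle\varphi_{2m+1},\varphi_{2m+1}^*\rangle_\mu=-\overline{\alpha_{2m}}$. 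Entries outside the blocks vanish by orthogonality.
\item The third follows by Parseval, and induction gives $\langle\chi_a,\zeta^r\chi_b\rangle_\mu=(\mathbf C^r)_{ab}$.
\item For $n=2m$, expanding $\zeta x_n=\sum_j\overline{\mathbf L_{jn}}\chi_j$ gives $F_k(n)=\sum_j(\mathbf C^{k-1})_{nj}\mathbf L_{jn}$.
\item For $n=2m+1$, expanding $x_n=\sum_j\mathbf M_{nj}\chi_j$ gives $F_k(n)=\sum_j\mathbf M_{nj}(\mathbf C^{k-1})_{jn}$.
\end{itemize}
This is what the paper's phrase ``after one multiplication is represented by $\mathbf L$'' encodes: $\zeta$ acts on the $x$-basis, not on the $\chi$-basis.

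\textbf{Square summability.} Here you take a genuinely different route, and it is sound. The paper expands the diagonal entry as a path sum and asserts that, away from the boundary, every monomial contains a local $\alpha_j$. You instead compare with the free word ($\alpha\equiv0$, Haar measure), whose diagonal vanishes because there $F_k(n)=\langle 1,\zeta^{n+k}\rangle=0$. You then telescope using $\|\Theta(\alpha)-\Theta(0)\|=\sqrt{2(1-\rho)}\le\sqrt2\,|\alpha|$.

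This actually justifies the paper's assertion, since the free diagonal entry counts exactly the paths that avoid every $\alpha_j$. It also yields an explicit bound. To keep the bound local rather than of size $\sup_j|\alpha_j|$, either first replace the blocks outside the window by $\Theta(0)$, which locality allows, or use that the free factors are permutation matrices. The resulting constant may depend on $k$, which is harmless.
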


\begin{proof}
If $n=2m$, then $\varphi_n^*=\zeta^m\chi_n$ and
$\varphi_n=\zeta^mx_n$, whence
\[
 F_k(n)=\langle\chi_n,\zeta^kx_n\rangle.
\]
The change from the alternate CMV basis to the CMV basis after one
multiplication is represented by $\mathbf L$, while every subsequent
multiplication by $\zeta$ is represented by $\mathbf C$.  This proves
\eqref{eq:F-even}.  If $n=2m+1$, then
$\varphi_n=\zeta^m\chi_n$ and
$\varphi_n^*=\zeta^{m+1}x_n$, so
\[
 F_k(n)=\langle x_n,\zeta^{k-1}\chi_n\rangle,
\]
which gives \eqref{eq:F-odd}.  Formula \eqref{eq:sigma-cmv} follows from
\eqref{eq:sigma-F}.

Both $\mathbf L$ and $\mathbf M$ are direct sums of nearest-neighbour
$2\times2$ blocks, proving locality.  Expanding either diagonal entry as a
finite path sum shows that, outside the left boundary, every monomial
contains at least one of the finitely many local Verblunsky coefficients.
Since $|\alpha_j|,\rho_j\leq1$, a finite sum of shifted copies of the
$\ell^2$ sequence $(\alpha_j)$ dominates $|F_k(n)|$ up to a constant
depending only on $k$.  Corollary~\ref{cor:sigma01} therefore gives
$F_k\in\ell^2$.
\end{proof}

\begin{remark}[Why unitarity does not give monotonicity]\label{rem:no-monotonicity}
Although $\mathbf L$, $\mathbf M$ and $\mathbf C$ are unitary,
\eqref{eq:sigma-cmv} retains only selected parity-diagonal entries of
different unitary words.  There is no general inequality
$\|\operatorname{diag}(UA)\|_2\leq\|\operatorname{diag}(A)\|_2$ for a
unitary $U$: off-diagonal mass may return to the diagonal after another
unitary step.  The next section shows that this is not merely a limitation of
the argument.
\end{remark}

When all Verblunsky coefficients are real, direct multiplication of the
blocks gives, with $\alpha_{-1}=-1$ and $\rho_{-1}=0$,
\begin{align}
 F_1(n)&=\alpha_n,\label{eq:F1-real}\\
 F_2(n)&=\rho_n^2\alpha_{n+1}-\alpha_{n-1}\alpha_n^2,
 \label{eq:F2-real}\\
 F_3(n)&=\rho_n^2\rho_{n+1}^2\alpha_{n+2}
 -\rho_n^2\alpha_n\alpha_{n+1}^2
 -2\rho_n^2\alpha_{n-1}\alpha_n\alpha_{n+1}\notag\\
 &\hspace{2.8em}+\alpha_{n-1}^2\alpha_n^3
 -\rho_{n-1}^2\alpha_{n-2}\alpha_n^2.\label{eq:F3-real}
\end{align}
For general $k$, $F_k(n)$ is a finite sum over closed alternating CMV paths.
The unique path reaching the rightmost possible site contributes
\[
 \left(\prod_{j=n}^{n+k-2}\rho_j^2\right)\alpha_{n+k-1},
\]
while the remaining paths are nonlinear correction terms.

\section{A quadratic counterexample to monotonicity}\label{sec:counterexample}

Let
\begin{equation}\label{eq:pstar}
 p_*(z,w)=z^2-\sqrt3zw+w^2,
 \qquad q_*(\zeta)=\zeta^2-\sqrt3\zeta+1.
\end{equation}
Since $\|q_*\|_2^2=5$, the associated probability measure is
\begin{equation}\label{eq:mustar}
 \dd\mu_*(\zeta)=\frac{|\zeta^2-\sqrt3\zeta+1|^2}{5}\,\dd m(\zeta).
\end{equation}
Its only non-zero positive moments are
\[
 \widehat\mu_*(1)=-\frac{2\sqrt3}{5},
 \qquad
 \widehat\mu_*(2)=\frac15.
\]
Thus its Carath\'eodory and Schur functions are
\begin{align}
 \mathcal F(\zeta)&=1-\frac{4\sqrt3}{5}\zeta+\frac25\zeta^2,
 \label{eq:Caratheodory}\\
 f_0(\zeta)&=\frac{\mathcal F(\zeta)-1}
 {\zeta(\mathcal F(\zeta)+1)}
 =\frac{\zeta-2\sqrt3}{\zeta^2-2\sqrt3\zeta+5}.
 \label{eq:Schur0}
\end{align}
For real Schur data the Schur algorithm is
\begin{equation}\label{eq:Schur-algorithm}
 \alpha_n=f_n(0),
 \qquad
 f_{n+1}(\zeta)=\frac{f_n(\zeta)-\alpha_n}
 {\zeta(1-\alpha_nf_n(\zeta))}.
\end{equation}

\begin{lemma}[Six-step Verblunsky formula]\label{lem:six-step}
The Verblunsky coefficients of $\mu_*$ are real.  For $m\geq0$ they are
\begin{align}
 \alpha_{6m}&=(-1)^{m+1}\frac{2\sqrt3}{36m(m+1)+5},
 \label{eq:a0}\\
 \alpha_{6m+1}&=(-1)^{m+1}\frac{6m+7}{36m^2+48m+13},
 \label{eq:a1}\\
 \alpha_{6m+2}&=(-1)^{m+1}\frac{\sqrt3}{6m+4},
 \label{eq:a2}\\
 \alpha_{6m+3}&=(-1)^{m+1}\frac1{3m+3},
 \label{eq:a3}\\
 \alpha_{6m+4}&=(-1)^{m+1}\frac{\sqrt3}{6m+8},
 \label{eq:a4}\\
 \alpha_{6m+5}&=(-1)^{m+1}\frac{6m+5}{36m^2+96m+61}.
 \label{eq:a5}
\end{align}
\end{lemma}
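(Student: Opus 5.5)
The plan is to run the Schur algorithm \eqref{eq:Schur-algorithm} on $f_0$ from \eqref{eq:Schur0} in closed form. Realness is immediate. The moments of $\mu_*$ are real because $q_*$ has real coefficients, so $\mathcal F$ and $f_0$ have real Taylor coefficients. Each Schur step $f\mapsto (f-f(0))/(\zeta(1-f(0)f))$ preserves real coefficients, so every $\alpha_n=f_n(0)$ is real. Since $\mu_*$ is non-trivial, $|\alpha_n|<1$ and the Schur algorithm never terminates.

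Next I will show that the shape of $f_0$ is preserved. Suppose
\[
 f_n(\zeta)=\frac{a_n\zeta+b_n}{\zeta^2+c_n\zeta+d_n},
 \qquad \alpha_n=\frac{b_n}{d_n}.
\]
Then $N-\alpha_nD$ vanishes at $0$, with $N,D$ the numerator and denominator. After cancelling the factor $\zeta$ one obtains
\[
 f_{n+1}(\zeta)=\frac{-\alpha_n\zeta+(a_n-\alpha_nc_n)}
 {\zeta^2+(c_n-\alpha_na_n)\zeta+(d_n-\alpha_nb_n)}.
\]
The new denominator is still monic, since $N$ has degree at most one. This gives an explicit rational recursion on $(a_n,b_n,c_n,d_n)$, starting from $(1,-2\sqrt3,-2\sqrt3,5)$. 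One checks that $d_n-\alpha_nb_n=d_n\rho_n^2\neq0$, so the recursion is well defined.

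I will then compute the first six to twelve steps by hand. The goal is to guess closed forms for $(a_{6m+j},b_{6m+j},c_{6m+j},d_{6m+j})$, $0\le j\le5$, as polynomials in $m$ of degree at most two. These may need an overall sign $(-1)^m$ and a common normalising factor, since $f$ is unchanged by scaling the numerator and denominator jointly after the denominator is made monic. The pattern $36m(m+1)+5$ in \eqref{eq:a0} and the factor $(-1)^{m+1}$ suggest that $d_{6m}$ is a quadratic in $m$ and that the parameters at $6m+6$ equal those at $6m$ with $m\mapsto m+1$ and the numerator sign flipped. With the ansatz fixed, the lemma follows by induction on $m$. One verifies six polynomial identities in $m$, namely that one Schur step sends the $j$-th closed form to the $(j+1)$-st, and the $5$th to the $0$th form at $m+1$. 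Reading off $b/d$ at each stage then gives \eqref{eq:a0}--\eqref{eq:a5}. As a sanity check at $m=0$, $\alpha_0=-2\sqrt3/5$ agrees with \eqref{eq:a0}.

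The main obstacle is finding the correct closed forms for the intermediate parameters, especially at $j=1$ and $j=5$. There the denominators $36m^2+48m+13$ and $36m^2+96m+61$ appear, and the normalisation is easy to get wrong. If guessing proves awkward, I would instead work with determinants. By Proposition~\ref{prop:desnanot}, $|\alpha_{n-1}|^2=1-D_{n-1}D_{n+1}/D_n^2$, and the Toeplitz matrices of $\mu_*$ are pentadiagonal. So the $D_n$ satisfy a finite-order linear recurrence with characteristic roots $e^{\pm i\pi/6}$ of multiplicity two, which is the source of the period $6$ and of the linear-in-$m$ growth. I would solve this recurrence explicitly and use it to cross-check the moduli. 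The Schur recursion would still be used to fix the signs.
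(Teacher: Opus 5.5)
This is essentially the paper's proof: your one-step recursion and starting data $(1,-2\sqrt3,-2\sqrt3,5)$ are correct, and the six closed forms you still need to guess, with transitions $r\to r+1$ and $5\to0$ under $m\mapsto m+1$, are exactly the certificate table in Appendix~\ref{app:schur-certificate}. There the paper normalizes by the constant term of the denominator; in your monic normalization the entries are ratios of quadratics in $m$ rather than polynomials, e.g.\ $(a,b,c,d)_{6m}=\bigl(\eps_m(6m-1),\,2\sqrt3\,\eps_m,\,-\sqrt3(36m^2+30m+2),\,36m^2+36m+5\bigr)/(36m^2+24m+1)$ with $\eps_m=(-1)^{m+1}$, so until you produce and check that table the proof is not yet complete. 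Separately, your optional determinant cross-check misidentifies the characteristic roots: for the weight $|q_*|^2\,\dd m$ one has $5^nD_n=(n+2)^2-2-2\cos\frac{(n-1)\pi}{3}$, so the roots are $1$ (triple) and $e^{\pm i\pi/3}$ rather than $e^{\pm i\pi/6}$ doubled, and the growth is quadratic; inserting this into \eqref{eq:alpha-D} does reproduce the moduli in \eqref{eq:a0}--\eqref{eq:a5}, though not the signs.
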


\begin{proof}
Starting from \eqref{eq:Schur0}, apply \eqref{eq:Schur-algorithm}.
The first six values are
\[
 -\frac{2\sqrt3}{5},\quad -\frac7{13},\quad
 -\frac{\sqrt3}{4},\quad-\frac13,\quad
 -\frac{\sqrt3}{8},\quad-\frac5{61}.
\]
For a fully checkable induction, Appendix~\ref{app:schur-certificate} lists
the four coefficients of every rational function
\[
 f_{6m+r}(\zeta)=
 \frac{A_r(m)+B_r(m)\zeta}{1+C_r(m)\zeta+D_r(m)\zeta^2},
 \qquad 0\leq r\leq5.
\]
Substitution in \eqref{eq:Schur-algorithm} takes row $r$ to row $r+1$ and
takes row $5$ to row $0$ with $m$ replaced by $m+1$.  The constant
coefficients $A_r(m)$ are exactly \eqref{eq:a0}--\eqref{eq:a5}.  This proves
the formulas by induction.  The displayed moduli are all strictly less than
one, as required.
\end{proof}

Write $F_j(6m+r)=F_{j,r}(m)$ and define the six-term difference
\begin{equation}\label{eq:Bm-def}
 B_m=\sum_{r=0}^5\bigl(|F_4(6m+r)|^2-|F_3(6m+r)|^2\bigr).
\end{equation}
The exact alternating-CMV multiplication is given in
Appendix~\ref{app:cmv-certificate}.  It implies, for $m\geq1$,
\begin{equation}\label{eq:Bm-PQ}
 B_m=-18\frac{P(m)}{Q(m)},
\end{equation}
where $P$ and $Q$ are the integer polynomials in
\eqref{eq:Q-poly} and \eqref{eq:P-poly}.

\begin{lemma}[Block tail estimate]\label{lem:block-tail}
For every integer $m\geq1$,
\begin{equation}\label{eq:block-tail}
 -\frac1{6m^3}<B_m<0.
\end{equation}
\end{lemma}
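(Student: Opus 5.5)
We will work entirely from the exact representation \eqref{eq:Bm-PQ}, $B_m=-18P(m)/Q(m)$, with the integer polynomials $P$ and $Q$ from the appendix. The statement then splits into two polynomial inequalities valid for all integers $m\geq1$:
\[
 Q(m)>0,\qquad P(m)>0,
\]
which give $B_m<0$, and
\[
 108\,m^3P(m)<Q(m),
\]
which is the lower bound $-\tfrac1{6m^3}<B_m$ after multiplying through by $Q(m)/18>0$.

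\textbf{Step 1: positivity of $Q$.} $Q$ arises as a product of the squared denominators of the Verblunsky coefficients \eqref{eq:a0}--\eqref{eq:a5} and of the CMV entries. So we expect it to factor, up to a positive constant, as a product of powers of
\[
 36m(m+1)+5,\quad 36m^2+48m+13,\quad 6m+4,\quad 3m+3,\quad 6m+8,\quad 36m^2+96m+61,
\]
together with neighbouring-block shifts $m\mapsto m\pm1$. Each of these factors is positive for $m\geq1$. If $Q$ is listed only in expanded form, we first verify this factorization. If it does not factor, we fall back on the shift method below.

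\textbf{Step 2: positivity of the two polynomial differences.} For both $P$ and $R(m)=Q(m)-108m^3P(m)$ we substitute $m=1+x$ and expand in $x$. The aim is to show that every coefficient of the resulting polynomial in $x$ is a nonnegative integer, with at least one positive. This is a finite, exact, certifiable computation, matching the paper's use of an algebraic certificate.

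The leading-order heuristic shows the bound has room. Since $F_4(n)-F_3(n)$ is a local difference of $O(1/n)$ quantities, $B_m$ should behave like $-c/m^4$ for large $m$, because alternating signs produce cancellation within each block. Thus $\deg Q-\deg P\geq4$, the top coefficient of $R$ is that of $Q$ and is positive, and the constant $\tfrac16$ only matters for small $m$.

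\textbf{Main obstacle.} The expected difficulty is that some shifted coefficients of $P$ or $R$ turn out negative. In that case we shift further, to $m=m_0+x$ for a moderate $m_0$, until all coefficients are nonnegative. The finitely many integers $1\leq m<m_0$ are then handled by exact rational evaluation of $B_m$ directly from the closed-form $\alpha$'s.

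The other delicate point is confirming that \eqref{eq:Bm-PQ} is valid already at $m=1$, i.e.\ that the left-boundary convention ($\alpha_{-1}=-1$) affects only the block $m=0$. Since $F_4(6m+r)$ involves only $\alpha_{6m+r-3},\ldots,\alpha_{6m+r+3}$, whose indices are $\geq3$ for $m\geq1$, this holds.
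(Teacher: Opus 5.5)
Your approach is essentially the paper's: $Q(m)$ is displayed as a product of squares of factors that are positive for $m\geq1$, $P(m+1)$ has strictly positive coefficients (this is your substitution $m=1+x$), and $R(m)=Q(m)-108m^3P(m)$ already has strictly positive coefficients, so the further shifts and the finite case check are never needed. Your heuristic needs a correction, although it does not affect the exact certificate: $\deg Q=20$ and $\deg P=17$, and the leading coefficients of $Q$ and $108m^3P$ coincide ($81\cdot1296^4=108\cdot2115832430592$), so $\deg R=19$ and $6m^3B_m\to-1$. Thus the lower bound is asymptotically sharp, and the constant $\tfrac16$ is tight for large $m$ rather than for small $m$ (at $m=1$ one has $B_1\approx-0.0153$).
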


\begin{proof}
The denominator $Q(m)$ is positive.  Although the last coefficients of
$P(m)$ in \eqref{eq:P-poly} are negative, direct expansion shows that
$P(m+1)$ has strictly positive coefficients.  Hence $P(m)>0$ for every
integer $m\geq1$, proving $B_m<0$.

For the lower estimate, clear denominators in
$B_m+1/(6m^3)$.  Its denominator is $6m^3Q(m)>0$, and its numerator is the
polynomial $R(m)$ in \eqref{eq:R-poly}.  Every coefficient of $R$ is
strictly positive, so $B_m+1/(6m^3)>0$.
\end{proof}

The initial CMV block has to be computed separately because of the singleton
block in $\mathbf M$.  Appendix~\ref{app:cmv-certificate} gives
\begin{equation}\label{eq:B0}
 B_0=\frac{14543757}{503079200}>\frac{289}{10000}.
\end{equation}
Substitution into \eqref{eq:Bm-PQ} also gives the convenient exact bounds
\begin{equation}\label{eq:B1-B4}
 B_1>-\frac{153}{10000},\quad
 B_2>-\frac{523}{100000},\quad
 B_3>-\frac{231}{100000},\quad
 B_4>-\frac{122}{100000}.
\end{equation}
Consequently,
\begin{equation}\label{eq:first-five}
 \sum_{m=0}^4B_m>\frac{121}{25000}.
\end{equation}

\begin{theorem}[Failure of monotonicity]\label{thm:counterexample}
For $\calM_*=[z^2-\sqrt3zw+w^2]$,
\begin{equation}\label{eq:counterexample-final}
 \Sigma_4(\calM_*)-\Sigma_3(\calM_*)>\frac{13}{75000}>0.
\end{equation}
Thus the sequence of higher numerical invariants need not be decreasing,
even for a singly generated quadratic homogeneous submodule.
\end{theorem}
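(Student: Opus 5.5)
The proof reduces to summing the six-term blocks $B_m$ and controlling the tail. By \eqref{eq:sigma-F} and Theorem~\ref{thm:cmv-formula}, both $(F_3(n))$ and $(F_4(n))$ lie in $\ell^2$. Hence $\Sigma_4(\calM_*)-\Sigma_3(\calM_*)=\sum_{n\geq0}(|F_4(n)|^2-|F_3(n)|^2)$ is an absolutely convergent series. Grouping indices into consecutive blocks $\{6m,\ldots,6m+5\}$ is therefore legitimate and gives
\[
 \Sigma_4(\calM_*)-\Sigma_3(\calM_*)=\sum_{m\geq0}B_m,
\]
with $B_m$ as in \eqref{eq:Bm-def}. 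The Verblunsky coefficients of $\mu_*$ are the explicit real values of Lemma~\ref{lem:six-step}. The entries $F_3,F_4$ can thus be computed from the real-coefficient CMV products, via \eqref{eq:F3-real} and its analogue for $k=4$; the results are recorded as \eqref{eq:Bm-PQ} for $m\geq1$ and \eqref{eq:B0} for $m=0$.

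I split the sum into the head $\sum_{m=0}^4B_m$ and the tail $\sum_{m\geq5}B_m$. For the head I use \eqref{eq:first-five}, which combines the exact value \eqref{eq:B0} with the rational lower bounds \eqref{eq:B1-B4}:
\[
 \frac{289}{10000}-\frac{153}{10000}-\frac{52.3}{10000}-\frac{23.1}{10000}-\frac{12.2}{10000}=\frac{48.4}{10000}=\frac{121}{25000}.
\]
For the tail I use Lemma~\ref{lem:block-tail}: $B_m>-1/(6m^3)$ for $m\geq1$. The integral comparison then gives
\[
 \sum_{m\geq5}\frac1{m^3}<\int_{4.5}^\infty\frac{\dd x}{x^3}=\frac{1}{2\cdot 4.5^2}=\frac{2}{81}.
\]
Hence $\sum_{m\geq5}B_m>-\tfrac{1}{6}\cdot\tfrac{2}{81}=-\tfrac1{243}$. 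Midpoint comparison is valid here because $x^{-3}$ is convex. The cruder bound $\int_4^\infty$ gives $1/32$, so the tail is $>-1/192$, which exceeds $121/25000$ in size and fails. A sharper tail estimate is therefore needed.

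Combining the pieces, the total exceeds
\[
 \frac{121}{25000}-\frac1{243}\approx 0.00484-0.004115=0.000725>\frac{13}{75000}\approx0.000173.
\]
If instead the intended route is the bound $\sum_{m\geq5}m^{-3}<\zeta(3)-\sum_{m\leq4}m^{-3}$, I will use the rational bound $\zeta(3)<1.2021$. This gives a tail sum below $0.0285$ and a tail bound of about $-0.00475$, which yields $0.00484-0.00475\approx 0.00009$. That is not enough for $13/75000$, so I will commit to the midpoint or convexity comparison. I will also check whether the paper's constant $13/75000$ corresponds exactly to some specific rational tail bound, for example $\sum_{m\ge5}m^{-3}<\int_5^\infty x^{-3}\,dx+\tfrac12\cdot 5^{-3}=\tfrac1{50}+\tfrac1{250}=\tfrac{6}{250}$. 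That bound gives a tail $>-\tfrac1{250}$, and
\[
 \frac{121}{25000}-\frac{100}{25000}=\frac{21}{25000}=\frac{63}{75000}>\frac{13}{75000}.
\]
Either rational tail bound suffices.

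The main obstacle is not this final arithmetic but the verification behind \eqref{eq:Bm-PQ} and Lemma~\ref{lem:block-tail}. That verification requires evaluating $F_4$ by closed alternating CMV paths across the six residue classes, tracking the sign pattern $(-1)^{m+1}$ together with the $\rho_j^2$ factors, and certifying that the polynomials $P$, $P(m+1)$ and $R$ have positive coefficients. These steps are taken as given from the appendix certificate. The only care needed here is to keep every estimate rational, so that the conclusion is exact rather than numerical.
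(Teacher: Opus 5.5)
Your argument is correct and follows the paper's proof: you regroup $\Sigma_4-\Sigma_3$ into the blocks $B_m$, bound the head by \eqref{eq:first-five}, and bound the tail by Lemma~\ref{lem:block-tail} together with an integral comparison. The only difference is the tail estimate. The paper uses $\sum_{m\ge5}m^{-3}\le 5^{-3}+\int_5^\infty x^{-3}\dd x=\frac{7}{250}$, which gives exactly $\frac{121}{25000}-\frac{7}{1500}=\frac{13}{75000}$. Your midpoint (convexity) bound $\frac{2}{81}$ gives the larger margin $\frac{121}{25000}-\frac1{243}=\frac{4403}{6075000}$.

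Two of your asides are wrong and should be deleted. First, the inequality $\sum_{m\ge5}m^{-3}<\int_5^\infty x^{-3}\dd x+\frac12\cdot 5^{-3}=\frac{6}{250}$ is false. For convex $x^{-3}$ the trapezoid rule overestimates the integral, so this inequality runs the other way; numerically the sum is about $0.02439>0.024$. Second, in the $\zeta(3)$ variant the remainder $1.2021-\sum_{m\le4}m^{-3}$ is about $0.0244$, not $0.0285$. That route would therefore also succeed.
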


\begin{proof}
By \eqref{eq:sigma-F} and \eqref{eq:Bm-def},
\[
 \Sigma_4(\calM_*)-\Sigma_3(\calM_*)=\sum_{m=0}^{\infty}B_m.
\]
Lemma~\ref{lem:block-tail} and the integral test give
\[
 \sum_{m=5}^{\infty}B_m
 >-\frac16\sum_{m=5}^{\infty}\frac1{m^3}
 \geq-\frac16\left(\frac1{5^3}+\int_5^{\infty}x^{-3}\,\dd x\right)
 =-\frac7{1500}.
\]
Combining this with \eqref{eq:first-five} yields
\[
 \Sigma_4(\calM_*)-\Sigma_3(\calM_*)
 >\frac{121}{25000}-\frac7{1500}
 =\frac{13}{75000}.
\]
No numerical approximation enters the argument.
\end{proof}

\begin{remark}
For orientation only,
\[
 \Sigma_3(\calM_*)\approx0.262378,
 \qquad \Sigma_4(\calM_*)\approx0.264677.
\]
The earlier monotonicity calculations for $[z-w]$ and $[(z-w)^2]$
therefore reflect special Verblunsky data rather than a general contraction
principle; compare \cite{LiuLuZu}.
\end{remark}

\section{Quasi-homogeneous generators}\label{sec:quasi}

We record the weighted extension in a form which makes clear that it contains
no new non-zero singular data.  Fix relatively prime $s,t\in\N$, define
$\deg_{s,t}(z^iw^j)=si+tj$, and suppose that
\begin{equation}\label{eq:quasi-p}
 p(z,w)=\sum_{si+tj=N}c_{ij}z^iw^j\neq0.
\end{equation}

\begin{lemma}[Homogeneous reduction]\label{lem:quasi-reduction}
There are unique integers $0\leq a<t$, $0\leq b<s$ and $d\geq0$ such that
\[
 N=sa+tb+std
\]
and
\begin{equation}\label{eq:quasi-reduction}
 p(z,w)=z^aw^bq(Z,W),
 \qquad Z=z^t,\quad W=w^s,
\end{equation}
where $q$ is an ordinary homogeneous polynomial of degree $d$.
\end{lemma}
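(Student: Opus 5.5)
The argument is elementary number theory on the exponent set of $p$. Every monomial $z^iw^j$ occurring in $p$ satisfies $si+tj=N$. I would first show that all such exponent pairs lie in a single arithmetic progression along the line $si+tj=N$, and then factor out the smallest exponents.

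\emph{Residues of the exponents.} Suppose $si+tj=N$ and $si'+tj'=N$. Then $s(i-i')=t(j'-j)$. Since $(s,t)=1$, this forces $t\mid i-i'$ and $s\mid j-j'$. So every exponent pair $(i,j)$ has $i\equiv a\pmod t$ and $j\equiv b\pmod s$, where $a$ and $b$ are fixed residues with $0\le a<t$ and $0\le b<s$. These residues are in fact determined by $N$ alone. Reducing $si+tj=N$ modulo $t$ gives $si\equiv N\pmod t$, and $s$ is invertible modulo $t$, so the class of $i$ modulo $t$ is fixed by $N$. The same argument modulo $s$ fixes the class of $j$. This gives uniqueness of $a$ and $b$, and they exist because $p\neq0$ has at least one monomial.

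\emph{Factoring.} For each exponent pair write $i=a+ti'$ and $j=b+sj'$ with $i',j'\ge0$. Substituting,
\[
N=sa+tb+st(i'+j'),
\]
so $i'+j'=d:=(N-sa-tb)/(st)$ is the same nonnegative integer for every monomial. The existence of one monomial shows that $N-sa-tb\ge0$ and that it is divisible by $st$. Hence
\[
p=z^aw^b\sum c_{a+ti',\,b+sj'}\,(z^t)^{i'}(w^s)^{j'},
\]
and the inner sum is $q(Z,W)$ with $q(X,Y)=\sum_{i'+j'=d}c_{a+ti',\,b+sj'}X^{i'}Y^{j'}$. This $q$ is homogeneous of degree $d$ and nonzero.

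\emph{Uniqueness of $d$ and $q$.} Once $a$ and $b$ are fixed, $d$ is determined by the equation $N=sa+tb+std$. The polynomial $q$ is determined by its coefficients, which are read off from those of $p$. This uses that $(X,Y)\mapsto(z^t,w^s)$ is injective on monomials, and that division by $z^aw^b$ is unique in $\C[z,w]$.

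The only step needing care is the uniqueness of $(a,b)$ as \emph{integers} rather than as residues. The constraints $0\le a<t$ and $0\le b<s$ pin down the representatives. One must also check that this choice of representatives gives nonnegative $i'$ and $j'$. That holds because each actual exponent $i$ is nonnegative and congruent to $a$ modulo $t$, with $a$ the least nonnegative member of its class, so $i\ge a$; similarly $j\ge b$.
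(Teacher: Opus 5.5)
Your proof is correct and follows the paper's argument. Coprimality forces all exponents into fixed residue classes $a \pmod t$ and $b \pmod s$, and writing $i=a+ti'$, $j=b+sj'$ gives $i'+j'=d$; your extra remarks (that the residues are determined by $N$ alone, that $i\ge a$ and $j\ge b$, and that $q$ is unique) fill in details the paper leaves implicit.
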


\begin{proof}
If $(i,j)$ and $(i',j')$ solve $si+tj=N$, then
$s(i-i')=-t(j-j')$.  Coprimality implies $t\mid(i-i')$ and
$s\mid(j-j')$.  Thus all admissible $i$ and $j$ have fixed residues
$a\pmod t$ and $b\pmod s$, respectively.  Writing
$i=a+tr$ and $j=b+s\ell$ gives $r+\ell=d$ and proves the assertion and its
uniqueness.
\end{proof}

Put $Q(\zeta)=q(\zeta,1)$ and let $(\varphi_m)$ and $(\alpha_m)$ be the
OPUC and Verblunsky data of the normalized measure
$|Q|^2\dd m/\|Q\|_2^2$.  The character $(z,w)\mapsto z^tw^{-s}$ pushes Haar
measure on $\T^2$ forward to Haar measure on $\T$.

\begin{theorem}[Quasi-homogeneous wandering bases]\label{thm:quasi-bases}
Let $\calM=[p]$.  For $m\geq0$, $0\leq\rho<s$ and $0\leq\sigma<t$, set
\begin{align}
 e_{m,\rho}(z,w)
 &=\frac{p(z,w)}{\|p\|}\,w^\rho W^m
     \varphi_m^*(Z/W),\label{eq:quasi-e}\\
 f_{m,\sigma}(z,w)
 &=\frac{p(z,w)}{\|p\|}\,z^\sigma W^m
     \varphi_m(Z/W).\label{eq:quasi-f}
\end{align}
Then $(e_{m,\rho})$ is an orthonormal basis of $\calM\ominus z\calM$ and
$(f_{m,\sigma})$ is an orthonormal basis of $\calM\ominus w\calM$.  Moreover,
\begin{equation}\label{eq:quasi-mixed}
 \langle e_{m,\rho},f_{k,\sigma}\rangle=0
 \quad\text{unless}\quad m=k\ \text{and}\ \rho=\sigma=0.
\end{equation}
In the exceptional case the inner product is
$1$ for $m=0$ and $-\alpha_{m-1}$ for $m\geq1$.
\end{theorem}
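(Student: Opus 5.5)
The strategy is to replace the total-degree grading of Section~\ref{sec:model} by the weighted grading $\deg_{s,t}$, and then refine it by residue classes so that each piece reduces to the ordinary homogeneous model for $q$. Write $\HH$ as the orthogonal sum of the weighted layers $H^{(s,t)}_N=\Span\{z^iw^j:si+tj=N\}$. Multiplication by $p$ raises weighted degree by $N_p=sa+tb+std$, so $\calM=\bigoplus_K pH^{(s,t)}_K$. The analogues of \eqref{eq:z-wandering-decomp}--\eqref{eq:w-wandering-decomp} hold layer by layer: $\calM\ominus z\calM=\bigoplus_K(pH^{(s,t)}_K\ominus zpH^{(s,t)}_{K-s})$, and similarly for $w$ with shift $t$. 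By Lemma~\ref{lem:quasi-reduction} applied to monomials, a monomial $z^iw^j$ of weighted degree $K$ can be written $z^{\sigma}w^{\rho}Z^rW^\ell$ with $0\le\sigma<t$ and $0\le\rho<s$. The residues $(\sigma,\rho)$ are fixed by $K$, and $r+\ell=n$ is also fixed. Hence $H^{(s,t)}_K=z^\sigma w^\rho\,\widetilde H_n$, where $\widetilde H_n$ denotes the ordinary degree-$n$ homogeneous polynomials in $(Z,W)$.

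The first key step is the analogue of Proposition~\ref{prop:slice}. For fixed $(\sigma,\rho)$ and $n$, define
\[
 V f=\frac{p}{\|p\|}\,z^\sigma w^\rho W^n f(Z/W),\qquad f\in\calP_n .
\]
On $\T^2$ we have $|p|=|q(Z,W)|=|Q(Z/W)|$, and the map $(z,w)\mapsto z^tw^{-s}$ pushes Haar measure forward to Haar measure. Therefore $V$ is unitary from $\calP_n\subset L^2(\mu_Q)$ onto $pH^{(s,t)}_K$, and $\|p\|=\|Q\|_2$. Next, $zpH^{(s,t)}_{K-s}$ is identified: if $\sigma\ge1$ it is $p z^{\sigma}w^\rho\widetilde H_n$, the whole layer. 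If $\sigma=0$, then $z\cdot z^{t-1}=Z$, so it equals $pw^\rho Z\widetilde H_{n-1}$, which is $V(\zeta\calP_{n-1})$. Likewise, $wpH^{(s,t)}_{K-t}$ is the whole layer when $\rho\ge1$, and equals $V(\calP_{n-1})$ when $\rho=0$. Consequently the $z$-wandering part is nonzero only in layers with $\sigma=0$, where it is $\C V\varphi_n^*$. The $w$-wandering part is nonzero only when $\rho=0$, where it is $\C V\varphi_n$. Relabelling the surviving residue gives exactly $e_{m,\rho}$ (with $\sigma=0$) and $f_{m,\sigma}$ (with $\rho=0$) as in \eqref{eq:quasi-e}--\eqref{eq:quasi-f}. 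Orthonormality and completeness then follow as in Theorem~\ref{thm:wandering-bases}, because distinct $(\sigma,\rho,n)$ give distinct monomial supports.

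For \eqref{eq:quasi-mixed}, note that $e_{m,\rho}$ lies in the layer with residues $(0,\rho)$ and index $m$, while $f_{k,\sigma}$ lies in the layer with residues $(\sigma,0)$ and index $k$. Distinct layers are orthogonal, so the inner product vanishes unless $\sigma=\rho=0$ and $m=k$. In that case the inner product equals $\langle\varphi_m^*,\varphi_m\rangle_{\mu_Q}$, which is $1$ for $m=0$ and $-\alpha_{m-1}$ by Lemma~\ref{lem:overlaps}, exactly as in the proof of Theorem~\ref{thm:schmidt}.

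The main obstacle is bookkeeping: one must check that the residue classes and the index $n$ really parametrize weighted layers bijectively, so that $z$ and $w$ move between layers as claimed. This means $z$ increments $\sigma$ until it wraps to $Z$, and $w$ increments $\rho$ until it wraps to $W$. Some care is also needed with the monomial prefactor $z^aw^b$ inside $p$. It is harmless because it has modulus one on $\T^2$ and multiplies every layer uniformly, but the residues must be measured relative to the cofactor $pz^\sigma w^\rho$ rather than the absolute monomial. I would handle this by indexing layers by the cofactor's residues from the outset.
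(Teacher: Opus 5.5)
Your proposal is correct and follows essentially the same route as the paper. Both arguments use the same isometric slice map $h\mapsto \frac{p}{\|p\|}w^\rho W^m h(Z/W)$ to identify the $z\calM$ (resp.\ $w\calM$) part of each layer with $\zeta\calP_{m-1}$ (resp.\ $\calP_{m-1}$), then apply orthogonality of distinct weighted layers and Lemma~\ref{lem:overlaps}. The only differences are cosmetic: the paper obtains the mixed orthogonality from the arithmetic $st(m-k)=s\sigma-t\rho$ plus coprimality, whereas you read it off from the bijection between weighted layers and residue triples $(\sigma,\rho,n)$, which rests on the same coprimality fact; you also spell out why layers with nonzero $z$- or $w$-residue contribute nothing, a point the paper leaves implicit.
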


\begin{proof}
The weighted homogeneous layer contributing to $\calM\ominus z\calM$ can
be indexed uniquely by $sm+\rho$, with $0\leq\rho<s$, and has the form
\[
 w^\rho\Span\{Z^rW^{m-r}:0\leq r\leq m\}.
\]
The corresponding part of $z\calM$ omits the term $r=0$.  The map
\[
 h\longmapsto\frac p{\|p\|}w^\rho W^mh(Z/W)
\]
is an isometry from $\calP_m\subset L^2(|Q|^2\dd m/\|Q\|_2^2)$ onto the
slice, and it maps $\zeta\calP_{m-1}$ onto the $z\calM$ part.  Its endpoint
orthogonal complement is therefore generated by $\varphi_m^*$, proving
\eqref{eq:quasi-e}.  The same argument, writing the relevant index as
$tm+\sigma$, gives \eqref{eq:quasi-f}.

The weighted degrees of $e_{m,\rho}$ and $f_{k,\sigma}$ can agree only if
\[
 st(m-k)=s\sigma-t\rho.
\]
The right-hand side has absolute value strictly smaller than $st$.
Coprimality and the prescribed residue ranges therefore force
$m=k$ and $\rho=\sigma=0$.  In that case the inner product reduces to
$\langle\varphi_m^*,\varphi_m\rangle$, and
Lemma~\ref{lem:overlaps} completes the proof.
\end{proof}

\begin{corollary}[No new non-zero singular values]\label{cor:quasi-singular}
For an $(s,t)$-quasi-homogeneous generator, the non-zero singular values of
$P_zP_w$ are
\[
 1,|\alpha_0|,|\alpha_1|,\ldots .
\]
They are exactly the non-zero singular values associated with the ordinary
homogeneous submodule $[q]\subset H^2(\D^2)$.  The weights $s,t$ and the
monomial factor $z^aw^b$ contribute only mutually orthogonal zero blocks.
\end{corollary}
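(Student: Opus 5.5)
The plan is to derive Corollary~\ref{cor:quasi-singular} directly from Theorem~\ref{thm:quasi-bases}, repeating the argument of Theorem~\ref{thm:schmidt} with the residue-class labels added. By Theorem~\ref{thm:quasi-bases}, $P_z=\sum_{m,\rho}e_{m,\rho}\otimes e_{m,\rho}$ and $P_w=\sum_{k,\sigma}f_{k,\sigma}\otimes f_{k,\sigma}$, both converging strongly. The first step is to expand
\[
 P_zP_w=\sum_{m,\rho}\sum_{k,\sigma}\langle f_{k,\sigma},e_{m,\rho}\rangle\,e_{m,\rho}\otimes f_{k,\sigma}.
\]
This is legitimate because $P_zf_{k,\sigma}=\sum_{m,\rho}\langle f_{k,\sigma},e_{m,\rho}\rangle e_{m,\rho}$. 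Applying \eqref{eq:quasi-mixed}, every coefficient vanishes except those with $m=k$ and $\rho=\sigma=0$. Taking complex conjugates of the exceptional values $1$ and $-\alpha_{m-1}$ then gives
\[
 P_zP_w=e_{0,0}\otimes f_{0,0}-\sum_{m\geq1}\overline{\alpha_{m-1}}\,e_{m,0}\otimes f_{m,0},
\]
and $P_zP_w f_{k,\sigma}=0$ whenever $\sigma\neq0$.

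The second step reads off the singular values. The systems $(e_{m,0})_m$ and $(f_{m,0})_m$ are orthonormal, being subsystems of orthonormal bases. After absorbing phases into the $e_{m,0}$, the display above is therefore a Schmidt decomposition. Its non-zero singular values, counted with multiplicity, are $1,|\alpha_0|,|\alpha_1|,\ldots$. The vectors $e_{m,\rho}$ with $\rho\neq0$ and $f_{k,\sigma}$ with $\sigma\neq0$ span subspaces on which $P_zP_w$ and its adjoint vanish. These are the mutually orthogonal zero blocks contributed by the weights and the monomial factor.

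The last step is the comparison with $[q]$. The data $(\alpha_m)$ are by definition the Verblunsky coefficients of $|Q|^2\dd m/\|Q\|_2^2$ with $Q=q(\cdot,1)$. This is exactly the measure attached to the homogeneous generator $q$ in \eqref{eq:mu-def}. Theorem~\ref{thm:schmidt} applied to $[q]$ therefore produces the same list.

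I expect no real obstacle. The one point needing care is the bookkeeping in \eqref{eq:quasi-mixed}: one must confirm that the exceptional coefficient really is $\langle\varphi_m,\varphi_m^*\rangle$, up to conjugation, and not a pairing involving the extra factors $w^\rho,z^\sigma$. Since $\rho=\sigma=0$ in that case, both vectors are $p/\|p\|$ times $W^m$ times a polynomial in $Z/W$. Pushing forward by the character $z^tw^{-s}$ reduces the pairing to the OPUC inner product, and Lemma~\ref{lem:overlaps} finishes the argument.
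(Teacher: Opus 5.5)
Your proposal is correct and is essentially the paper's proof. You expand the two wandering projections in the bases of Theorem~\ref{thm:quasi-bases} and use \eqref{eq:quasi-mixed} to annihilate every coefficient outside the zero-residue block. The resulting Schmidt decomposition has coefficients $1,-\overline{\alpha_0},-\overline{\alpha_1},\ldots$, the same list Theorem~\ref{thm:schmidt} gives for $[q]$, since both use the measure $|Q|^2\dd m/\|Q\|_2^2$.
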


\begin{proof}
Expand the two wandering projections in the bases of
Theorem~\ref{thm:quasi-bases}.  Equation \eqref{eq:quasi-mixed} annihilates
every product block except the common zero-residue block, where the
coefficients are $1,-\overline{\alpha_0},-\overline{\alpha_1},\ldots$.
\end{proof}

For example, if $p(z,w)=(z^t-w^s)^d$, then the ordinary reduction is
$q(Z,W)=(Z-W)^d$.  Hence
\begin{equation}\label{eq:quasi-model-singular}
 s_0(P_zP_w)=1,
 \qquad
 s_m(P_zP_w)=\frac d{m+d}\quad(m\geq1),
\end{equation}
independently of the relatively prime weights $s,t$.

\section{Beyond the graded setting: general polynomial generators}
\label{sec:general}

We now isolate the part of the construction that remains valid without
homogeneity.  The resulting model is exact, but it is two-variable: in
general there is no scalar measure on $\T$ and no single Verblunsky sequence.
This distinction is consistent with the recurrence and moment-matrix
phenomena in the theory of orthogonal polynomials on the bicircle; see
\cite{GeronimoWoerdemanAnnals,GeronimoWoerdemanBicircle}.

Let $0\neq p\in\C[z,w]$ and write its total-degree decomposition as
\begin{equation}\label{eq:general-homogeneous-parts}
 p=p_a+p_{a+1}+\cdots+p_b,
 \qquad p_a\neq0,\quad p_b\neq0,
\end{equation}
where $p_j\in H_j$.  Define the probability measure
\begin{equation}\label{eq:general-measure}
 \dd\nu_p(z,w)=\frac{|p(z,w)|^2}{\|p\|_2^2}\,\dd m(z,w)
 \quad\text{on }\T^2
\end{equation}
and let
\[
 \mathscr H_p=P^2(\nu_p)
 =\overline{\C[z,w]}^{\,L^2(\nu_p)}.
\]
Multiplication by $z$ and $w$ on $\mathscr H_p$ will be denoted by $S_z$
and $S_w$.  They are commuting isometries.  Put
\[
 \mathscr E_z=\mathscr H_p\ominus z\mathscr H_p,
 \qquad
 \mathscr E_w=\mathscr H_p\ominus w\mathscr H_p,
\]
and denote the corresponding orthogonal projections by $Q_z$ and $Q_w$.

\begin{theorem}[Weighted model and grading rigidity]
\label{thm:general-weighted-model}
Let $\calM=[p]$ and let $p$ have the decomposition
\eqref{eq:general-homogeneous-parts}.
\begin{enumerate}[label=\textup{(\roman*)}]
\item The map initially defined on polynomials by
\begin{equation}\label{eq:general-unitary}
 U_p h=\frac{p}{\|p\|_2}h
\end{equation}
extends to a unitary $U_p:\mathscr H_p\to\calM$ satisfying
$U_pS_z=R_zU_p$ and $U_pS_w=R_wU_p$.
\item Under this unitary,
\begin{align}
 U_p^*P_zU_p&=Q_z,& U_p^*P_wU_p&=Q_w,
 \label{eq:general-defects}\\
 U_p^*X_{\calM}U_p&=[S_z^*,S_w],&
 U_p^*C_{\calM}U_p
 &=I-S_zS_z^*-S_wS_w^*+S_zS_wS_z^*S_w^*.
 \label{eq:general-operators}
\end{align}
\item If $h\in H_m$ and $g\in H_n$, then
\begin{equation}\label{eq:general-block-pairing}
 \langle h,g\rangle_{\nu_p}
 =\frac1{\|p\|_2^2}
   \sum_{\substack{a\leq r,s\leq b\\m+r=n+s}}
   \langle p_rh,p_sg\rangle_{H^2(\D^2)}.
\end{equation}
Consequently $H_m\perp H_n$ in $\mathscr H_p$ whenever
$|m-n|>b-a$.
\item The spaces $H_0,H_1,\ldots$ are mutually orthogonal in
$\mathscr H_p$ if and only if $p$ is homogeneous.
\end{enumerate}
\end{theorem}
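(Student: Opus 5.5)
For (i) I will start from the isometry identity on polynomials: for $h,g\in\C[z,w]$,
\[
\langle U_ph,U_pg\rangle_{\HH}=\frac{1}{\|p\|_2^2}\int_{\T^2}|p|^2h\overline g\,\dd m=\langle h,g\rangle_{\nu_p},
\]
because the $\HH$ inner product of polynomials is the $L^2(\T^2)$ inner product of their boundary values. Hence $U_p$ extends isometrically to the closure $\mathscr H_p$. Its range is closed and contains $p\C[z,w]$, which is dense in $[p]$ by definition of the principal submodule. So the range equals $\calM$ and $U_p$ is unitary. The intertwining relations $U_pS_z=R_zU_p$ and $U_pS_w=R_wU_p$ hold trivially on polynomials, since $p\cdot zh=z\cdot ph$, and then extend by continuity.

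For (ii) I will simply conjugate. Unitary equivalence gives $U_p^*R_zU_p=S_z$ and $U_p^*R_z^*U_p=S_z^*$, and the same for $w$. Then $P_z=I-R_zR_z^*$ is carried to $I-S_zS_z^*=Q_z$; the latter is the projection onto $\mathscr E_z$ because $S_z$ is an isometry. The formulas for $X_\calM$ and $C_\calM$ follow by substituting these relations into their definitions.

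For (iii) I will expand $p=\sum_r p_r$ inside $\langle h,g\rangle_{\nu_p}=\|p\|_2^{-2}\langle ph,pg\rangle_{\HH}$. This gives the double sum of $\langle p_rh,p_sg\rangle$. The term $p_rh$ lies in $H_{m+r}$ and $p_sg$ lies in $H_{n+s}$, and distinct $H_k$ are orthogonal in $\HH$, so only the terms with $m+r=n+s$ survive. If $|m-n|>b-a$, then $|r-s|\le b-a$ makes $m+r=n+s$ impossible, and the pairing vanishes.

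For (iv), the direction "homogeneous implies orthogonal" is the case $a=b$ of (iii). The converse is the main step. Suppose $a<b$. I will test $h=1\in H_0$ against $g\in H_{b-a}$. In (iii) with $m=0$ and $n=b-a$, the constraint $r=s+b-a$ with $a\le r,s\le b$ forces $r=b$ and $s=a$. The pairing is therefore $\|p\|_2^{-2}\langle p_b,p_ag\rangle_{\HH}$. I will choose $g$ to be the homogeneous polynomial with $p_ag=$ the orthogonal projection of $p_b$ onto $p_aH_{b-a}$.

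The obstacle is making sure this is nonzero, that is, that $p_b\not\perp p_aH_{b-a}$ in $H_b$; a priori it might happen. If it does fail, I will vary the test degrees. Take $h=z^iw^{j}\in H_m$ and $g\in H_{m+b-a}$; the pairing becomes $\langle p_bh,p_ag\rangle$. Orthogonality of all layers would then force $p_bH_m\perp p_aH_{m+b-a}$ for every $m$.

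My planned contradiction is to pick $m$ large and compare dimensions. Both spaces sit inside $H_{m+b}$, of dimension $m+b+1$. Their dimensions are $m+1$ and $m+b-a+1$, which sum to $2m+b-a+2$. This exceeds $m+b+1$ as soon as $m>a-1$. Two subspaces whose dimensions sum to more than the ambient dimension must meet nontrivially, hence cannot be orthogonal. This contradiction shows that non-homogeneous $p$ fails orthogonality, at the layers $H_m$ and $H_{m+b-a}$ with $m\ge a$.
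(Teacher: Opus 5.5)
Your proof is correct, and parts (i)--(iii) follow the paper's argument essentially verbatim. The only divergence is the converse in (iv).

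The paper uses an explicit witness. It pairs $h=p_b\in H_b$ with $g=p_a\in H_a$. The constraint $b+r=a+s$ with $a\leq r,s\leq b$ forces $r=a$, $s=b$, so the pairing collapses to $\|p_ap_b\|_2^2/\|p\|_2^2>0$.

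Your dimension count is a non-constructive version of the same fact. At $m=a$, the subspaces $p_bH_a$ and $p_aH_b$ of $H_{a+b}$ visibly share the non-zero vector $p_ap_b$, which is exactly the paper's witness. Multiplying by any $u\in H_k$ gives the common vector $p_ap_bu\in p_bH_{a+k}\cap p_aH_{b+k}$, which recovers your full range $m=a+k\geq a$. The explicit route is one line and gives a concrete positive value of the pairing; your route avoids having to guess the witness.

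Your caution about the first test with $h=1$ is justified, since it can genuinely fail. For $p=z+w^2$ one has $p_b=w^2\perp zH_1=p_aH_{b-a}$, so $H_0\perp H_1$ in $\mathscr H_p$ even though $p$ is not homogeneous. That detour can simply be dropped.
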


\begin{proof}
For a polynomial $h$,
\[
 \left\|\frac p{\|p\|_2}h\right\|_{H^2(\D^2)}^2
 =\int_{\T^2}|h|^2\,\dd\nu_p=\|h\|_{\nu_p}^2.
\]
The range of the extension of \eqref{eq:general-unitary} is the closure of
$p\C[z,w]$, namely $[p]$.  This proves (i), including the intertwining
relations.  Part (ii) follows by taking orthogonal complements of the ranges
of the two intertwined isometries and then substituting the intertwining
relations into the definitions of $X_{\calM}$ and $C_{\calM}$.

For (iii), expand $p$ into its homogeneous parts.  Since $p_rh$ and $p_sg$
have total degrees $m+r$ and $n+s$, respectively,
\[
 \langle h,g\rangle_{\nu_p}
 =\frac{\langle ph,pg\rangle_{H^2(\D^2)}}{\|p\|_2^2}
 =\frac1{\|p\|_2^2}
   \sum_{m+r=n+s}\langle p_rh,p_sg\rangle,
\]
which is \eqref{eq:general-block-pairing}.  The displayed degree constraint
implies $|m-n|=|s-r|\leq b-a$ whenever a summand is non-zero.

If $a=b$, formula \eqref{eq:general-block-pairing} gives the mutual
orthogonality in (iv).  Conversely, suppose $a<b$.  Take $h=p_b\in H_b$ and
$g=p_a\in H_a$.  In \eqref{eq:general-block-pairing} the relation
$b+r=a+s$, with $a\leq r,s\leq b$, forces $r=a$ and $s=b$.  Hence
\[
 \langle p_b,p_a\rangle_{\nu_p}
 =\frac{\langle p_ap_b,p_bp_a\rangle}{\|p\|_2^2}
 =\frac{\|p_ap_b\|_2^2}{\|p\|_2^2}>0.
\]
Thus two distinct total-degree layers fail to be orthogonal.
\end{proof}

The theorem gives a concrete finite-band replacement for the scalar Toeplitz
matrices of Section~\ref{sec:toeplitz}.  To state it, write
\[
 p(z,w)=\sum_{\gamma\in A}c_\gamma z^{\gamma_1}w^{\gamma_2},
 \qquad A\subset\Nzero^2\text{ finite},
\]
and, for $\alpha,\beta\in\Nzero^2$, let
$G_p(\alpha,\beta)=\langle z^{\alpha_1}w^{\alpha_2},
z^{\beta_1}w^{\beta_2}\rangle_{\nu_p}$.

\begin{corollary}[Doubly Toeplitz moment matrix]
\label{cor:general-moment-matrix}
The moment matrix of $\nu_p$ is
\begin{equation}\label{eq:general-moment-formula}
 G_p(\alpha,\beta)
 =\frac1{\|p\|_2^2}
   \sum_{\substack{\gamma,\delta\in A\\
                    \alpha+\gamma=\beta+\delta}}
       c_\gamma\overline{c_\delta}.
\end{equation}
In particular, $G_p(\alpha,\beta)=0$ unless
$\beta-\alpha\in A-A$.  It is a doubly Toeplitz matrix with finite Fourier
bandwidth, and its total-degree block bandwidth is at most $b-a$.
\end{corollary}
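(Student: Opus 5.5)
The plan is to compute $G_p(\alpha,\beta)$ directly from the definition of $\nu_p$, and then read off the support, Toeplitz and banding properties from the resulting formula.

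\emph{The formula.} By the isometry in Theorem~\ref{thm:general-weighted-model}(i),
\[
 G_p(\alpha,\beta)=\frac{\langle z^{\alpha_1}w^{\alpha_2}p,\;z^{\beta_1}w^{\beta_2}p\rangle_{\HH}}{\|p\|_2^2}.
\]
Expanding $p=\sum_{\gamma\in A}c_\gamma z^{\gamma_1}w^{\gamma_2}$ in both slots, we use orthonormality of the monomials in $\HH$. The pair $(\gamma,\delta)$ then contributes $c_\gamma\overline{c_\delta}$ exactly when $\alpha+\gamma=\beta+\delta$, and $0$ otherwise. This gives \eqref{eq:general-moment-formula}.

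\emph{Support and Toeplitz structure.} If the sum is non-empty, then $\beta-\alpha=\gamma-\delta\in A-A$. The sum depends only on $\beta-\alpha$, so $G_p(\alpha,\beta)=\widehat{\nu_p}$ evaluated at a function of $\beta-\alpha$. Equivalently, $G_p(\alpha,\beta)=\int z^{\alpha_1-\beta_1}w^{\alpha_2-\beta_2}\,\dd\nu_p$. Ordering $\Nzero^2$ lexicographically, $G_p$ is then a block Toeplitz matrix (indexed by $\alpha_1-\beta_1$) whose blocks are Toeplitz (indexed by $\alpha_2-\beta_2$); this is the doubly Toeplitz structure. Its Fourier support $A-A$ is finite, which is the finite bandwidth claim.

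\emph{Total-degree bandwidth.} For $\gamma,\delta\in A$ we have $|\gamma|,|\delta|\in[a,b]$. Hence $\bigl||\beta|-|\alpha|\bigr|=\bigl||\gamma|-|\delta|\bigr|\leq b-a$ whenever the entry is non-zero. This also agrees with Theorem~\ref{thm:general-weighted-model}(iii).

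None of the steps is a genuine obstacle. The only point needing care is the bookkeeping of which variable is conjugated, since the inner product is linear in the first slot; this is what determines that the coefficient is $c_\gamma\overline{c_\delta}$ and not its conjugate.
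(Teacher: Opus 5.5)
Your proposal is correct and matches the paper's proof: expand both copies of $p$ in $\langle z^{\alpha_1}w^{\alpha_2}p,\,z^{\beta_1}w^{\beta_2}p\rangle_{\HH}/\|p\|_2^2$, use orthonormality of the monomials, and read the support, doubly Toeplitz and banding properties off the exponent identity $\alpha+\gamma=\beta+\delta$. The only cosmetic difference is that you derive the total-degree bandwidth directly from $|\gamma|,|\delta|\in[a,b]$, whereas the paper quotes Theorem~\ref{thm:general-weighted-model}(iii).
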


\begin{proof}
Expanding the two copies of $p$ in
$\langle pz^{\alpha_1}w^{\alpha_2},
pz^{\beta_1}w^{\beta_2}\rangle/\|p\|_2^2$ gives
\eqref{eq:general-moment-formula}.  The remaining assertions follow at once
from the equality of the two exponent vectors and
Theorem~\ref{thm:general-weighted-model}(iii).
\end{proof}

\subsection{Bivariate orthogonal polynomials and the core spectrum}

The moment matrix in Corollary~\ref{cor:general-moment-matrix} is not merely
a device for computing norms.  Its orthogonal-polynomial connection matrices
determine the spectrum of the core.  The relevant object is the relative
position of the two edge spaces $\mathscr E_z$ and $\mathscr E_w$.

Define the weighted-model core operator
\begin{equation}\label{eq:general-weighted-core}
 \mathbf C_p=I-S_zS_z^*-S_wS_w^*
        +S_zS_wS_z^*S_w^*.
\end{equation}
By Theorem~\ref{thm:general-weighted-model}, it is unitarily equivalent to
$C_{[p]}$.  Introduce the edge cross-Gram operator
\begin{equation}\label{eq:Gamma-p}
 \Gamma_p=Q_w|_{\mathscr E_z}:
 \mathscr E_z\longrightarrow\mathscr E_w.
\end{equation}

\begin{theorem}[Bivariate cross-Gram model for the core]
\label{thm:general-core-Gram}
The space $\mathscr E_z$ reduces $\mathbf C_p^2$, and
\begin{equation}\label{eq:core-square-Gram}
 \mathbf C_p^2|_{\mathscr E_z}
 =Q_zQ_wQ_z|_{\mathscr E_z}
 =\Gamma_p^*\Gamma_p.
\end{equation}
If $(e_i)$ and $(f_j)$ are arbitrary orthonormal bases of
$\mathscr E_z$ and $\mathscr E_w$, respectively, then the matrix of
$\Gamma_p$ is the bivariate cross-Gram matrix
\begin{equation}\label{eq:edge-cross-Gram}
 \big(\langle e_i,f_j\rangle_{\nu_p}\big)_{j,i}.
\end{equation}

Assume, in particular, that $\mathbf C_p$ is compact.  If
$s\in(0,1)$ is a singular value of $\Gamma_p$ of multiplicity $m$, then
$s$ and $-s$ are eigenvalues of $\mathbf C_p$, each of multiplicity $m$.
The endpoint eigenspaces are
\begin{align}
 \ker(\mathbf C_p-I)
 &=\mathscr H_p\ominus(z\mathscr H_p+w\mathscr H_p),
 \label{eq:core-plus-one}\\
 \ker(\mathbf C_p+I)
 &=(z\mathscr H_p\cap w\mathscr H_p)\ominus zw\mathscr H_p.
 \label{eq:core-minus-one}
\end{align}
Consequently, the complete non-zero core spectrum is determined by
$\Gamma_p$ together with the two endpoint spaces.
\end{theorem}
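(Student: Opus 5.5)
The plan is to use the difference-of-projections form of the core, exactly as in \eqref{eq:core-difference}, now in the weighted model. Since $S_z,S_w$ are commuting isometries on $\mathscr H_p$, one has
\[
 \mathbf C_p=Q_z-S_wQ_zS_w^*=Q_z-Q',\qquad Q'=S_wQ_zS_w^*,
\]
where $Q'$ is the projection onto $w\mathscr E_z$. I will first record the complementary identity obtained by expanding symmetrically: $\mathbf C_p=Q_w-S_zQ_wS_z^*$. The key structural fact is that $w\mathscr E_z$ is the orthogonal complement of $\mathscr E_w$ inside $\mathscr E_z+w\mathscr E_z\subset\mathscr H_p\ominus zw\mathscr H_p$. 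More precisely, $\mathscr H_p\ominus zw\mathscr H_p=\mathscr E_w\oplus w\mathscr E_z$, since $\mathscr H_p=\mathscr E_w\oplus w\mathscr H_p$ and $w\mathscr H_p=w\mathscr E_z\oplus wz\mathscr H_p$. Also $\mathscr E_z\subset\mathscr H_p\ominus zw\mathscr H_p$. Writing $P_0$ for the projection onto $\mathscr H_p\ominus zw\mathscr H_p$, we get $Q'=P_0-Q_w$ on the range of $P_0$, and $\mathbf C_p=Q_z-P_0+Q_w$.

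With this formula, for $x\in\mathscr E_z$ we have $P_0x=x$, hence $\mathbf C_px=Q_wx$. Symmetrically, for $y\in\mathscr E_w$ we get $\mathbf C_py=Q_zy$. Therefore $\mathbf C_p^2x=\mathbf C_p(Q_wx)=Q_zQ_wx$, since $Q_wx\in\mathscr E_w$. This shows that $\mathscr E_z$ is invariant under the self-adjoint operator $\mathbf C_p^2$, hence reducing, and that $\mathbf C_p^2|_{\mathscr E_z}=Q_zQ_wQ_z|_{\mathscr E_z}=\Gamma_p^*\Gamma_p$. Here I use that $\Gamma_p^*=Q_z|_{\mathscr E_w}$. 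The matrix statement \eqref{eq:edge-cross-Gram} is immediate from $\langle\Gamma_pe_i,f_j\rangle=\langle e_i,f_j\rangle$.

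For the spectral part, assume $\mathbf C_p$ is compact. On the subspace $\mathscr K=\mathscr E_z+\mathscr E_w$, which is invariant because $\mathbf C_p$ swaps the two pieces as above, $\mathbf C_p$ acts like the two-projection operator $Q_z+Q_w-I$ restricted to $\mathscr K$ (up to the identity on $\mathscr E_z\cap\mathscr E_w$). I would then apply Halmos' two-subspace decomposition, or argue directly with Schmidt pairs. If $\Gamma_pe=sf$ and $\Gamma_p^*f=se$ with $0<s<1$, then on $\operatorname{span}\{e,f\}$ the operator maps $e\mapsto sf$ and $f\mapsto se$. Thus $e\pm f$ are eigenvectors with eigenvalues $\pm s$, and they are linearly independent since $s<1$. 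This gives multiplicity $m$ for each sign.

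Outside $\mathscr K$ we have $\mathbf C_p=-(P_0-Q_z-Q_w)$ on $P_0\mathscr H_p\ominus\mathscr K$, which yields eigenvalue $-1$; the operator vanishes on $zw\mathscr H_p$. The space where eigenvalue $1$ occurs is $\mathscr E_z\cap\mathscr E_w=\mathscr H_p\ominus(z\mathscr H_p+w\mathscr H_p)$. The $-1$ eigenspace is $P_0\mathscr H_p\ominus(\mathscr E_z+\mathscr E_w)$, which equals $(z\mathscr H_p\cap w\mathscr H_p)\ominus zw\mathscr H_p$ after taking complements inside $P_0\mathscr H_p$ using closedness. The main obstacle I expect is this last identification, together with handling closures and sums of non-closed subspaces. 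Compactness should resolve it: it makes $\Gamma_p$ compact off the eigenvalue $1$, so $\mathscr E_z+\mathscr E_w$ is closed modulo finite-dimensional pieces. I would check carefully that the $\pm1$ parts and the $(0,1)$ parts exhaust the nonzero spectrum.
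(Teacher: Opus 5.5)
Your argument is correct, and it organizes the operator differently from the paper.

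The paper works with the pair $\mathscr E_z$, $w\mathscr E_z$. It writes $\mathbf C_p=Q_z-S_wQ_zS_w^*$ and uses $S_w^*\mathscr E_z\subset\mathscr E_z$ together with $(P-Q)^2=P+Q-PQ-QP$ to see that $Q_z$ commutes with $\mathbf C_p^2$. It then computes $\mathbf C_p^2h=Q_zQ_wh$ on $\mathscr E_z$, and cites Halmos' two-subspace theorem for the $\pm s$ pairing and the endpoint eigenspaces.

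You use instead $\mathbf C_p=Q_z+Q_w-P_0$, where $P_0=I-(S_zS_w)(S_zS_w)^*$ is the projection onto $\mathscr H_p\ominus zw\mathscr H_p$. This identity actually falls out of the definition of $\mathbf C_p$ in one line, without the detour through $\mathscr E_w\oplus w\mathscr E_z$. Since $Q_z,Q_w\leq P_0$, it shows that $\mathbf C_p$ vanishes on $zw\mathscr H_p$ and equals the two-subspace operator $Q_z+Q_w-I$ of the pair $(\mathscr E_z,\mathscr E_w)$ on $\operatorname{Ran}P_0$. What this buys is the swap structure $\mathbf C_p|_{\mathscr E_z}=\Gamma_p$, $\mathbf C_p|_{\mathscr E_w}=\Gamma_p^*$. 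The squared identity becomes immediate, and the $\pm s$ eigenvectors $e\pm f$ are written down directly from Schmidt pairs of $\Gamma_p$. The link between the core and the cross-Gram operator is thus explicit rather than routed through Halmos' canonical form.

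\medskip

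\textbf{A misstatement.} Your opening claim, that $w\mathscr E_z$ is the complement of $\mathscr E_w$ inside $\mathscr E_z+w\mathscr E_z$, is wrong as stated. For $p=1$ one has $\mathscr E_z+w\mathscr E_z=H^2(w)$, which does not contain $\mathscr E_w=H^2(z)$. The precise decomposition you use next is correct, so nothing downstream is affected.

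\textbf{The anticipated obstacle does not exist.} For any sum, closed or not,
\[
\operatorname{Ran}P_0\ominus(\mathscr E_z+\mathscr E_w)=\operatorname{Ran}P_0\cap z\mathscr H_p\cap w\mathscr H_p=(z\mathscr H_p\cap w\mathscr H_p)\ominus zw\mathscr H_p,
\]
because $\mathscr E_z^\perp=z\mathscr H_p$ and $\mathscr E_w^\perp=w\mathscr H_p$. Compactness is used only to know that the spectrum consists of eigenvalues.

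\textbf{The checks you defer are short from your formula.}
\begin{enumerate}[label=\textup{(\arabic*)}]
\item $\mathbf C_p$ commutes with $P_0$ and vanishes on $\ker P_0$, so an eigenvector for a non-zero eigenvalue lies in $\operatorname{Ran}P_0$. There $\|\mathbf C_p\|\leq1$, since it is a difference of two projections.
\item On $\operatorname{Ran}P_0$, $\mathbf C_px=\pm x$ forces $\langle Q_zx,x\rangle+\langle Q_wx,x\rangle$ to equal $2\|x\|^2$ or $0$. This gives exactly the two endpoint spaces.
\item For $0<|\lambda|<1$, the map $x\mapsto Q_zx$ injects $\ker(\mathbf C_p-\lambda)$ into $\ker(\Gamma_p^*\Gamma_p-\lambda^2)$. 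Indeed, if $Q_zx=0$ then $x=Q_wx/(1+\lambda)\in\mathscr E_w$, so $\lambda x=\mathbf C_px=Q_zx=0$.
\end{enumerate}
Combined with your $m$ independent vectors $e_i\pm f_i$, item (3) yields the exact multiplicity $m$ and shows that no other non-zero eigenvalues occur.
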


\begin{proof}
Because $S_z$ and $S_w$ are commuting isometries,
\begin{equation}\label{eq:general-core-proj-difference}
 \mathbf C_p=Q_z-S_wQ_zS_w^*.
\end{equation}
The second term is the orthogonal projection onto $w\mathscr E_z$.
Moreover, $S_z^*$ and $S_w^*$ commute, and hence
\[
 S_w^*\mathscr E_z\subset\mathscr E_z.
\]
Writing $P=Q_z$ and $Q=S_wQ_zS_w^*$, the standard identity
$(P-Q)^2=P+Q-PQ-QP$ shows that $P$ commutes with $(P-Q)^2$.
Thus $\mathscr E_z$ reduces $\mathbf C_p^2$, and for $h\in\mathscr E_z$,
\begin{align*}
 \mathbf C_p^2h
 &=h-Q_zS_wQ_zS_w^*h\\
 &=h-Q_zS_wS_w^*h
 =Q_zQ_wh.
\end{align*}
This proves the first equality in \eqref{eq:core-square-Gram}.  Since
$\Gamma_p^*=Q_z|_{\mathscr E_w}$, the second follows.
Equation \eqref{eq:edge-cross-Gram} is the matrix definition of the
orthogonal projection $Q_w|_{\mathscr E_z}$.

For completeness, \eqref{eq:general-core-proj-difference} is a difference
of two orthogonal projections.  The two-projection decomposition
\cite{HalmosTwoSubspaces} gives the symmetric eigenvalue pairs in $(-1,1)$;
their squares are the eigenvalues of \eqref{eq:core-square-Gram}.  The
$+1$ eigenspace is
\[
 \mathscr E_z\cap(w\mathscr E_z)^\perp
 =\mathscr E_z\cap\mathscr E_w,
\]
which is \eqref{eq:core-plus-one}.  The $-1$ eigenspace is
\[
 z\mathscr H_p\cap(w\mathscr E_z)
 =(z\mathscr H_p\cap w\mathscr H_p)\ominus zw\mathscr H_p,
\]
which proves \eqref{eq:core-minus-one}.
\end{proof}

\begin{remark}
For the polynomial principal submodules considered here, compactness is not
an additional hypothesis: algebraic submodules of $\HH$ are
Hilbert--Schmidt by the results in \cite{Yang1999,Yang2001}; see also the
recent submodule--quotient comparison in \cite{WangZhaoZhu}.  We retained the
conditional wording in Theorem~\ref{thm:general-core-Gram} because the
operator identity \eqref{eq:core-square-Gram} holds for arbitrary weighted
principal models, including non-polynomial generators for which compactness
may fail.
\end{remark}

\begin{remark}[Geometric meaning]
The singular values of $\Gamma_p$ are the canonical correlations between the
two edge wandering spaces.  Equivalently, they are the sines of the principal
angles between $\mathscr E_z$ and $w\mathscr E_z$.  The core is the difference
of the projections onto the latter two spaces, so its positive and negative
eigenvalues record the same angles with opposite orientations.  Homogeneity
turns every total-degree edge into a line, and
\eqref{eq:core-square-Gram} then reduces exactly to the scalar Verblunsky
formula in Theorem~\ref{thm:core-spectrum}.
\end{remark}

The connection with bivariate orthogonal polynomials can also be stated in
matrix form.  Choose any ordering of the analytic monomials and apply
Gram--Schmidt in $L^2(\nu_p)$ to obtain a complete orthonormal polynomial
system $(\Phi_j)$.  Let
\begin{equation}\label{eq:bivariate-mult-matrices}
 Z_{ij}=\langle z\Phi_j,\Phi_i\rangle_{\nu_p},
 \qquad
 W_{ij}=\langle w\Phi_j,\Phi_i\rangle_{\nu_p}.
\end{equation}

\begin{corollary}[Matrix reconstruction]
\label{cor:bivariate-reconstruction}
The doubly Toeplitz moment matrix $G_p$ determines $Z$ and $W$, and
\begin{equation}\label{eq:core-bivariate-matrix}
 [\mathbf C_p]_{(\Phi_j)}
 =I-ZZ^*-WW^*+ZWZ^*W^*.
\end{equation}
Furthermore,
\[
 \mathscr E_z=\ker Z^*,\qquad \mathscr E_w=\ker W^*,
\]
and the singular values of the connection matrix between orthonormal bases of
these two kernels give the absolute values of the core eigenvalues in
$(-1,1)$.
\end{corollary}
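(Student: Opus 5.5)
The plan is to work entirely inside the weighted model $\mathscr H_p$, with the complete orthonormal polynomial system $(\Phi_j)$ serving as a basis. The first step is to check that $(\Phi_j)$ really is an orthonormal basis of $\mathscr H_p$. Gram--Schmidt applied to the analytic monomials in $L^2(\nu_p)$ produces a system whose span is $\C[z,w]$, which is dense in $P^2(\nu_p)$ by definition. There is one subtlety: linear dependence among monomials in $L^2(\nu_p)$ would make some steps degenerate. It cannot occur, because a polynomial $h$ with $\|h\|_{\nu_p}=0$ satisfies $ph=0$ in $H^2(\D^2)$, hence $h=0$ since $p\neq0$. Equivalently, $U_p$ is injective on polynomials by Theorem~\ref{thm:general-weighted-model}(i).

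For the claim that $G_p$ determines $Z$ and $W$, I would argue as follows. Each $\Phi_j$ is a finite combination of monomials, and the coefficients come from the Gram--Schmidt recursion, which uses only the entries $G_p(\alpha,\beta)$. The entry $Z_{ij}=\langle z\Phi_j,\Phi_i\rangle_{\nu_p}$ is then a finite bilinear expression in those coefficients and in entries $G_p(\alpha+e_1,\beta)$. The same holds for $W$, with $e_2$ in place of $e_1$. Formula \eqref{eq:general-moment-formula} makes this explicit.

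Since $(\Phi_j)$ is an orthonormal basis, $Z$ and $W$ are the matrices of the bounded operators $S_z$ and $S_w$. The matrices of the adjoints are the conjugate transposes $Z^*$ and $W^*$, and matrix products represent operator products. Substituting into the definition \eqref{eq:general-weighted-core} gives \eqref{eq:core-bivariate-matrix}. All the products converge entrywise because the operators are bounded and each column is an $\ell^2$ sequence. Next, $\mathscr E_z=\mathscr H_p\ominus z\mathscr H_p$ is the orthogonal complement of the range of the isometry $S_z$, which is closed, so it equals $\ker S_z^*$. In coordinates this is $\ker Z^*\subset\ell^2$, and likewise $\mathscr E_w=\ker W^*$.

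The spectral statement will follow from Theorem~\ref{thm:general-core-Gram}. Let $(e_i)$ and $(f_j)$ be orthonormal bases of $\ker Z^*$ and $\ker W^*$. The connection matrix $(\langle e_i,f_j\rangle)_{j,i}$ is the matrix of $\Gamma_p$ by \eqref{eq:edge-cross-Gram}. Its singular values in $(0,1)$ are therefore exactly the values $s$ for which $\pm s$ are eigenvalues of $\mathbf C_p$ in $(-1,1)$, with matching multiplicities. In the converse direction, \eqref{eq:core-square-Gram} identifies $\mathbf C_p^2|_{\mathscr E_z}$ with $\Gamma_p^*\Gamma_p$.

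The main obstacle is exactly this converse: showing that every eigenvalue $\lambda\in(-1,1)\setminus\{0\}$ of $\mathbf C_p$ actually appears, that is, that $|\lambda|$ is a singular value of $\Gamma_p$. I would handle it with the two-projection structure \eqref{eq:general-core-proj-difference}. There $\mathbf C_p=P-Q$ with $P=Q_z$, and $(P-Q)^2$ commutes with $P$. Take an eigenvector $h$ of $\mathbf C_p$ with eigenvalue $\lambda$, $0<|\lambda|<1$. Then $Ph$ is an eigenvector of $\mathbf C_p^2$ with eigenvalue $\lambda^2$. Moreover $Ph\neq0$: if $Ph=0$, then $\mathbf C_p h=-Qh$, which would force $\lambda=-1$. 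So $\lambda^2$ is an eigenvalue of $\Gamma_p^*\Gamma_p$. Compactness, guaranteed by the remark on algebraic submodules, ensures that these eigenvalues are attained with finite multiplicities.
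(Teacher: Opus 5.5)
Your proposal is correct and follows the paper's own route. The paper likewise reads off $Z,W$ from the Gram--Schmidt coefficients determined by $G_p$, obtains \eqref{eq:core-bivariate-matrix} by writing \eqref{eq:general-weighted-core} in the basis $(\Phi_j)$, identifies the defect spaces with $\ker Z^*$ and $\ker W^*$, and quotes Theorem~\ref{thm:general-core-Gram} for the spectral assertion. Your extra checks are valid and simply make explicit what the paper leaves to the definitions and to the two-subspace decomposition inside the proof of Theorem~\ref{thm:general-core-Gram}: Gram--Schmidt never degenerates because $ph=0$ forces $h=0$, and $Q_z$ is injective on each eigenspace of $\mathbf C_p$ with $0<|\lambda|<1$.
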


\begin{proof}
The moment matrix determines the orthonormalization coefficients and hence
the two multiplication matrices in \eqref{eq:bivariate-mult-matrices}.
Equation \eqref{eq:core-bivariate-matrix} is
\eqref{eq:general-weighted-core} in this basis.  The two kernel identities
follow from the definitions of the defect spaces, and the last assertion is
Theorem~\ref{thm:general-core-Gram}.
\end{proof}

Lexicographic and reverse-lexicographic orthogonal vector polynomials provide
two natural finite-section bases for the edge spaces.  The recurrence and
connection matrices of Geronimo and Woerdeman
\cite{GeronimoWoerdemanBicircle} therefore contain enough information to
reconstruct \eqref{eq:edge-cross-Gram}.  What is not automatic is a canonical
ordering-independent formula which is also invariant under a change of
polynomial generator by a cyclic factor.  This distinction motivates
Problem~\ref{prob:block-model} below.  For general background on multivariate
moment matrices and orthogonalization, see \cite{DunklXu}.

\begin{example}[The first coupled blocks]\label{ex:one-plus-z-plus-w}
Take $p(z,w)=1+z+w$.  Here $\|p\|_2^2=3$ and the first moment matrix, indexed
by $1,z,w$, is
\[
 \begin{pmatrix}
  1&1/3&1/3\\
  1/3&1&1/3\\
  1/3&1/3&1
 \end{pmatrix}.
\]
Thus constants are not orthogonal to total degree one, and the two degree-one
monomials are themselves coupled.  This elementary matrix already shows why
one scalar Toeplitz matrix and one Verblunsky sequence cannot describe the
general problem.  On the other hand, its Cholesky factors begin the
bivariate orthogonalization which, through
Corollary~\ref{cor:bivariate-reconstruction}, reconstructs the core.
\end{example}

There is a second obstruction to extending the homogeneous formulas: a
principal submodule does not remember cyclic factors of its generator.

\begin{proposition}[Cyclic factors and failure of the scalar determinant]
\label{prop:cyclic-factor}
If $u\in\C[z,w]$ is cyclic in $H^2(\D^2)$, then
\begin{equation}\label{eq:cyclic-factor}
 [up]=[p]
 \qquad(0\neq p\in\C[z,w]).
\end{equation}
In particular, let $p_\lambda(z,w)=1+\lambda z$ with
$0<|\lambda|<1$.  Then $[p_\lambda]=H^2(\D^2)$ and
\begin{equation}\label{eq:stable-example-operators}
 P_zP_w=P_{\C\one},\qquad X_{[p_\lambda]}=0,
 \qquad C_{[p_\lambda]}=P_{\C\one}.
\end{equation}
Nevertheless, for $q_\lambda(\zeta)=p_\lambda(\zeta,1)$,
\begin{equation}\label{eq:stable-example-mahler}
 \frac{\mathfrak M(q_\lambda)^2}{\|q_\lambda\|_2^2}
 =\frac1{1+|\lambda|^2}<1
 =\det(I-X_{[p_\lambda]}^*X_{[p_\lambda]}).
\end{equation}
Thus the slice and Fredholm--Mahler formulas from the homogeneous case do not
extend verbatim to general polynomial generators.
\end{proposition}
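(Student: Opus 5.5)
The proof splits into three parts: the cyclic-factor identity, the computation of the operators for $p_\lambda$, and the Mahler comparison.

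\textbf{Step 1: the identity $[up]=[p]$.} Since $up\in[p]$ and $[p]$ is a submodule, we get $[up]\subset[p]$. For the reverse inclusion, use cyclicity of $u$: there are polynomials $r_k$ with $r_ku\to1$ in $H^2(\D^2)$. The point to control is that multiplication by the polynomial $p$ is a bounded operator on $H^2(\D^2)$, with norm at most $\sup_{\overline{\D}^2}|p|$. Hence
\[
 r_kup=p\,(r_ku)\longrightarrow p
\]
in $H^2(\D^2)$. Each $r_kup$ lies in $[up]$, so $p\in[up]$, and therefore $[p]\subset[up]$.

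\textbf{Step 2: the example $p_\lambda=1+\lambda z$ with $0<|\lambda|<1$.} Since $|\lambda|<1$, the function $1/p_\lambda$ is holomorphic on a neighbourhood of $\overline{\D}^2$. Its Taylor partial sums $r_k(z)=\sum_{j<k}(-\lambda z)^j$ satisfy $r_kp_\lambda=1-(-\lambda z)^k\to1$. Thus $p_\lambda$ is cyclic, and Step 1 with $p=1$ gives $[p_\lambda]=[1]=H^2(\D^2)$.

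For $\calM=H^2(\D^2)$ the operators are computed directly:
\begin{itemize}
 \item $R_z=M_z$ and $R_w=M_w$.
 \item $P_z$ is the projection onto the functions independent of $z$, that is, onto $\overline{\Span}\{w^j\}$.
 \item $P_w$ is the projection onto $\overline{\Span}\{z^i\}$.
 \item These two projections are diagonal in the monomial basis, so they commute, and $P_zP_w$ is the projection onto their intersection, which is $\C\one$.
 \item Double commutativity of the ambient shifts, $M_z^*M_w=M_wM_z^*$, gives $X=0$.
 \item From \eqref{eq:core-difference}, $C=P_z-M_wP_zM_w^*$. This is the projection onto $\overline{\Span}\{w^j:j\ge0\}$ minus the projection onto $\overline{\Span}\{w^j:j\ge1\}$, which equals $P_{\C\one}$.
\end{itemize}
Since $X=0$, we get $\det(I-X^*X)=1$.

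\textbf{Step 3: the Mahler comparison.} Here $q_\lambda(\zeta)=1+\lambda\zeta$, so $\|q_\lambda\|_2^2=1+|\lambda|^2$. Jensen's formula gives $\mathfrak M(q_\lambda)=1$, because the only root $-1/\lambda$ lies outside $\overline\D$ and the leading coefficient contributes $|\lambda|\max\{1,1/|\lambda|\}=1$. Alternatively, $\log|1+\lambda\zeta|$ is harmonic on $\overline{\D}$, so its mean over $\T$ is its value at $0$, namely $0$. This gives the ratio $1/(1+|\lambda|^2)<1$.

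\textbf{Main obstacle.} The only substantive point is the boundedness of multiplication by $p$ in Step 1, which needs $p$ bounded on $\overline{\D}^2$. This holds automatically because $p$ is a polynomial.
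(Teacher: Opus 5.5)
Your proof is correct and follows the paper's argument. Cyclicity of $u$ together with boundedness of multiplication by $p$ gives $p\in[up]$. For $p_\lambda$, you identify the two wandering spaces of the full Hardy module as $\overline{\Span}\{w^j\}$ and $\overline{\Span}\{z^i\}$, whose common part is $\C\one$, and you evaluate the Mahler measure by Jensen's formula. Your explicit partial sums $r_kp_\lambda=1-(-\lambda z)^k$ simply make concrete the paper's remark that $1/p_\lambda\in H^\infty(\D^2)$ forces cyclicity.
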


\begin{proof}
Choose polynomials $h_n$ with $uh_n\to1$ in $H^2(\D^2)$.  Since
multiplication by $p$ is bounded, $puh_n\to p$.  Hence $p\in[up]$, and
therefore $[p]\subset[up]$; the reverse inclusion is immediate.

For $p_\lambda$, the reciprocal
$(1+\lambda z)^{-1}=\sum_{n\geq0}(-\lambda z)^n$ belongs to
$H^\infty(\D^2)$, so $p_\lambda$ is cyclic.  On the full Hardy module the
coordinate shifts doubly commute, their two wandering spaces are
$H^2(w)$ and $H^2(z)$, and their common part is $\C\one$.  This proves
\eqref{eq:stable-example-operators}.  Finally Jensen's formula gives
$\mathfrak M(1+\lambda\zeta)=1$, while
$\|1+\lambda\zeta\|_2^2=1+|\lambda|^2$, proving
\eqref{eq:stable-example-mahler}.
\end{proof}

The higher invariants also admit a basis-free formulation that remains
meaningful in the general model.  For $k\geq0$ define
\begin{equation}\label{eq:general-Bk}
 B_k(p)=Q_wS_z^{*k}S_w^k\big|_{\mathscr E_z}:
 \mathscr E_z\longrightarrow\mathscr E_w.
\end{equation}

\begin{proposition}[General higher-invariant operator]
\label{prop:general-Bk}
For arbitrary orthonormal bases $(e_i)$ of $\mathscr E_z$ and $(f_j)$ of
$\mathscr E_w$,
\begin{equation}\label{eq:general-sigma-k}
 \|B_k(p)\|_{\HS}^2
 =\sum_{i,j}\left|
   \langle S_w^ke_i,S_z^kf_j\rangle_{\nu_p}\right|^2,
\end{equation}
whenever either side is finite.  The value is independent of the bases and
depends only on the submodule $[p]$.  If $p$ is homogeneous, then
\eqref{eq:general-sigma-k} reduces to $\Sigma_k([p])$ in
\eqref{eq:sigma-k-def}.
\end{proposition}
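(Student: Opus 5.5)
The plan is to compute the Hilbert--Schmidt norm directly in the two bases, then transport everything to $[p]$ by the unitary $U_p$ of Theorem~\ref{thm:general-weighted-model}, and finally specialise to the homogeneous case.

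\emph{Step 1: the basis formula.} Extend $B_k(p)$ by zero on $\mathscr E_z^\perp$. Its Hilbert--Schmidt norm is then the sum of squared matrix entries in any pair of orthonormal bases, and these sums are all equal, as elements of $[0,\infty]$, whenever either one is finite. The $(j,i)$ entry is
\[
 \langle Q_wS_z^{*k}S_w^ke_i,f_j\rangle
 =\langle S_z^{*k}S_w^ke_i,f_j\rangle
 =\langle S_w^ke_i,S_z^kf_j\rangle_{\nu_p},
\]
using $Q_wf_j=f_j$. By Parseval over $(f_j)$ and then over $(e_i)$, the right side of \eqref{eq:general-sigma-k} equals $\sum_i\|Q_wS_z^{*k}S_w^ke_i\|^2=\|B_k(p)\|_{\HS}^2$. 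Since the Hilbert--Schmidt norm of an operator is basis-free, the sum is independent of the chosen bases.

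\emph{Step 2: dependence only on $[p]$.} Conjugate by $U_p$. Theorem~\ref{thm:general-weighted-model}(i)--(ii) gives $U_pS_z=R_zU_p$, $U_pS_w=R_wU_p$ and $U_p^*P_{z,w}U_p=Q_{z,w}$. Because $U_p$ is unitary, $U_pS_z^*=R_z^*U_p$ as well. It follows that
\[
 U_pB_k(p)U_p^*=P_wR_z^{*k}R_w^k\big|_{\calM\ominus z\calM}.
\]
The right-hand side is defined purely in terms of $\calM=[p]$ and its restricted shifts, so the norm depends only on the submodule. In particular, replacing $p$ by $up$ with $u$ cyclic does not change the value, by Proposition~\ref{prop:cyclic-factor}. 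The main obstacle is exactly this point: the weighted model $\mathscr H_p$ itself does depend on the generator, so one must not argue inside $\mathscr H_p$. The unitary transport is what resolves it.

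\emph{Step 3: the homogeneous case.} When $p$ is homogeneous, apply the formula of Step 2 in $[p]$ with the canonical bases of Theorem~\ref{thm:wandering-bases}. Each matrix entry becomes $\langle R_w^ke_m,R_z^kf_n\rangle=\langle w^ke_m,z^kf_n\rangle$. Summing the squared moduli gives exactly \eqref{eq:sigma-k-def}. The finiteness clause is then automatic, since Theorem~\ref{thm:cmv-formula} shows $\Sigma_k([p])<\infty$.
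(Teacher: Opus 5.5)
Your proposal is correct and follows the paper's proof essentially step for step. It uses the same matrix-entry computation with Parseval for basis independence, the same transport by $U_p$ to $P_{\calM\ominus w\calM}R_z^{*k}R_w^k\big|_{\calM\ominus z\calM}$, and the same specialisation to the canonical bases of Theorem~\ref{thm:wandering-bases}, implicitly pulled back by $U_p^*$; one small correction is that Theorem~\ref{thm:cmv-formula} gives finiteness only for $k\geq1$, while the case $k=0$ comes from Corollary~\ref{cor:sigma01}.
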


\begin{proof}
For every $i,j$,
\[
 \langle B_k(p)e_i,f_j\rangle
 =\langle S_z^{*k}S_w^ke_i,f_j\rangle
 =\langle S_w^ke_i,S_z^kf_j\rangle.
\]
Parseval's identity gives \eqref{eq:general-sigma-k} and proves basis
independence.  The unitary $U_p$ from
Theorem~\ref{thm:general-weighted-model} identifies $B_k(p)$ with
$P_{\calM\ominus w\calM}R_z^{*k}R_w^k
\big|_{\calM\ominus z\calM}$, so the operator depends only on $[p]$.
In the homogeneous case use the bases from
Theorem~\ref{thm:wandering-bases}; orthogonality of distinct total-degree
layers removes all terms with unequal indices and gives
\eqref{eq:sigma-k-def}.
\end{proof}

\section{Problems and further directions}\label{sec:problems}

The homogeneous theory gives a completely solvable benchmark: scalar
Verblunsky coefficients govern the two wandering spaces, their projection
product, the cross-commutator, the core, and the higher invariants.  Theorem
\ref{thm:general-weighted-model} and Proposition~\ref{prop:cyclic-factor}
show that the general polynomial problem has two genuinely new features:
finite-band coupling between degrees and invariance under cyclic factors.
The following problems separate these difficulties into testable steps.

\begin{problem}[A canonical block model]\label{prob:block-model}
Starting from the doubly Toeplitz matrix $G_p$ in
\eqref{eq:general-moment-formula}, construct canonical recurrence parameters
that determine $Q_z$, $Q_w$, $B_k(p)$ and the core operator.  Is there a
block-CMV or finite-band unitary realization which reduces to the scalar CMV
matrix of Section~\ref{sec:cmv} when $p$ is homogeneous?  Determine which
lexicographic, reverse-lexicographic or total-degree ordering makes the
resulting parameters intrinsic to the submodule rather than to the chosen
generator.
\end{problem}

The bivariate recurrences of \cite{GeronimoWoerdemanBicircle} provide a
natural point of comparison, but here the required data must simultaneously
encode two defect spaces and be unchanged under the cyclic-factor operation
\eqref{eq:cyclic-factor}.

\begin{problem}[The sharp quotient degree constant]
\label{prob:quotient-degree}
For $D\geq1$, define
\[
 \mathfrak Q_D=
 \sup\bigl\{\|[S_z^*,S_w]\|_{\HS}^2:
 0\neq p\in\C[z,w]\text{ is homogeneous of degree }D\bigr\}.
\]
Theorems~\ref{thm:quotient-singular-values} and
\ref{thm:quotient-unbounded} give
\[
 D-o(D)\leq \mathfrak Q_D
 \leq1+\log\binom{2D}{D}.
\]
Determine the sharp asymptotics of $\mathfrak Q_D$.  In particular, is
$\mathfrak Q_D/D\to1$?  Are powers of a single toral linear factor
asymptotically extremal, and which equality or stability conditions can be
read from the Verblunsky coefficients and the endpoint weights $a_q,b_q$?
\end{problem}

This problem isolates a quantitative issue which does not arise at fixed
degree: Corollary~\ref{cor:quotient-HS} supplies a degree-dependent bound,
whereas the family $(z-w)^D$ proves that no absolute bound is possible.

\begin{problem}[Schatten thresholds and toral zero geometry]
\label{prob:schatten-general}
For $0\neq p\in\C[z,w]$, determine
\[
 \tau(p)=\inf\{r>0:C_{[p]}\in\Sr_r\},
\]
with the convention that the infimum of the empty set is $\infty$.  Express
$\tau(p)$ in terms of the equivalence class of $p$ under insertion or removal
of cyclic polynomial factors, the geometry of $Z(p)\cap\T^2$, and local
intersection or contact multiplicities.  When $0<\tau(p)<\infty$, determine
the possible endpoint membership in the weak Schatten class
$\Sr_{\tau(p),\infty}$.  As a first case, settle the problem for polynomials
whose toral zero set is finite.
\end{problem}

For homogeneous $p$, Theorem~\ref{thm:intro-dictionary} turns this question
into the membership of $(\alpha_n)$ in $\ell^r$.  Problem
\ref{prob:schatten-general} asks for the geometric substitute for that scalar
criterion when the degree blocks interact.

\begin{problem}[A cyclically invariant determinant]\label{prob:det-general}
Assume that $X_{[p]}^*X_{[p]}$ is trace class.  Find an explicit quantity
$\mathcal D([p])$, computable from $p$ but invariant under $p\mapsto up$ for
every cyclic polynomial $u$, such that
\[
 \det(I-X_{[p]}^*X_{[p]})=\mathcal D([p]).
\]
Can $\mathcal D([p])$ be represented as a limit of determinants of finite
sections of $G_p$?  In the homogeneous case it must reduce to
$\mathfrak M(p(\cdot,1))^2/\|p(\cdot,1)\|_2^2$; Proposition
\ref{prop:cyclic-factor} shows that this ratio itself is not the answer in
general.
\end{problem}

The finite-section formulation links this question to multivariable
Fej\'er--Riesz and positive-extension problems; compare
\cite{GeronimoWoerdemanAnnals}.  The essential extra requirement is
cyclic-factor invariance.

\begin{problem}[The quadratic phase diagram]\label{prob:quadratic-family}
For
\[
 p_\theta(z,w)=z^2-2\cos\theta\,zw+w^2,
 \qquad 0<\theta<\pi,
\]
determine the sign of
\[
 \Delta(\theta)=\Sigma_4([p_\theta])-\Sigma_3([p_\theta])
\]
on the whole parameter interval.  Determine all zeros of $\Delta$, and in
particular decide the maximal interval containing $\pi/6$ on which
$\Delta(\theta)>0$.  A satisfactory solution should replace pointwise
computer algebra by a parameter-uniform Schur or CMV certificate.
\end{problem}

\begin{problem}[Multiple toral factors and general higher invariants]
\label{prob:multiple-fh}
For
\[
 p(z,w)=\prod_{j=1}^N(z-e^{i\theta_j}w)^{m_j},
\]
derive asymptotics of the Verblunsky coefficients which retain the relative
phases $\theta_j-\theta_\ell$, and use them to determine the large-$k$
behaviour of $\Sigma_k([p])$.  For a non-homogeneous polynomial, determine
when the intrinsic operator $B_k(p)$ in \eqref{eq:general-Bk} is
Hilbert--Schmidt and whether $\|B_k(p)\|_{\HS}^2$ has a limit or an
asymptotic expansion as $k\to\infty$.
\end{problem}

Together, Problems~\ref{prob:block-model}--\ref{prob:multiple-fh} ask which
parts of the homogeneous scalar OPUC bridge survive in the finite-band model,
and how sharply the quotient estimates depend on degree, separating the
algebraic, Schatten, determinant and high-order aspects.

\vspace{0.3cm}

\noindent\textbf{AI disclosure.}
The research questions and main mathematical ideas in this article originated with the authors. ChatGPT 5.6 Sol (OpenAI) was then used to explore and verify these ideas through symbolic and numerical calculations. In particular, it found and verified the quadratic counterexample in Section 8. It also assisted with the exposition and language editing. The authors reviewed all AI-assisted material and take full responsibility for the results, proofs, references, and final text.

\vspace{0.3cm}

\noindent\textbf{Acknowledgments.}
The authors sincerely thank Professor Penghui Wang of Shandong University and Professor Rongwei Yang of the University at Albany for their encouragement and valuable discussions on this topic.

\appendix

\section{Exact certificates for the quadratic example}\label{app:certificate}

\subsection{Schur iterates}\label{app:schur-certificate}

Let $\eps_m=(-1)^{m+1}$.  The Schur iterates in
Lemma~\ref{lem:six-step} have the form
\begin{equation}\label{eq:schur-table-form}
 f_{6m+r}(\zeta)=
 \frac{A_r+B_r\zeta}{1+C_r\zeta+D_r\zeta^2}.
\end{equation}
The coefficients are as follows; all entries depend on $m$.

\begingroup
\hfuzz=30pt
\vfuzz=40pt
\renewcommand{\arraystretch}{1.10}
\small
\begin{center}
\resizebox{0.82\textwidth}{!}{%
\begin{tabular}{c>{\raggedright\arraybackslash}p{0.185\textwidth}
                  >{\raggedright\arraybackslash}p{0.215\textwidth}
                  @{\hspace{2em}}
                  >{\raggedright\arraybackslash}p{0.22\textwidth}}
\toprule
$r$ & $A_r$ & $B_r$ & $(C_r,D_r)$\\
\midrule
$0$ & $\displaystyle\frac{2\eps_m\sqrt3}{36m(m+1)+5}$
& $\displaystyle\frac{\eps_m(6m-1)}{36m(m+1)+5}$
& $\displaystyle C_r=-\frac{\sqrt3(36m^2+30m+2)}{36m(m+1)+5}$,
  $\displaystyle D_r=\frac{36m^2+24m+1}{36m(m+1)+5}$\\
$1$ & $\displaystyle\frac{\eps_m(6m+7)}{36m^2+48m+13}$
& $\displaystyle-\frac{2\eps_m\sqrt3}{36m^2+48m+13}$
& $\displaystyle C_r=-\frac{\sqrt3(36m^2+42m+8)}{36m^2+48m+13}$,
  $\displaystyle D_r=\frac{36m^2+36m+5}{36m^2+48m+13}$\\
$2$ & $\displaystyle\frac{\eps_m\sqrt3}{6m+4}$
& $\displaystyle-\frac{\eps_m(6m+7)}{(6m+4)(6m+6)}$
& $\displaystyle C_r=-\frac{(6m+3)\sqrt3}{6m+4}$,
  $\displaystyle D_r=\frac{36m^2+48m+13}{(6m+4)(6m+6)}$\\
$3$ & $\displaystyle\frac{\eps_m}{3m+3}$
& $\displaystyle-\frac{\eps_m\sqrt3}{6(m+1)}$
& $\displaystyle C_r=-\frac{(6m+5)\sqrt3}{6(m+1)}$,
  $\displaystyle D_r=\frac{3m+2}{3m+3}$\\
$4$ & $\displaystyle\frac{\eps_m\sqrt3}{6m+8}$
& $\displaystyle-\frac{\eps_m}{3m+4}$
& $\displaystyle C_r=-\frac{(6m+7)\sqrt3}{6m+8}$,
  $\displaystyle D_r=\frac{3m+3}{3m+4}$\\
$5$ & $\displaystyle\frac{\eps_m(6m+5)}{\Delta_m}$
& $\displaystyle-\frac{6\eps_m(m+1)\sqrt3}{\Delta_m}$
& $\displaystyle C_r=-\frac{\sqrt3(36m^2+90m+54)}{\Delta_m}$,
  $\displaystyle D_r=\frac{36m^2+84m+48}{\Delta_m}$\\
\bottomrule
\end{tabular}}
\end{center}
\endgroup

Here the abbreviation in the last row is
$\Delta_m=36m^2+96m+61$.

Inserting each row into \eqref{eq:Schur-algorithm} gives the next row after
cancelling a common scalar; the last row gives the first row with $m$ replaced
by $m+1$.  Thus the table is also an exact rational-function certificate for
Lemma~\ref{lem:six-step}.

\subsection{Alternating-CMV blocks}\label{app:cmv-certificate}

For $m\geq1$, exact multiplication of the alternating CMV blocks and
substitution of \eqref{eq:a0}--\eqref{eq:a5} gives the following values.
The common sign $\eps_m=(-1)^{m+1}$ disappears after taking absolute
squares, but is retained here so that the unsquared formulas are exact.

\begingroup
\small
\begin{align*}
F_{3,0}(m)&=\eps_m\frac{6\sqrt3m}{36m^2+24m+1},\\
F_{4,0}(m)&=\eps_m\frac{12(36m^3+24m^2+m+1)}
 {(6m+1)(6m+5)(36m^2+24m+1)},\\[0.4ex]
F_{3,1}(m)&=\eps_m\frac{12(2m+1)(18m^2+15m-1)}
 {(6m+1)(6m+5)(36m^2+48m+13)},\\
F_{4,1}(m)&=\eps_m\frac{6\sqrt3m}{36m^2+48m+13},\\[0.4ex]
F_{3,2}(m)&=\eps_m\frac{\sqrt3(108m^3+180m^2+66m-7)}
 {6(m+1)(3m+2)(36m^2+48m+13)},\\
F_{4,2}(m)&=\eps_m\frac{216m^3+252m^2-6m-49}
 {12(m+1)(3m+2)(36m^2+48m+13)},\\[0.4ex]
F_{3,3}(m)&=\eps_m\frac{6m-1}{12(m+1)(3m+2)},\\
F_{4,3}(m)&=-\eps_m\frac{\sqrt3}{6(m+1)(3m+2)},\\[0.4ex]
F_{3,4}(m)&=-\eps_m\frac{\sqrt3}{6(m+1)(3m+4)},\\
F_{4,4}(m)&=-\eps_m\frac{6m+7}{12(m+1)(3m+4)},\\[0.4ex]
F_{3,5}(m)&=-\eps_m\frac{216m^3+828m^2+1002m+391}
 {12(m+1)(3m+4)(36m^2+96m+61)},\\
F_{4,5}(m)&=-\eps_m\frac{\sqrt3(108m^3+360m^2+390m+139)}
 {6(m+1)(3m+4)(36m^2+96m+61)}.
\end{align*}
\endgroup

At the boundary block $m=0$, direct calculation gives
\begin{align}
 (F_3(0),\ldots,F_3(5))
 &=\left(0,\frac{12}{65},\frac{7\sqrt3}{156},\frac1{24},
          \frac{\sqrt3}{24},\frac{391}{2928}\right),\label{eq:F3-initial}\\
 (F_4(0),\ldots,F_4(5))
 &=\left(0,0,\frac{49}{312},\frac{\sqrt3}{12},
          \frac7{48},\frac{139\sqrt3}{1464}\right).
 \label{eq:F4-initial}
\end{align}

Squaring the table entries and simplifying gives
$B_m=-18P(m)/Q(m)$, where
\begin{align}
Q(m)={}&(3m+2)^2(3m+4)^2(6m+1)^2(6m+5)^2\notag\\
&\times(36m^2+24m+1)^2(36m^2+48m+13)^2
       (36m^2+96m+61)^2,\label{eq:Q-poly}
\end{align}
and
\begingroup\small
\begin{equation}\label{eq:P-poly}
\begin{aligned}
P(m)={}&2115832430592m^{17}+25037350428672m^{16}\\
&+135648368050176m^{15}+445363135598592m^{14}\\
&+988003952815104m^{13}+1562231786411520m^{12}\\
&+1807981351520256m^{11}+1545118573697280m^{10}\\
&+967871622362688m^9+431021958821280m^8\\
&+125478742327488m^7+17101374115824m^6\\
&-2736799083324m^5-1896379326126m^4\\
&-440148563712m^3-60242904189m^2\\
&-5688506082m-323586661.
\end{aligned}
\end{equation}
\endgroup
Finally, the numerator obtained after clearing denominators in
$B_m+1/(6m^3)$ is
\begingroup\small
\begin{equation}\label{eq:R-poly}
\begin{aligned}
R(m)={}&799784658763776m^{19}+10359115580178432m^{18}\\
&+62275295929614336m^{17}+230702433860136960m^{16}\\
&+589662544930707456m^{15}+1103159941756194816m^{14}\\
&+1563846250906377216m^{13}+1715574697139817216m^{12}\\
&+1474492477269211776m^{11}+998835839669371392m^{10}\\
&+533684659694031936m^9+223967804186097936m^8\\
&+73126009637189640m^7+18284181806875536m^6\\
&+3417662671887924m^5+460740759213393m^4\\
&+42451446056544m^3+2459278339956m^2\\
&+77909014560m+1006158400.
\end{aligned}
\end{equation}
\endgroup
Every coefficient in \eqref{eq:R-poly} is positive, which is the exact
certificate used in Lemma~\ref{lem:block-tail}.

\end{document}